\newcommand{\RR}{\mathbb{R}}
\newcommand{\HH}{\mathscr{H}}
\newcommand{\PP}{\mathscr{P}}
\newcommand{\ZZ}{\mathbb{Z}}
\newcommand{\norm}[1]{\ensuremath{\left\| #1\right\|}}
\newcommand{\vect}[1]{\boldsymbol{#1}}
\newcommand{\matr}[1]{\boldsymbol{#1}}
\newcommand{\E}{\mathrm{E}} %Expectation
\newcommand{\Cov}{\mathrm{Cov}} %Covariance
\newcommand{\Var}{\mathrm{Var}} %Covariance
\newcommand{\im}{\iota} %Im. number
\newcommand{\Ds}{\matr{\Delta(\varepsilon)}}
\newcommand{\bg}{\mathbf{g}}       
\newcommand{\N}{\mathcal{N}}       
\newcommand{\Ly}{\mathcal{L}_{\by}}       
\newcommand{\ba}{\bm{\alpha}}
\newcommand{\ff}{\bm{f}}
\newcommand{\bb}{\bm{\beta}}
\newcommand{\bga}{\bm{\gamma}}
\newcommand{\brho}{\bm{\rho}}
\newcommand{\by}{\bm{y}}
\newcommand{\va}{\vect{\alpha}}
\newcommand{\vb}{\vect{\beta}}
\newcommand{\vg}{\vect{\gamma}}
\newcommand{\vo}{\vect{\omega}}
\newcommand{\bM}{\matr{M}}
\newcommand{\bA}{\matr{A}}       
\newcommand{\bH}{\matr{H}}       
\newcommand{\bQ}{\matr{Q}}       
\newcommand{\bP}{\matr{P}}       
\newcommand{\bF}{\matr{F}}       
\newcommand{\bB}{\matr{B}}       
\newcommand{\bC}{\matr{C}}       
\newcommand{\bZ}{\matr{Z}}       
\newcommand{\bU}{\matr{U}}
\newcommand{\bI}{\matr{I}}       
\newcommand{\bV}{\matr{V}}       
\newcommand{\bW}{\matr{W}}       
\newcommand{\bK}{\matr{K}}       
\newcommand{\bKe}{\matr{K}_{\varepsilon}}       
\newcommand{\bR}{\matr{R}}       
\newcommand{\bD}{\matr{D}}       
\newcommand{\bS}{\matr{S}}       
\newcommand{\bL}{\matr{L}}
\newcommand{\R}{\mathbb{R}}       
\newcommand{\tbL}{\widetilde{\matr{L}}}
\newcommand{\tbU}{\widetilde{\matr{U}}}
\newcommand{\tbLam}{\widetilde{\bm{\Lambda}}}
\newcommand{\X}{\mathcal{X}}
\newcommand{\ft}{\tilde{f}}
\newcommand{\Y}{\mathcal{Y}}
\newcommand{\flatlim}{\varepsilon\rightarrow0}
\renewcommand{\O}{\mathcal{O}}       
\DeclareMathOperator{\mspan}{span}
\DeclareMathOperator{\orth}{orth}
\DeclareMathOperator{\argmin}{argmin}
\DeclareMathOperator{\diag}{diag}
\DeclareMathOperator{\Tr}{Tr}
\definecolor{darkgreen}{rgb}{0,0.6,0}
\newcommand{\NNP}[2]{\begin{pmatrix} #1 ; #2  \end{pmatrix}}
\newcommand{\SPM}[2]{\left(#1;#2 \right)}
\newcommand{\V}{\mathcal{V}}
\newcommand{\M}{\mathcal{M}}
\newcommand{\ones}{\vect{\mathbf{1}}}
\newcommand{\lt}{\tilde{\lambda}}
\newcommand{\gt}{\tilde{\gamma}}
\newcommand{\fX}{\vect{f}_{\X}}
\newcommand{\fXh}{\vect{\hat{f}}_{\X}}
\newcommand{\bMth}{\bM_{\vect{\theta}}}
\newcommand{\saddle}[2]{\begin{pmatrix} #1 & #2 \\ #2^\top & \vect{0} 
  \end{pmatrix}}
\newcommand{\bk}{\vect{k}}
\newcommand{\bx}{\vect{x}}
\newcommand{\dn}{\vect{\delta}_n}
\newcommand{\fleq}{\underset{\flatlim}{\propto}}
\DeclarePairedDelimiter\floor{\lfloor}{\rfloor}
\newtheorem{definition}{Definition}[section]
\newtheorem{theorem}[definition]{Theorem}
\newtheorem{proposition}[definition]{Proposition}
\newtheorem{lemma}[definition]{Lemma}
\newtheorem{corollary}[definition]{Corollary}
\newtheorem{remark}[definition]{Remark}
\newtheorem{example}{Example}[section]
\newcommand\numberthis{\addtocounter{equation}{1}\tag{\theequation}}
\newcommand{\Qort}{\matr{Q}_{\bot}}
\newcommand\ostwo{\frac{1}{\sqrt{2}}}
\newcommand{\mm}{\vect{m}}
\newcommand{\bv}{\vect{v}}
\begin{document}
\title{Gaussian Process Regression in the Flat Limit}
\author{Simon Barthelm\'e, Pierre-Olivier Amblard, Nicolas Tremblay, Konstantin
  Usevich}

\maketitle
  \begin{abstract}
  Gaussian process (GP) regression is a fundamental tool in Bayesian statistics. It is also known as kriging and is the Bayesian counterpart to the frequentist kernel ridge regression. Most of the theoretical work on GP regression has focused on a large-$n$ asymptotics, characterising the behaviour of GP regression as the amount of data increases. Fixed-sample analysis is much more difficult outside of simple cases, such as locations on a regular grid.

  In this work we perform a fixed-sample analysis that was first studied in the
  context of approximation theory by Driscoll \& Fornberg (2002), called the ``flat limit''. In flat-limit asymptotics, the goal is to characterise kernel methods as the length-scale of the kernel function tends to infinity, so that kernels appear flat over the range of the data. Surprisingly, this limit is well-defined, and displays interesting behaviour: Driscoll \& Fornberg showed that radial basis interpolation converges in the flat limit to polynomial interpolation, if the kernel is Gaussian. Subsequent work showed that this holds true in the multivariate setting as well, but that kernels other than the Gaussian may have (polyharmonic) splines as the limit interpolant.
  
  Leveraging recent results on the spectral behaviour of kernel matrices in the flat limit, we study the flat limit of Gaussian process regression. Results show that Gaussian process regression tends in the flat limit to (multivariate) polynomial regression, or (polyharmonic) spline regression, depending on the kernel. Importantly, this holds for both the predictive mean and the predictive variance, so that the posterior predictive distributions become equivalent.
  
  For the proof, we introduce the notion of prediction-equivalence of semi-parametric models, which lets us state flat-limit results in a compact and unified manner. Our results have practical consequences: for instance, they show that optimal GP predictions in the sense of leave-one-out loss may occur at very large length-scales, which would be invisible to current implementations because of numerical difficulties.
  \end{abstract}

Gaussian processes are a cornerstone of modern Bayesian methods, used almost
wherever one may require nonparametric priors. Quite naturally, the theory of
Gaussian Process methods is well-developed. Aside from limited special cases in
which Fourier analysis is applicable, GP-based methods have mostly been
studied under large-$n$ asymptotics (see, {\it e.g.}
\cite{stein1999interpolation,sollich2004using,rousseau2016frequentist,van2008rates,scheuerer2013interpolation}), which apply when
the number of measurements is high. In this paper we
report intriguing theoretical results obtained under a different asymptotic, one
that treats the data as fixed, rather than random, with fixed sample size. The
limit we look at is the so-called ``flat limit'', pioneered by
\cite{driscoll2002interpolation} in 2002. The flat limit consists in letting the
spatial width of the kernel function go to infinity, which results in the
covariance function becoming flat over the range of the data.

Studying Gaussian processes under the flat limit may seem at first sight to be
entirely pointless - does that not correspond to a prior that contains only flat
functions? The answer is no, because covariance functions have a second
hyperparameter that sets the vertical scale (pointwise variance). When one lets
pointwise variance grow as the covariance becomes wider, the actual function
space spanned by Gaussian processes remains interesting and useful. In the cases
studied here, they are (multivariate) polynomials and (polyharmonic) splines. 

A first hint that such would be the case was obtained in
\cite{driscoll2002interpolation}, where Driscoll \& Fornberg examined Radial
Basis Function interpolation in the flat limit, a popular method in
approximation theory that corresponds to noiseless GP regression. Driscoll \&
Fornberg found that under certain conditions, the RBF interpolant tends to the
Lagrange polynomial interpolant in the flat limit. The result is very
surprising, since the RBF interpolation problem may seem at first sight to
become ill-defined in the flat limit. Subsequent papers generalised this result
to multivariate interpolation \cite{larsson2005theoretical}, and finitely-smooth
kernels \cite{song2012multivariate}. Our contribution can be seen as an
extension of these results, since RBF interpolation features as a special case.

Further evidence that GPs may be interesting in the flat limit comes from the
study of the spectrum and eigenvectors of kernel matrices performed in
\cite{BarthelmeUsevich:KernelsFlatLimit}. The full story is
complicated, but the key phenomenon is that the eigenvectors of kernel
matrices tend in the flat limit to orthogonal multivariate polynomials or
polyharmonic spline bases. Based on these results, we have been able to study
the flat limit of determinantal point processes (DPPs), a type of point process
that is in some sense dual to Gaussian processes
\cite{barthelme2023determinantal}. 

To give some highlights, we show the following: 
\begin{enumerate}
\item GP regression tends in the flat limit to either polynomial regression or
  (polyharmonic) spline regression.
\item Which it is depends on the smoothness
  of the kernel, and on the amount of regularisation enforced by the prior
\item The specific kernel has only a minor influence on the limit, influencing
  only the part of the function space that is most heavily regularised.
\item There is nothing \emph{in theory} that prevents the optimal GP model
  (according to hyperparameter selection criteria) from being arbitrarily close
  to the flat limit. Such solutions are invisible in practice because of
  numerical issues, or because they are obscured by a nugget term.
\item In some cases, we show empirically that the flat limit is a good
  approximation for GP regression \emph{even when the actual kernel is far from flat}.
\end{enumerate}

Gaussian processes are used throughout machine learning and statistics, for many
other tasks than just regression. For instance, they are also used for
classification \cite{williams2006gaussian}, density estimation
\cite{murray2008gaussian}, certain numerical methods
\cite{cockayne2019bayesian}, as emulators in inverse problems
\cite{teckentrup2020convergence} and Bayesian optimisation (starting with
\cite{jones2001taxonomy}), etc. We show in the appendices that our results can
be extended to the general setting of so-called ``latent Gaussian models'', with
non-Gaussian likelihoods and observations that are arbitrary linear functionals.
%We offer a numerical illustration in section \ref{sec:non-gaussian}, but in the
%main text we concern ourselves with regression models for simplicity. 
The
message is the same: a GP prior in a statistical model turns (effectively) in
the flat limit into a polynomial or a spline.

There are some important practical consequences of our results, that we discuss
in detail in section \ref{sec:deg-freedom-flatlim}. In particular, if the data
 presents strong polynomial trends, then caution should be observed when
performing hyperparameter selection. Overall our results argue in favour of
using polyharmonic (eg., thin plate,
\cite{duchon1977splines,meinguet1979multivariate}) splines as priors, at least
for functions in spaces of dimension $\leq 3$. These splines occur as flat
limits of GP models, eliminating a bothersome hyperparameter, and efficient
tools are available (for instance the highly-popular R package \emph{mgcv},
\cite{wood2006generalized}). 

We shall now introduce our results informally, by way of a few pictures. Fig.
\ref{fig:illus-fits} shows a synthetic dataset, fitted using various methods. A
very classical way of fitting such data is to use polynomials, which results in
the curves on panel (a). Another classical way is to use smoothing splines,
which results in panel (c), where the different curves correspond to different
values of the regression parameter. A more modern way of producing a fit is to
use a Gaussian process, which results in a Bayesian version of Radial Basis
Function interpolation \cite{scheuerer2013interpolation}. Gaussian process
regression requires a covariance function, which determines the behaviour of the
fit. The ever-popular Gaussian or ``squared-exponential'' covariance function is
\begin{equation}
  \label{eq:gaussian-kernel}
  \kappa_{\varepsilon}(\vect{x},\vect{y}) = \gamma \exp \left( - \varepsilon^2 \norm{\vect{x}-\vect{y}}^2 \right)
\end{equation}
where $\vect{x}$ and $\vect{y}$ are two points in $\R^d$. In this definition $\gamma$ and $\varepsilon$ are hyperparameters. $\varepsilon$
sets the horizontal scale (the width of the Gaussian), and $\gamma$ sets the
vertical scale (its height). For a fixed value of $\varepsilon$, changing the
value of $\gamma$ produces different fits. Increasing $\gamma$ increases the
amount of variation allowed in the fitted function (the \emph{effective degrees of freedom}), resulting in a tighter fit to the data. Decreasing $\gamma$
decreases the degrees of freedom, as $\gamma \rightarrow 0$ the fit goes to a
horizontal line at 0. Panel (b) shows the fits for a few different values of
$\gamma$, for a fixed value of $\varepsilon=0.5$. Panel (d) is the same, but for
a different covariance function, specifically one in the Matérn family (see eq.~\eqref{eq:matern-kernel} for the definition), which
has the property of being only once differentiable at 0. For reasons that cannot
be succintly explained, differentiability of the kernel function plays a large
role -- see \cite{BarthelmeUsevich:KernelsFlatLimit}. 

\begin{figure}
  \centering
  \includegraphics[width=14cm]{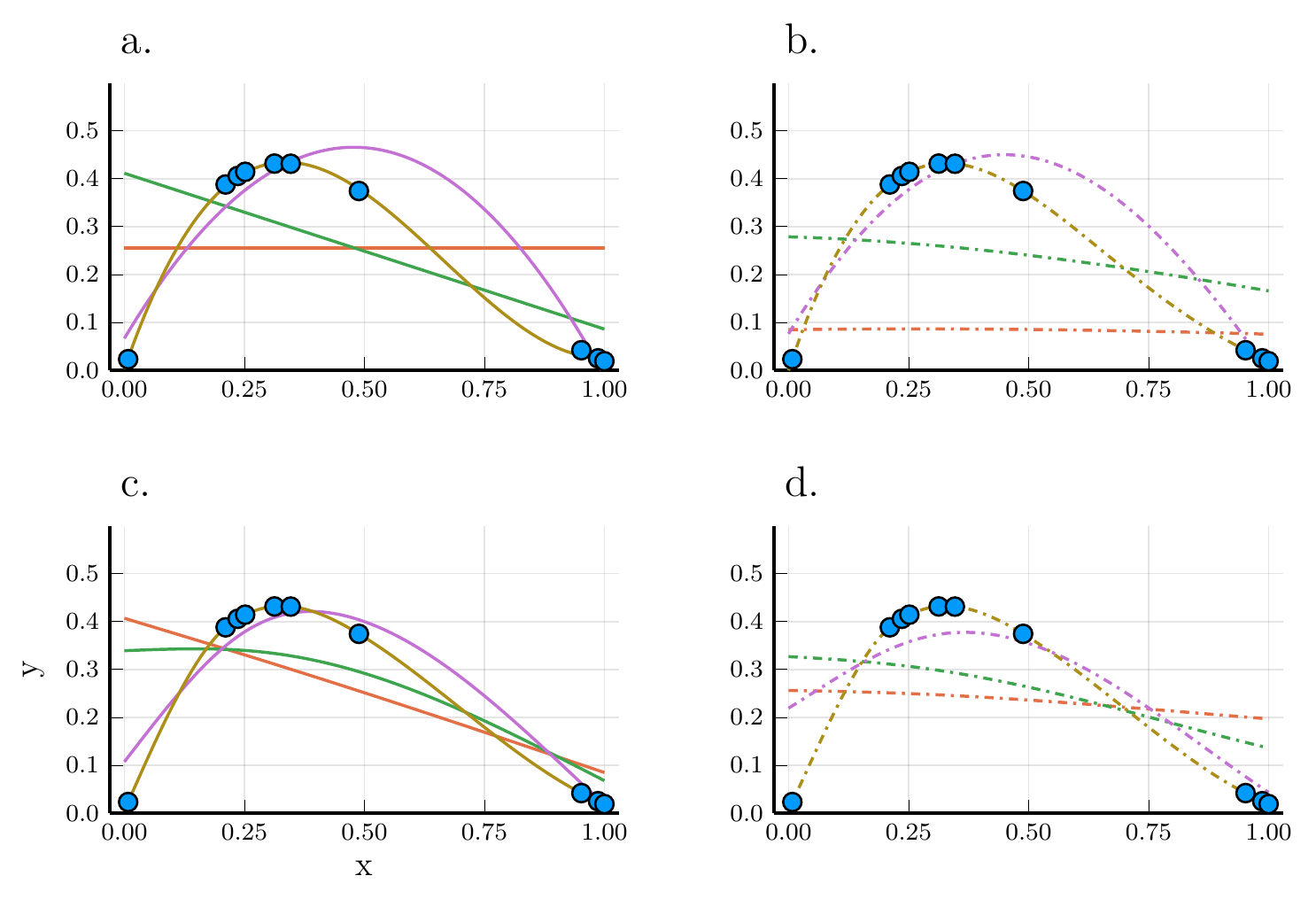}
  \caption{Various fits of the same data. a. Polynomials of degree 0 to 3. b.
    Gaussian process regression, with a Gaussian kernel, $\varepsilon=0.5$ and
    different values of $\gamma$ (see text). c. Quadratic smoothing splines. d.
    GP regression, with a Matérn kernel (see text). 
  }
  \label{fig:illus-fits}
\end{figure}

Our analysis consists in letting $\flatlim$, 
making the covariance functions ``flat'' over the range of the data. One outcome
is that in the flat limit, to put things very roughly, panel (a) $\rightarrow$ panel
(b) and panel(c) $\rightarrow$ panel (d). 

The next set of figures should explain
this a bit better. For a given value of $\varepsilon$ and $\gamma$, the fit
produced by a Gaussian process is a function from (in this case) $[0,1]$ to
$\R$. Call it $\hat{f}_{\varepsilon,\gamma}(x)$. If we leave $\varepsilon$ fixed
and vary $\gamma$, we obtain a family of functions $\mathcal{F}_\epsilon =
\left\{  \hat{f}_{\varepsilon,\gamma} \vert \gamma \in \R^+  \right\}$. Panel
(b) of fig.\ref{fig:illus-fits} shows a few elements from $\mathcal{F}_\epsilon$ for the Gaussian kernel.
A polynomial fit is another
function from $[0,1]$ to $\R$, this time parametrised by the degree of the
polynomial. Call $\hat{p}_r(x)$ the polynomial fit of degree $r$. An implication
of our results (theorem \ref{thm:equivalent-kernels-1d}) is that as $\flatlim$
the set $\mathcal{F}_\epsilon$ intersects (goes through) the polynomial fits.
This is best understood graphically. We cannot plot $\mathcal{F}_\epsilon$, but
we can plot the following: we choose two locations on the x-axis (at $x_a=0.2$ and
$x_b=0.8$), and plot the value of the fit at these locations. For fixed $\varepsilon$, we can think of the pair
$(\hat{f}_{\varepsilon,\gamma}(x_a),\hat{f}_{\varepsilon,\gamma}(x_b))$ as a
parametric curve in $\R^2$, parameterised by $\gamma$. The curves on fig. \ref{fig:pred-curve-gaussian} are
two such parametric curves, for the Gaussian kernel and two different values of
$\varepsilon$. The predictions of the polynomial fits at $x_a$ and $x_b$ are
just a set of points in $\R^2$. What theorem \ref{thm:equivalent-kernels-1d}
implies is that as $\flatlim$, the parametric curves will go through each
polynomial fit (and more than that, interpolate linearly between these points).

For the Matérn kernel, following again theorem \ref{thm:equivalent-kernels-1d},
the comparison should be to another parametric curve, corresponding to the
smoothing splines for all possible values of the regularisation parameter.
Barring very high values of $\gamma$ (for which the behaviour of the GP becomes
polynomial), the GP fit should behave like a spline and therefore tend to the
parametric curve produced by the smoothing splines. That is exactly the
behaviour observed on fig. \ref{fig:pred-curve-matern}. 

Theorem \ref{thm:equivalent-kernels-1d} is actually a bit more informative than
what we have just shown, since it deals for instance with the predictive
variance as $\flatlim$. Theorem \ref{thm:equivalent-kernels-nd} generalises the
result to fits in $\R^d$ with $d>1$. It is stated abstractly in terms of
asymptotically equivalent models, but figs. \ref{fig:pred-curve-gaussian} and
\ref{fig:pred-curve-matern} are useful to keep in mind to visualise what happens
to GP fits as $\flatlim$. 

\begin{figure}
  \centering
  \includegraphics[width=14cm]{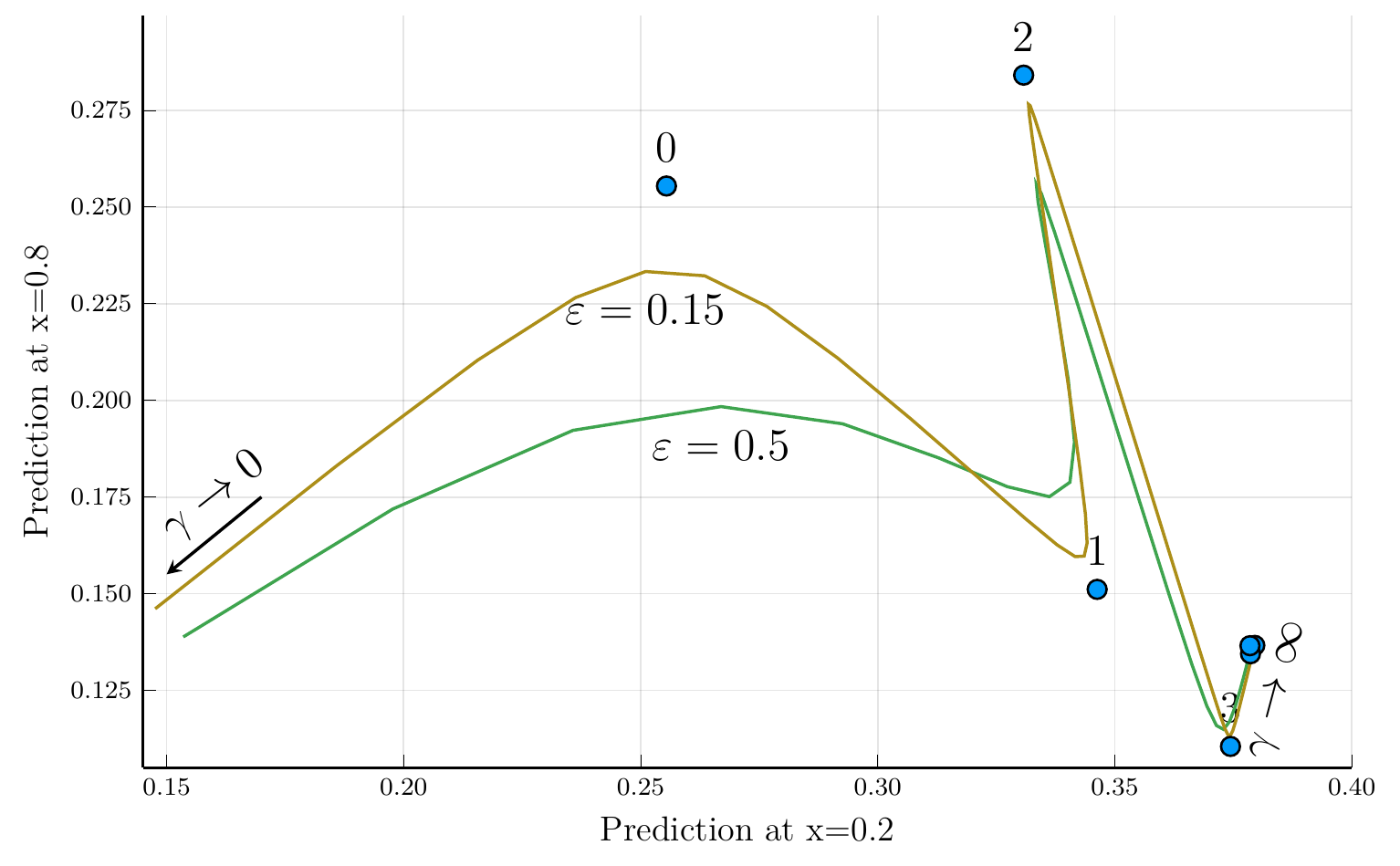}
  \caption{Predictions at $x=0.2$ and $x=0.8$ from different models. The
    numbered dots correspond to the polynomial fits with degree $0$ to $6$. The
    continuous curves are the predictions from GP regression with Gaussian
    kernel, fixing $\varepsilon$ but letting $\gamma$ vary. We show the two
    limits $\gamma \to 0$ and $\gamma \to \infty$, which correspond respectively
    to a maximally penalised fit and to a minimally constrained one (an interpolation).
    The two individual
    curves are for two different values of $\varepsilon$. Theorem
    \ref{thm:equivalent-kernels-1d} states that as $\flatlim$ the continuous
    curve goes through the blue dots in a piecewise linear manner. 
  }
  \label{fig:pred-curve-gaussian}
\end{figure}

\begin{figure}
  \centering
  \includegraphics[width=14cm]{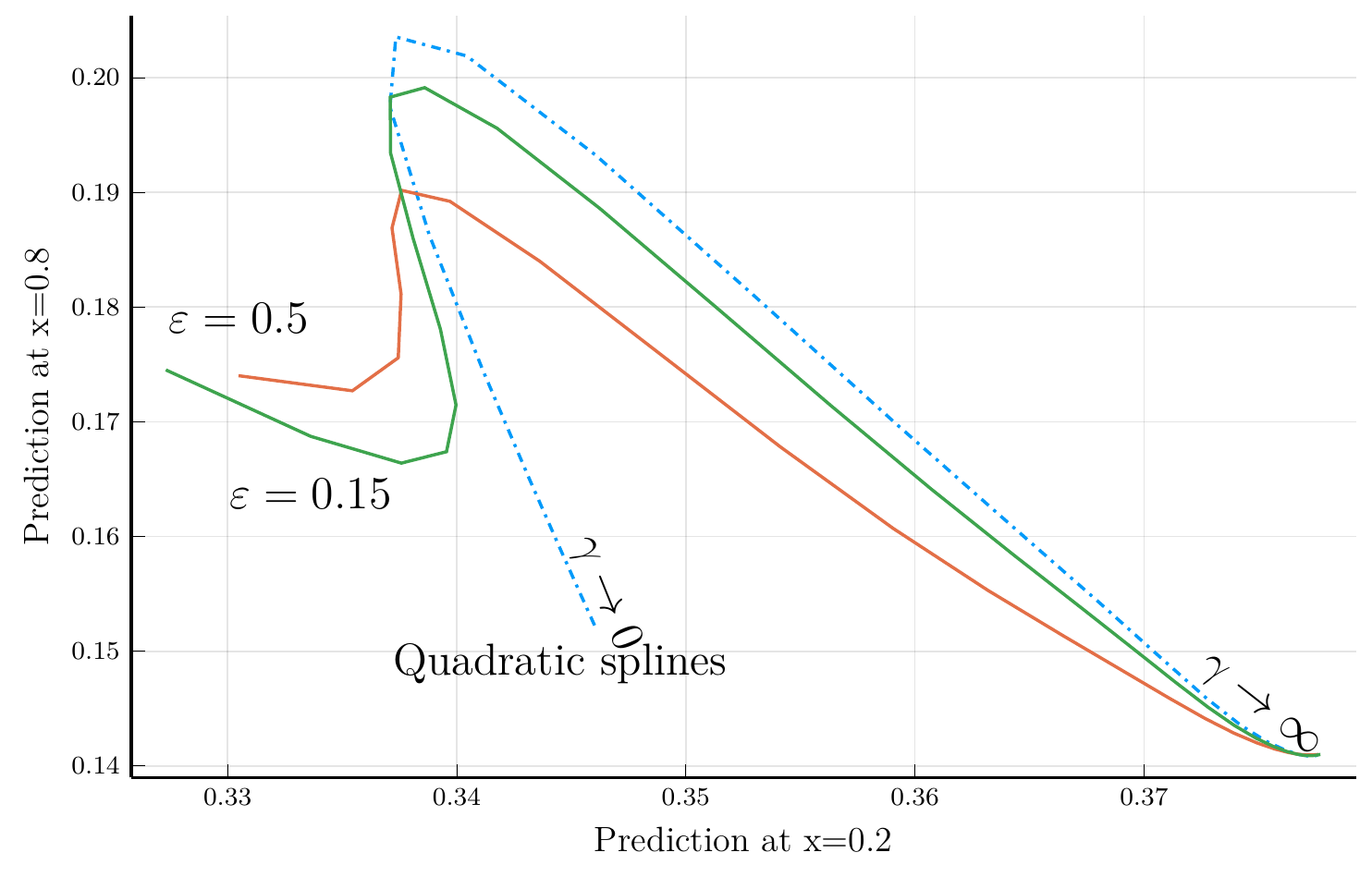}
  \caption{Predictions at $x=0.2$ and $x=0.8$ from different models. The
    dashed line corresponds to smoothing spline fits for all possible values of
    the regularisation parameter. The
    continuous curves are the predictions from GP regression with a once-differentiable Matérn
    kernel, fixing $\varepsilon$ but letting $\gamma$ vary. The two individual
    curves are for two different values of $\varepsilon$. Theorem
    \ref{thm:equivalent-kernels-1d} states that as $\varepsilon$ the continuous
    curve tends to the dashed curve. 
  }
  \label{fig:pred-curve-matern}
\end{figure}

\subsubsection*{Organisation of the paper}

Section \ref{sec:background} introduces GP regression, the main formulas and
notation. Taking GPs to the flat limit produces improper, \emph{semi-parametric}
GP models which have a penalised and an unpenalised part. Section
\ref{sec:semi-parametric} introduces some notation and useful facts on
semi-parametric models for GP regression. Section
\ref{sec:prediction-equivalence} sets the main theoretical framework, where we
develop an equivalence relation on semi-parametric models from the point of view
of prediction-equivalence. Roughly, two models are prediction-equivalent if they lead to the same predictive
distributions. Subsequent results are stated in terms of prediction equivalence.

Section \ref{sec:results} contains our core results on limits of GP regressions in the univariate case,
which are generalised in section \ref{sec:multivariate} to the multivariate case. Section \ref{sec:deg-freedom-flatlim}
contains additional results on hyperparameter selection and degrees of freedom.
Section \ref{sec:towards-practical} shows how to construct \emph{matched flat-limit approximations} to GP
regressions, and includes numerical results. The discussion in section
\ref{sec:conclusion} highlights some limitations and directions for future work. 
The appendices contain some deferred proofs, various results on Wronskian
matrices, and the outline of two proofs extending our results to general linear
observations and non-Gaussian likelihoods.

\section{Background on GP regression and related methods}
\label{sec:background}

Gaussian processes are used in a variety of statistical models, but the simplest
and most elegant is GP regression, also known as kriging. GP regression is a Bayesian procedure for
non-parametric regression, in which we assume that a function $f(x)$ has been
measured (with noise) at $n$ locations $x_1,\ldots,x_n$ and the goal is to infer
$f$ given these measurements and some vague prior knowledge, for instance that
$f$ is smooth. The procedure is called non-parametric because we do not assume
that $f$ has some parametric form. Instead, a Gaussian process prior is used to
capture some basic prior knowledge about $f$, for instance its smoothness or its
periodicity. 

A Gaussian process is a random function $f(x) : \Omega \rightarrow \R$ that has multivariate Gaussian
marginals. For simplicity we take $\Omega \subset \R$ in this introduction, but
higher dimensions are dealt with further down. We note $f \sim GP(\mu,\kappa)$,
where $\mu(x)  : \Omega \rightarrow \R$ is a mean function and $\kappa(x,y)  : \Omega^2 \rightarrow \R$ is a covariance function, if
for all finite subsets $\X = \{x_1 \ldots x_n \}$ of $\Omega$ , the random
vector $\vect{f}_\X = \left[ f(x_1) \ldots f(x_n) \right]^\top$ has a multivariate Gaussian
distribution, specifically $\vect{f}_\X \sim N(\vect{\mu}_\X,\matr{K}_\X)$, where
$\mu_\X= \left[ \mu(x_1) \ldots \mu(x_n) \right]^\top$ and $\matr{K}_\X =
[\kappa(x_i,x_j)]_{i,j=1}^n$. In most cases, the covariance $\kappa(x,y)$ is a
decreasing function of the distance between $x$ and $y$, which ensures that
$f(x_i)$ and $f(x_j)$ have similar values if $x_i$ is close to $x_j$ and so that
the random function is smooth (the smoothness order of $f$ is in fact a function of
the smoothness order of $\kappa$).
In GP regression the GP plays the role of a prior. The assumption is that $f \sim GP(0,\kappa)$
and in addition that the measurements are i.i.d. and Gaussian, namely
\begin{equation}
  \label{eq:likelihood}
  y_i \sim N(f(x_i),\sigma^2)    
\end{equation}
for $i$ in $1, \ldots, n$\footnote{A different derivation of the same estimator
  is used in the kriging literature, via minimum variance arguments \cite{scheuerer2013interpolation}. Note that
  here we are just stating the usual assumptions used to derive classical GP
  regression. In our analysis we make no assumptions on the true function $f$ or
  the distribution of the measurements $\by$. We describe what happens to the GP
  estimator for a given, fixed dataset.   }. Then the posterior distribution $f
| \vect{y}$ is also a Gaussian process. This can be verified by writing the
joint distribution of $\vect{y}$ and $\vect{f}_{\X'}$ for any finite set $\X' \subset \Omega$. By hypothesis,
that joint distribution is a multivariate Gaussian, and so the (posterior) conditional
$\vect{f}_{\X'}|\vect{y}$ is Gaussian as well. The posterior mean and covariance functions
can be easily derived by applying the usual Gaussian conditioning formulas, and
read:
\begin{equation}
  \label{eq:posterior-mean}
  \E(f(x)|\vect{y}) = \vect{k}_{x,\X}\left( \matr{K}_\X + \sigma^2 \matr{I} \right)^{-1} \by 
\end{equation}
with $\vect{k}_{x,\X}= [\kappa(x,x_1) \ldots \kappa(x,x_n)]$, and
\begin{equation}
  \label{eq:posterior-mean}
  \Cov(f(x),f(x') | \vect{y}) = \kappa(x,x') - \vect{k}_{x,\X}\left( \matr{K}_\X + \sigma^2 \matr{I}
  \right)^{-1}\vect{k}_{\X,x'}
\end{equation}

The posterior expectation, $\ft(x) = \E(f(x)|\vect{y})$ is naturally used as an estimator for
$f(x)$. Two remarks are in order:
\begin{enumerate}
\item when the hyperparameters are fixed, $\ft$ is a linear function of
  $\vect{y}$, which makes GP regression a member of the family of \emph{linear
    smoothers} studied by, {\it e.g.}, \cite{buja1989linear} \footnote{When the
    hyperparameters are themselves selected based on the data, the final fit is
    generally a non-linear function of $\vect{y}$, however.}.
\item $\ft$ can be written as $\ft(x) = \sum
  \kappa(x,x_i) \alpha_i$, so that $\ft$ belongs to the reproducing kernel
  Hilbert space generated by $\kappa$, which relates GP regression to classical
  kernel methods \cite{scholkopf2002learning}
\end{enumerate}
In fact, most of the results given below apply (with appropriate modifications) to related methods like kernel
ridge regression or support vector regression.

%%%%%%%%%%%%%%%%%%%%%%%%%%%%%%%%%%%%%%%%
\subsection{Covariance functions, and the problem of hyperparameters}
\label{sec:hyperparameters}

So far, we have not defined our covariance function. We shall focus on
radial-basis kernel functions, meaning that $\kappa(\vect{x},\vect{y}) =
\gamma \psi(\norm{\vect{x}-\vect{y}}_2)$ for
some function $\psi$, {\it i.e.} the covariance only depends on the (Euclidean) distance between
$\vect{x}$ and $\vect{y}$. For $\kappa$ to be a valid covariance function, it
needs to be positive definite, and for stationary covariance functions this is equivalent (by
Bochner's theorem) to requiring that $\psi$ be the Fourier transform of a
non-negative measure. 

The prototypical choice in 
machine learning is to use the squared-exponential (also known as Gaussian) covariance
function, eq. \eqref{eq:gaussian-kernel}.
In this formulation $\varepsilon$ acts like an inverse \emph{horizontal} scale
(an inverse bandwidth) while $\gamma$ acts like an inverse \emph{vertical} scale
(a gain parameter). These parameters are usually unknown and must be estimated
from the data, using one of the methods outlined below in section \ref{sec:hyperpar-selec}. In
addition, the noise variance $\sigma^2$ (see eq. \eqref{eq:likelihood}) may not be known either, in which case it needs to
be estimated too, bringing the number of hyperparameters to three: $\varepsilon,\gamma,\sigma^2$.

The goal here is to characterise the flat limit of GP regression, which is the
regime where $\flatlim$. This essentially fixes $\varepsilon$ and leaves only
two hyperparameters to be estimated. We describe later (section \ref{sec:deg-freedom-flatlim}) how to understand this
limit in the context of hyperparameter selection. 

While in machine learning the squared-exponential kernel is the most popular, in spatial
statistics the Matérn class of kernels \cite{stein1999interpolation} is very often preferred. These kernels feature an
additional hyperparameter $\nu \in \R^+$ which determines regularity, and have a somewhat
unwieldy expression (\cite{williams2006gaussian}, p. 83): 
\begin{equation}
  \label{eq:matern-kernel}
  \kappa_{\varepsilon}(\vect{x},\vect{y}) = \gamma \frac{2^{1-\nu}}{\Gamma(\nu)} \left(\sqrt{2\nu}(\varepsilon \norm{\vect{x}-\vect{y}})\right)^{\nu} K_\nu  \left(\sqrt{2\nu}(\varepsilon \norm{\vect{x}-\vect{y}})\right)
\end{equation}
where $K_\nu$ is a modified Bessel function. The expression simplifies when
$2\nu$ is an integer. For instance, with $\nu=1/2$ we obtain the exponential
kernel:
\begin{equation}
  \label{eq:exp-kernel}
  \kappa_{\varepsilon}(\vect{x},\vect{y}) = \gamma \exp \left( -\varepsilon \norm{\vect{x}-\vect{y}} \right)
\end{equation}
The value of $\nu$ determines the regularity of $f$ in the sense that if $f$ is
drawn from a Matérn kernel with parameter $\nu$, $f$ is $\lfloor \nu \rfloor $
times mean-square (m.s.) differentiable. This implies for instance that if $f$ is drawn
from a exponential kernel it is continuous but nowhere differentiable. 

More generally, and beyond Matérn kernels, the regularity of $f$ is determined
by the differentiability of the covariance $\kappa(x,y)$ in both variables.
Heuristically, it is easy to see that $f$ is m.s. differentiable if and
only if $\kappa(x,y)$ is differentiable in both $x$ and $y$. M.s.
differentiability is equivalent to requiring that
$\frac{f(x+\delta)-f(x)}{\delta}$ has finite variance in the limit $\delta
\rightarrow 0$. By writing the covariance of $f(x+\delta)$ and $f(x)$ one can
check that $\Var(\frac{f(x+\delta)-f(x)}{\delta}) =
\delta^{-2}(\kappa(x+\delta,x+\delta)+\kappa(x,x)- 2 \kappa (x+\delta,x))$ which has a finite limit if and only
if $\kappa(x,y)$ is differentiable in both variables at $y=x$. 
Repeating the argument we see that m.s. differentiability of order $s$ requires
that the kernel be $s$ times differentiable in both variables at $y=x$.

The regularity of the kernel essentially determines its flat limit behaviour. We use the following definition:
\begin{definition}[Regularity parameter]
  We say a kernel $k(\vect{x},\vect{y})$ has regularity parameter $r$ if it is
  $(r-1)$-times differentiable in both $\vect{x}$ and $\vect{y}$ at $\vect{x}=0,\vect{y}=0$, but not $r$-times differentiable.
\end{definition}
\begin{example}
  The exponential kernel (eq. \eqref{eq:exp-kernel}) is not differentiable at
  $y=x$, because the distance function is not.  It therefore has regularity
  parameter $r=1$.
  In dimension $d=1$, one can check directly that the Matérn kernel
  \[ k(x,y) = (1+|x-y|)\exp(|x-y|)\]
  is once-differentiable and therefore has $r=2$. 
  The Gaussian kernel is infinitely differentiable in both variables, and
  therefore has $r=\infty$. 
\end{example}

For stationary kernels, the regularity parameter is easy to determine based on
the power spectral density, see \cite{stein1999interpolation} or appendix
\ref{sec:wronskian-spectral}. 

\subsection{Other linear smoothers}
\label{sec:linear-smoothers}

In this section we introduce other linear smoothers which are related to GP
regression in the flat limit. The first is polynomial regression, which is very
simple when $d=1$ (the multivariate case introduces some complexity, dealt with
in section \ref{sec:multivariate}). Here we assume $f$ is a polynomial of degree $s$, noted $f \in
\PP_s$, {\it i.e.} $f(x) = \sum_{i=0}^s \alpha_i x^i$, with $\vect{\alpha} \in \R^{s+1}$. $f$ is estimated by maximum
likelihood, {\it i.e.} via least-squares:

\begin{equation}
  \label{eq:polyreg}
  \hat{f}_s(.) = \argmin_{f \in \PP_s} \sum (y_i - f(x_i))^2 = \vect{v}_{\leq s}(.)^\top (\bV_{\leq s}^\top \bV_{\leq s})^{-1} \bV_{\leq s}^\top \vect{y}
\end{equation}
where $\vect{v}_{\leq s}(x) = [x^0\  x^1\ \ldots \ x^{s}]^\top $ is the
column vector in  $\R^{s+1}$ that contains the first $s$ order monomials at $x$, and  where the Vandermonde matrix 
$ \bV_{\leq s} = [ \vect{v}_{\leq s}(x_1), \ldots,  \vect{v}_{\leq
  s}(x_n)]^\top$ of $\R^{n \times (s+1)}$ collects these vectors at the
locations $x_1,\ldots,x_n$. This matrix and the space it spans are   fundamental
objects in the paper. Eq. \eqref{eq:polyreg} shows that $\hat{f}_s$ depends
linearly on $\vect{y}$, so that polynomial regression is a linear smoother.

The only hyperparameter in this case is the degree $s$. The polynomial
regression fit can also be thought of as a Bayesian {\it a posteriori}
estimate, specifically $\E(f|\vect{y})$ under a (improper), flat prior over the
coefficients $\vect{\alpha}$. In this case the posterior variance equals:
\begin{equation}
  \label{eq:polyreg-variance}
  \Var(f|\vect{y}) = \sigma^2 \vect{v}_{\leq s}(x)^\top (\bV_{\leq s}^\top \bV_{\leq s})^{-1} \vect{v}_{\leq s}(x).
\end{equation}

A different and very popular family of smoothers are the smoothing splines,
which generalise to the polyharmonic splines when $d>1$. Here $f$ is only assumed to
be $p$-times differentiable, and estimated using penalised maximum likelihood.
The penalty equals the energy of the $p$-th derivative of $f$:
\begin{equation}
  \label{eq:smoothing-splines}
  \hat{f} = \argmin_{f \in C_p(\Omega)} \sum (y_i - f(x_i))^2  + \eta \int_{\Omega} (f^{(p)}(x))^2 dx
\end{equation}
where the optimisation is over $C_p(\Omega)$, the space of $p$-times
differentiable functions on $\Omega$.

An important feature of the $\int_{\Omega} (f^{(p)}(x))^2 dx$ regulariser is
that it has a null space, since any polynomial of degree $(p-1)$ has zero
penalty.

A famous result known as the ``representer theorem'' (\cite{wahba1990spline,scholkopf2002learning}) states that this
variational optimisation problem collapses to a finite-dimensional optimisation
problem: the solution $\hat{f}$ belongs to a finite dimensional space of
functions, the splines of order $p$ with knots at $x_1 \ldots x_n$. The argument
is quite simple \cite{scholkopf2002learning}. Without using the RKHS formalism, it can be sketched as follows: the
error $\sum (y_i - f(x_i))^2$ is indifferent to the values of $f$ outside of the
measurements $\X$, so we need to look for the function that minimises the
penalty given certain values $\fX$. The solution turns out to be a
spline of order $p$ with knots at $\X$, and so the solution of the overall
optimisation problem is just to find the optimal such spline. 

Classical results on splines \cite{duchon1977splines} show that a basis for this space is given by
functions of the form:
\begin{equation}
  \label{eq:spline-space}
  g(x) = \sum_{i=1}^n |x-x_i|^{2p-1}\alpha_i + \sum_{j=0}^{p-1} x^j \beta_j.
\end{equation}
Here $g$ is a sum of a piecewise polynomial term and a polynomial term. Note that the latter spans the null space of the regulariser. This
form has $n+p$ degrees of freedom, but the regularisation term imposes
$\bV_{<p}^\top \vect{\alpha} = 0$, which removes $p$ degrees of freedom. 

We can inject eq. \eqref{eq:spline-space} into the smoothing splines
optimisation problem (eq. \eqref{eq:smoothing-splines}) to turn into a finite
dimensional problem over $\vect{\alpha}$ and $\vect{\beta}$. Some calculus shows
that the problem is equivalent to inverting the following ``saddle-point''
system:
\begin{equation}
  \label{eq:saddlepoint-regularised}
  \begin{pmatrix}
    (-1)^p \bD^{(2p-1)} + \eta\bI & \bV_{< p} \\
    \bV_{< p}^\top & \matr{0} 
  \end{pmatrix}
  \begin{pmatrix}
    \vect{\alpha} \\
    \vect{\beta}
  \end{pmatrix}
  =
  \begin{pmatrix}
    \vect{y} \\
    \vect{0}
  \end{pmatrix}
\end{equation}
where $\bD^{(2p-1)}$ is a symmetric matrix with entries $\bD^{(2p-1)}_{ij}=|x_i-x_j|^{2p-1}$.

Recall that $\eta$ is a regularisation parameter: the smaller $\eta$, the closer
$f$ must fit the data. In the $\eta \rightarrow 0$ limit, regularisation turns
into interpolation, and the system above turns into:
\begin{equation}
  \label{eq:saddlepoint-interp}
  \begin{pmatrix}
    (-1)^p \bD^{(2p-1)} & \bV_{< p} \\
    \bV_{< p}^\top & \matr{0} 
  \end{pmatrix}
  \begin{pmatrix}
    \vect{\alpha} \\
    \vect{\beta}
  \end{pmatrix}
  =
  \begin{pmatrix}
    \vect{y} \\
    \vect{0}
  \end{pmatrix}
\end{equation}
which is the classical system for polyharmonic spline interpolation \cite{BerlTA2004,wendland2004scattered}. On
the other hand, letting $\eta$ go to $\infty$ in the optimisation problem (eq.
\eqref{eq:smoothing-splines}) effectively imposes that the solution belongs to
the null space of the regulariser, {\it i.e.} the space of polynomials of degree
up to $p-1$. In this limit we therefore recover polynomial regression. The same
sort of relationships are present in the flat limit of GP regression. The limit
is sometimes a spline, sometimes a polynomial, sometimes a regression and
sometimes an interpolant. Exactly what happens depends on $n$, on the regularity
of the kernel, and on how much regularisation is applied. 

\subsection{Degrees of freedom of a linear smoother}
\label{sec:deg-freedom}

The notion of \emph{effective degrees of freedom} is important in the analysis of linear
smoothers (see \cite{buja1989linear}). If $\fXh= \bM \vect{y}$, where $\bM$ is
the smoother matrix, the number of effective
degrees of freedom is simply defined as $\Tr \bM$. For instance, if $\bM$ is a
projection (as in the case of polynomial regression), then $\Tr \bM$ is just the
dimension of the space $\vect{y}$ is projected to (the image space). For regularised
regressions, the matrix $\bM$ is not a projection but the eigenvalues are in
$[0,1]$, and summing these eigenvalues, which is what the trace does, is a
natural way of defining a ``dimension'' for the image space. 

In the case of polynomial regression of degree $p$, the number of degrees of
freedom of the smoother simply equals $p+1$, the dimension of the space of
polynomials of degree $p$. In the case of GP smoothers, the number of degrees of freedom equals  (from eq.
\eqref{eq:posterior-mean}):
\begin{equation}
  \label{eq:df-gp}
  \Tr\left(\bK(\bK+\sigma^2\bI)^{-1}\right) = \sum_{i=1}^n \frac{\lambda_i}{\lambda_i + \sigma^2}
\end{equation}
where the $\lambda_i$'s are the eigenvalues of $\bK$. On the
right-hand side, each term in the sum is between 0 and 1. If $\lambda_i$ is much
larger than $\sigma^2$, then the term is close to 1. If $\lambda_i$ is much
smaller than $\sigma^2$, then the term is close to 0. If there is an index $j$
such that $\lambda_j \gg \lambda_{j+1}$, and $\lambda_{j} > \sigma^2 >
\lambda_{j+1}$, then the smoother matrix is close to a projection matrix. Such a
scenario arises in the flat limit. 

The number of  degrees of freedom of (polyharmonic) spline interpolants are a
bit more intricate to work out, because the smoothing matrix does not take a
very convenient form. The result can be obtained either by a brute-force
calculation or by noticing that the problem is the same as computing the
expected size of an extended L-ensemble \cite[Eq. (17)]{tremblay2021extended}, yielding
the following figure:
\begin{equation}
  \label{eq:df-spline}
  \Tr(\bM) = p+\sum_{i=1}^{n-p} \frac{\lambda_i}{\lambda_i+\eta}
\end{equation}
where here the $\lambda_i$'s are the eigenvalues of the matrix $(-1)^r \Qort^\top
\bD^{(2p-1)}\Qort$,  $\Qort$ being an orthonormal basis of the orthogonal of $\mspan  \bV_{< p}$. 
Recall that when $\eta$ goes to 0 regression turns into
interpolation, so that $\bM\vect{y}=\vect{y}$. One can verify from eq.
\eqref{eq:df-spline} that the number of degrees of freedom indeed goes to $n$. In the other
limit, when $\eta \rightarrow \infty$ we perform a polynomial regression of degree
$p-1$, and accordingly the number of degrees of freedom goes to $p$.

\subsection{Hyperparameter selection}
\label{sec:hyperpar-selec}

There are several methods available for hyperparameter selection in GP
regression. The most satisfactory is certainly to avoid hyperparameter selection entirely by
computing the marginal posterior expectation (integrating over the
hyperparameters). This is not tractable analytically and somewhat expensive in
practice, so alternative methods are often preferred. 
Let us set up more appropriate notation. The vector of hyperparameters is  $\vect{\theta}
= (\gamma,\epsilon,\sigma^2)$ if $\sigma^2$ is unknown, and $\vect{\theta}
= (\gamma,\epsilon)$ if $\sigma^2$ is considered known.

A method for hyperparameter selection popularised by
\cite{mackay1999comparison}, but equivalent to a form of Empirical Bayes
\cite{robbins1956empirical}, is to set $\vect{\theta}$ to its maximum-likelihood
value. The probability of the observations $\by$ given the hyperparameters
(marginalising over $\vect{f}$) is:
\begin{equation}
  \label{eq:marginal-likelihood}
  p(\by | \vect{\theta}) = \det \left( 2\pi (\bK_{\vect{\theta}}+\sigma^2\bI) \right)^{-1/2}
  \exp(-\frac{1}{2} \by^\top(\bK_{\vect{\theta}}+\sigma^2\bI)^{-1/2} \by )
\end{equation}
The maximum likelihood estimate of $\vect{\theta}$ is obtained by maximising eq. \eqref{eq:marginal-likelihood}. For our purposes here it is not very
fruitful to compute the asymptotics of eq. \eqref{eq:marginal-likelihood} in the
flat limit because it is divergent, as the prior becomes improper. We therefore
focus our efforts on other selection criteria which are not divergent, as we
show in section \ref{sec:deg-freedom-flatlim}. 

The non-divergent criteria we focus on in this paper are also very popular, and consist of
\begin{enumerate}
	\item \texttt{LOO-MSE}:  leave-one-out cross-validation with a squared-loss
	\item \texttt{LOO-NLL}: leave-one-out cross-validation with a negative log-likelihood
	\item \texttt{SURE}: Stein's Unbiased Risk Estimator. 
\end{enumerate} 
We believe AIC \cite{akaike1974new} and Generalised Cross-Validation \cite{golub1979generalized} should show qualitatively the same behaviour, but we do not study them here. 

For all these three criteria, the smoother matrix plays a central role. Recall that
the smoother matrix $\bMth$ is defined {\it via} the posterior expectation at the
sampled locations which equals :
\begin{equation}
  \label{eq:smoother-matrix-gp}
  \E(\fX | \vect{y}, \vect{\theta}) = \gamma\bK_\varepsilon \left( \gamma \bK_\varepsilon + \sigma^2 \bI \right)^{-1}\vect{y} = \bK_\varepsilon \left(  \bK_\varepsilon + \frac{\sigma^2}{\gamma}\bI \right)^{-1} \vect{y}
  =\bMth \vect{y}
\end{equation}

Cross-validation is a natural way of picking hyperparameters, but one needs to
pick a cost function and a way of splitting the datasets. Leave-one-out (LOO) is
popular with GPs because there are closed-form formulas for two loss functions. 
One is the squared-loss. LOO cross-validation with the
squared-loss reads:
\begin{equation}
  \label{eq:loo-l2}
  C_{\mathrm{loo-mse}}(\vect{\theta})=\frac{1}{n} \sum_{i=1}^n \left(y_i - \E(f(x_i)| \vect{y}_{-i}, \vect{\theta}) \right)^2
\end{equation}
Here $\E(f(x_i)| \vect{y}_{-i}, \vect{\theta})$ is the posterior expectation of
$f(x_i)$ conditional on all the data except $y_i$.
Standard calculations using the Woodbury lemma show that an alternative
formula for the LOO loss is  \cite{friedman2001elements} : 
\begin{equation}
  \label{eq:loo-l2-reexpressed}
  C_{\mathrm{loo-mse}}(\vect{\theta})=
  \frac{1}{n} \sum_{i=1}^n \left(\frac{y_i - (\bMth \vect{y})_i}{1-\bMth(i,i)} \right)^2
\end{equation}
Evident from this formula is that LOO squared-error loss only depends on the
smoother matrix.
A different choice, one that takes uncertainty into
account, is to use the negative log-likelihood as a cost: 

\begin{equation}
  \label{eq:loo-ll}
  C_{\mathrm{loo-nll}}(\vect{\theta})=-\frac{1}{n} \sum_{i=1}^n \log p\left(y_i \vert
    \vect{y}_{-i},\vect{\theta} \right) =
  \frac{1}{n} \left\{ \sum_{i=1}^n  \frac{1}{2} \log \left(2\pi
      \Var(y_i|\vect{y}_{-i},\vect{\theta}) \right)  + \frac{1}{2} \frac{(y_i- \E(y_i|\vect{y}_{-i},\vect{\theta}))^2}{\Var(y_i|\vect{y}_{-i},\vect{\theta})}  \right\}
\end{equation}
Eq. \eqref{eq:loo-ll} also has an equivalent form that is faster to compute and
involves the smoother matrix, see \cite{williams2006gaussian} (p. 117). 

Finally, another way of selecting a hyperparameter, popular in the signal
processing community, is Stein's Unbiased Risk Estimate (SURE, \cite{li1985stein}), which assumes that $\sigma^2$ is known.
\begin{equation}
  \label{eq:SURE}
  C_{\mathrm{SURE}}(\vect{\theta})= -\sigma^2 + \frac{1}{n} \sum_{i=1}^n \left(y_i - (\bMth \vect{y})_i \right)^2 + \frac{2\sigma^2}{n}\Tr \bMth
\end{equation}
SURE is quite similar to AIC in that it features a loss term corrected by a
measure of model complexity, quantified here by the degrees of freedom of the
smoother matrix $\Tr \bMth$.

\section{Semi-Parametric models }
\label{sec:semi-parametric}

The goal of this section is to introduce some notation to unify linear smoothers
like GP regression, polynomial regression and polyharmonic spline regression; we
shall describe them all as ``semi-parametric'' GP models. Semi-parametric
regression \cite{ruppert2003semiparametric}, known in geostatistics as
``universal kriging'' \cite{matheron1969krigeage}, is not a new concept,
and the results in this section are not novel. However, we introduce some
notation that allows us to describe what happens in the flat limit in a compact
and unified way.

Semi-parametric Gaussian process models assume the unknown function $f(x)$ to be
of the form:
\begin{equation}
  \label{eq:semi-parametric-split}
  f(x) = g(x) + \sum_{i} \alpha_i v_i(x)
\end{equation}
where $g(x)$ is non-parametric and $\V = \{ v_1(x) \ldots v_m(x) \}$ is a set of
basis functions forming the parametric part. $g(x)$ is given a zero-mean Gaussian process
prior, and the prior on the weights $\alpha_i$ is the improper uniform prior
$p(\alpha_i) \propto 1$.
We will see later that these improper priors can be viewed as a GP with infinite
variance along certain directions. Despite the improper prior, the posterior is
well-defined under mild conditions (see below), and the resulting fit has useful
properties. Of course, if no basis functions are included, then the model is a
(non-parametric) GP regression, and if no non-parametric term is included, then
we have a parametric model. 

We use the following notation for describing semi-parametric models: 
\begin{definition}[Semi-parametric model]
  A semi-parametric model (SPM) over $\Omega \subseteq \R^d$ is a tuple
  $\mathcal{M}=\SPM{l}{\V}$, where $l(\bx,\by)$ is a (conditionally)-positive definite
  kernel (see def. \ref{def:unisolvent-cpd}) on
  $\Omega$, and $\V = {v_1(\bx),\ldots,v_m(\bx)}$ is a set of linearly-independent basis functions. 
\end{definition}
\begin{example}
  The following describes a SPM over $\R$: $\M =
  \SPM{\exp(-(x-y)^2)}{\{1,x,x^2\}}$. The non-parametric part is a Gaussian
  kernel, and the parametric part are the basis functions $1,x,x^2$. It
  corresponds to a standard GP regression with a Gaussian kernel, except that
  polynomial trends of degree $\leq 2$ are unpenalised.

  A parametric model over $\R$ is the special case where the kernel is uniformly
  0 or (equivalently) missing, {\it e.g.} $\M = \SPM{0}{\{\sin(x),\cos(x)\}}$ is a parametric
  model with two sinusoidal basis functions. A purely non-parametric model has
  $\V = \emptyset$, {\it e.g.} $\M = \SPM{\exp(-(x-y)^2)}{\emptyset}$ is a standard GP
  model with Gaussian covariance. 
\end{example}

A ``conditionally positive-definite'' kernel is a kernel that is only positive
definite on a subspace, as we explain below. The possibility for the
non-parametric kernel to be conditionally positive definite rather than positive
definite is probably non-obvious to the reader. An example where conditionally
positive-definite kernels are used is smoothing spline regression with linear
splines. This prior may be cast as $\M = \SPM{-|x-y|}{\{ 1 \}}$, {\it i.e.} with
a single basis function, namely the constant function. The function $l(x,y) =
-|x-y|$ is not positive definite, as can be easily verified. For instance, if we
evaluate the kernel matrix for $l$ at the locations $x_1=0,x_2=1$, we find $\bL
=
\begin{pmatrix}
  0 & -1 \\ -1 & 0
\end{pmatrix}
$, which has eigenvalues equal to $-1$ and $1$, whereas a positive definite
kernel would give non-negative eigenvalues. Nonetheless, the smoothing
splines SPM is well-defined, because, as we shall explain soon, kernels only
need to be positive definite along the directions orthogonal to the span of the
basis functions.

\begin{definition}[unisolvent sets, conditional positive-definiteness]
  \label{def:unisolvent-cpd}
  A set of locations $\X = \{\bx_1, \ldots, \bx_n\} \subset \Omega$ is said to be unisolvent for the SPM $\M =
  \SPM{l}{ \{ v_1, \ldots, v_m \}}$ if the matrix
  \begin{equation}
    \label{eq:matrix-basis-functions}
    \bV = [v_j(\bx_i)]_{i=1,j=1}^{n, m}
  \end{equation}
  has rank $m$. $\bV$ corresponds to the evaluation of the $m$ basis functions
  (along columns) at the points in $\X$ (along rows). We call it the basis
  matrix. It has a QR decomposition $\bV = \bQ\bR$, where $\bQ$ is an
  orthonormal basis for $\mspan \bV$. 

  The kernel matrix equals
  \begin{equation}
    \label{eq:L-kernel}
    \bL = [l(\bx_i,\bx_j)]_{i=1,j=1}^{n, n}
  \end{equation}
  and we use ``kernel matrix'' rather than covariance matrix because $\bL$ may
  not be positive definite.
  
  The condition that the kernel $l$ be \emph{conditionally positive-definite}
  (see \cite{wendland2004scattered}, ch. 8) with respect to $\V$ corresponds to the requirement that for all unisolvent
  $\X$, the matrix
  \[ \tbL = (\bI - \bQ\bQ^\top)\bL(\bI - \bQ\bQ^\top)\]
  be positive definite. Note that $\bI-\bQ\bQ^\top$ is a projector on the space
  orthogonal to $\mspan \bV$; the requirement is therefore that $\bL$ be
  positive semi-definite on the space orthogonal to $\mspan \bV$. 
\end{definition}
\begin{example}
  We return to $\M = \SPM{-|x-y|}{\{ 1 \}}$, linear smoothing splines, on the set $\X
  = \{ 0, 1 \}$. Here $\bV =
  \begin{pmatrix}
    1 \\ 1
  \end{pmatrix}
  $, which has (trivially) full column rank, so that $\X$ is unisolvent. The
  orthonormal form of the basis matrix is
  $\bQ =   \begin{pmatrix}
    1 \\ 1
  \end{pmatrix} / \sqrt{2}
  $. As before $\bL =
  \begin{pmatrix}
    0 & -1 \\ -1 & 0
  \end{pmatrix}
  $, and
  $\tbL =
  \begin{pmatrix}
    1/2 & -1/2 \\ -1/2 & 1/2
  \end{pmatrix}
   =   \begin{pmatrix}
    \ostwo \\ -\ostwo
  \end{pmatrix}\begin{pmatrix}
    \ostwo & -\ostwo
  \end{pmatrix}
  $, so that $\tbL$ is indeed positive semi-definite. 
  More generally, since in this case there is only one basis function,
  all sets $\X \in \R$ of size at least 1 are unisolvent. The fact that $l(x,y)
  = -|x-y|$ is conditionally positive-definite w.r.t. the constant 
  function is shown in \cite{micchelli1986interpolation}. 
\end{example}

The requirement that the measurement locations $\X$ be unisolvent is necessary
when using a SPM, because it essentially states that the basis functions need to
be identifiable from $\X$, or equivalently that the posterior distribution be
proper, despite the improper prior on the parametric part. We do not wish to linger too much on unisolvent
sets, except to note that there are non-trivial sets in $\R^d$ that are not
unisolvent w.r.t. polynomial basis functions. For instance, if $\V =
\{1,x_1,x_2,x_1x_2,x_1^2,x_2^2 \}$ in
$\R^2$ (where $x_1$ and $x_2$ represent the first two coordinates), then
choosing a point set where $x_1= x_2$ will lead to trouble, since the matrix
$\bV$ will have linearly-dependent columns. There are more surprising examples
to be found, but these examples are all algebraic sets, and thus occur with
probability 0 when sampling $\X$ independently from $\Omega$
\cite{sauer2006polynomial}. 

A useful way of thinking about SPMs is to view them as limits of standard
non-parametric GP models when the prior covariance along the span of the basis
functions goes to infinity (so that they become unpenalised). The details can be
found in appendix \ref{sec:semi-param-models-as-limits}, but the gist is that we may define a
family of kernels indexed by $\varepsilon$,
\[ k_\varepsilon(\bx,\by) = l(\bx,\by) + \frac{1}{\varepsilon} \sum_{i=1}^m v_i(\bx)v_i(\by)\]
which represents the prior covariance of a model
\[ f_\varepsilon(x) = g(x) + \sum_{i} \alpha_i v_i(x) \]
where $g(x)$ is a GP with covariance $l$ and $\alpha_1 \ldots \alpha_n$ are
sampled i.i.d. from a $\N(0,\varepsilon^{-1}) $ Gaussian. We let $\flatlim$ to make
the parametric part unpenalised, and appendix \ref{sec:semi-param-models-as-limits} shows that
\[ \lim_{\flatlim} \SPM{k_\varepsilon}{\emptyset} = \SPM{l}{\V}\]
{\it i.e.} the non-parametric model becomes semi-parametric in the limit. 
This explains why we are allowed to build ``improper'' models based on
conditionally-positive definite $l$: if the directions of\footnote{These directions correspond  to the space spanned by the eigenvectors associated with negative eigenvalues of $\bL$.} ``negative variance" are all along
the span of the basis functions, then they will be swamped for $\varepsilon$
large enough by the
$\frac{1}{\varepsilon}$ positive-definite term. A tidier construction would use
``intrinsic'' priors, as in \cite{rue2005gaussian}. 

We end this section with some concrete formulas for inference in SPMs, derived
from the limit viewpoint in appendix \ref{sec:semi-param-models-as-limits}.
These formulas can be used for implementation.

\begin{proposition}
  The conditional expectation in a SPM $\SPM{l}{\V}$ has the following form:
  \begin{equation}
    \label{eq:conditional-exp-semi-parametric}
    \E(f(x) | \vect{y} ) =
    \begin{pmatrix} \vect{l}_{x,\X} & \vect{v}_x \end{pmatrix}
    \saddle{\bL+\sigma^2\bI}{\bV}^{-1}
    \begin{pmatrix} \vect{y} \\ \vect{0} \end{pmatrix}
  \end{equation}
  where $\vect{l}_{x,\X} =
  \begin{bmatrix}
    l(x,x_1) & \ldots & l(x,x_n)
  \end{bmatrix}
  $ and $\vect{v}_x =
  \begin{bmatrix}
    v_1(x) & \ldots & v_m(x)
  \end{bmatrix}
  $.
\end{proposition}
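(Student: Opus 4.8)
The plan is to derive the formula as the $\flatlim$ limit of ordinary GP regression with the approximating kernel $k_\varepsilon(\bx,\by) = l(\bx,\by) + \tfrac{1}{\varepsilon}\sum_{i=1}^m v_i(\bx)v_i(\by)$, exactly as sketched in the discussion preceding the proposition. Writing $\bK_\varepsilon = \bL + \tfrac{1}{\varepsilon}\bV\bV^\top$ and $\vect{k}_{x,\X}^{(\varepsilon)} = \vect{l}_{x,\X} + \tfrac{1}{\varepsilon}\vect{v}_x\bV^\top$, the posterior mean of the non-parametric model $\SPM{k_\varepsilon}{\emptyset}$ at $x$ is $\vect{k}_{x,\X}^{(\varepsilon)}(\bK_\varepsilon + \sigma^2\bI)^{-1}\by$ by eq.~\eqref{eq:posterior-mean}. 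First I would substitute a new parameter $\tau = 1/\varepsilon$ and expand everything as $\tau \to \infty$; equivalently, set $\bV\bV^\top = \tau^{-1}$-weighted and track the dominant terms. The cleanest route is to introduce the change of variables that separates the ``stiff'' directions $\mspan\bV$ from their orthogonal complement.

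Concretely, I would use the QR factorisation $\bV = \bQ\bR$ with $\bQ$ orthonormal and complete $\bQ$ to an orthogonal matrix $[\bQ\ \Qort]$. In this basis $\bK_\varepsilon = \bL + \tfrac{1}{\varepsilon}\bQ\bR\bR^\top\bQ^\top$; the block along $\bQ$ blows up like $\varepsilon^{-1}$ while the block along $\Qort$ stays $\tbL = \Qort^\top\bL\Qort$, which is positive definite by conditional positive-definiteness (def.~\ref{def:unisolvent-cpd}) and unisolvence. Applying the block-inverse / Schur-complement formula to $(\bK_\varepsilon + \sigma^2\bI)^{-1}$ and letting $\flatlim$, the $\bQ$-block of the inverse tends to a finite limit that acts as a Lagrange-multiplier enforcing the constraint $\bV^\top(\cdot) = \vect{0}$, while the $\Qort$-block tends to $(\tbL + \sigma^2\bI)^{-1}$ restricted appropriately. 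One then recognises the limit of $\vect{k}_{x,\X}^{(\varepsilon)}(\bK_\varepsilon+\sigma^2\bI)^{-1}\by$ as precisely the first block-row of the solution of the saddle-point system
\[
  \saddle{\bL+\sigma^2\bI}{\bV}
  \begin{pmatrix}\ba \\ \bb\end{pmatrix}
  = \begin{pmatrix}\by \\ \vect{0}\end{pmatrix},
\]
paired against $(\vect{l}_{x,\X}\ \ \vect{v}_x)$; here the $\tfrac{1}{\varepsilon}\vect{v}_x\bV^\top$ contribution in $\vect{k}_{x,\X}^{(\varepsilon)}$ survives in the limit and supplies exactly the $\vect{v}_x\bb$ term. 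An alternative, possibly shorter, derivation avoids limits entirely: posit $\E(f(x)|\by) = \vect{l}_{x,\X}\ba + \vect{v}_x\bb$ for the saddle-point solution $(\ba,\bb)$, verify directly that this is the posterior mean of the SPM by checking it is the conditional expectation under the improper prior (e.g.\ via the standard Gaussian-conditioning argument on $(\by, f(x))$ with the $\varepsilon\to\infty$ prior, or by matching the penalised-least-squares normal equations of eq.~\eqref{eq:saddlepoint-regularised}-type).

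I would present the limit argument as the main line, since it is the one consistent with appendix~\ref{sec:semi-param-models-as-limits}, and defer the routine block-matrix algebra to that appendix. The main obstacle is controlling the $\flatlim$ limit of $(\bK_\varepsilon + \sigma^2\bI)^{-1}$ rigorously: the matrix $\tfrac{1}{\varepsilon}\bV\bV^\top$ is rank-$m$ and singular, so one cannot naively invert block by block, and one must show that the relevant Schur complement stays nonsingular uniformly (this is where unisolvence and conditional positive-definiteness of $l$ enter, guaranteeing $\tbL + \sigma^2\bI \succ 0$ and $\bV$ full column rank) and that the $\vect{v}_x\bb$ cross-term has a well-defined, finite limit rather than vanishing or diverging. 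Once the uniform nonsingularity is in hand, the identification of the limit with the saddle-point block-row is bookkeeping. I would also remark that the same computation yields the companion formula for $\Cov(f(x),f(x')|\by)$, which will be needed later, though the proposition as stated only asks for the mean.
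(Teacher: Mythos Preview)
Your approach is correct and shares the same overall strategy as the paper --- take the $\flatlim$ limit of ordinary GP regression with the augmented kernel $k_\varepsilon = l + \varepsilon^{-1}\sum v_iv_i$ --- but the technical machinery you use to handle the singular perturbation $(\bL+\sigma^2\bI+\varepsilon^{-1}\bV\bV^\top)^{-1}$ is different. You propose an explicit orthogonal change of basis $[\bQ\ \Qort]$ followed by block Schur-complement analysis; the paper instead invokes a Laurent expansion (Theorem~\ref{thm:laurent-expansion}) of the form $(\varepsilon\bC+\bV\bV^\top)^{-1}=\varepsilon^{-1}(\bB_0+\varepsilon\bB_1+\dots)$ and reads off the required relations from the ``master equation'' $\bV\bV^\top\bB_0=\matr{0}$, $\bV\bV^\top\bB_1+\bC\bB_0=\bI$. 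The master-equation route has two advantages: it makes the cancellation of the diverging $\varepsilon^{-1}\vect{v}_x\bV^\top\bB_0$ term immediate (since $\bV^\top\bB_0=0$), and it lets one rewrite the first two master equations directly as the saddle-point system (eq.~\eqref{eq:master-eq-1}), so the identification with $\saddle{\bL+\sigma^2\bI}{\bV}^{-1}$ drops out without tracking individual blocks. Your basis-change approach is more hands-on and arguably more transparent for a first reading, but it requires more bookkeeping (especially when you later need $\bB_2$ for the variance formula), and the uniform-nonsingularity issue you flag is handled in the paper's approach by the existence of the Laurent series itself rather than by explicit Schur-complement bounds.
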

\begin{proof}
  In appendix \ref{sec:semi-param-models-as-limits}. 
\end{proof}
One may check that this generalises the non-parametric case by removing the
basis functions, and the parametric case by setting $l = 0$ (which implies $\bL
= \matr{0}$ and $\vect{l}_{x,\X} = \vect{0}$). In addition, we can see in eq.
\eqref{eq:conditional-exp-semi-parametric} that a SPM results in a linear smoother (in the sense that the fit is a linear
function of $\by$), and that the fit takes the form
\[ \hat{f}(\bx) = \sum_{i=1}^n \alpha_i l(\bx,\bx_i)  + \sum_{j=1}^m \beta_j v_j(\bx)\]
where the coefficients $\vect{\alpha}$ and $\vect{\beta}$ depend on $\by$ and
$\sigma^2$. Although the fit seems to take its value in a $n+m$-dimensional
space of functions, eq. \eqref{eq:conditional-exp-semi-parametric} actually
implies the condition $\bV^\top\vect{\alpha} = 0$ which removes $m$ degrees of
freedom. This generalises the case of smoothing splines introduced earlier
((eq.see \eqref{eq:spline-space}).

An expression for the smoother matrix can be derived either by taking $\bx =
\bx_i$ in eq. \eqref{eq:conditional-exp-semi-parametric}, or by using the
correspondence between SPMs and extended-L-ensembles
\cite{tremblay2021extended,fanuel2020determinantal} to get
\begin{equation}
  \label{eq:smoother-spm}
  \bM = \bQ\bQ^\top+ \tbL (\tbL + \sigma^2 \bI)^{-1}
\end{equation}
where $\bQ$ is an orthonormal basis for $\bV$ and   $\tbL = (\bI -
\bQ\bQ^\top)\bL(\bI - \bQ\bQ^\top)$. Eq. \eqref{eq:smoother-spm} is the sum of a
projection matrix (as arises in a least-squares fit of a parametric model), and
a regularised fit (as arises in a non-parametric GP model), limited to the
subspace orthogonal to $\bV$. 

The conditional variance takes a similar form to eq.
\eqref{eq:conditional-exp-semi-parametric}, namely:
\begin{proposition}
  The conditional variance in a semi-parametric model $\SPM{l}{\V}$ equals:
  \begin{equation}
    \label{eq:conditional-var-semi-parametric}
    \Var(f(x) | \vect{y} ) = \vect{l}_{x,\X} - 
    \begin{pmatrix} \vect{l}_{x,\X} & \vect{v}_x \end{pmatrix}
    \saddle{\bL+\sigma^2\bI}{\bV}^{-1}
    \begin{pmatrix} \vect{l}_{\X,x} \\ \vect{v}_x^\top \end{pmatrix}
  \end{equation}
\end{proposition}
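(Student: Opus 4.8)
The plan is to prove the identity the same way the companion formula \eqref{eq:conditional-exp-semi-parametric} for the conditional expectation is proved in appendix \ref{sec:semi-param-models-as-limits}: realise $\SPM{l}{\V}$ as the $\flatlim$ limit of ordinary GP regressions whose prior kernels are the penalised kernels $k_\varepsilon(\bx,\by) = l(\bx,\by) + \varepsilon^{-1}\sum_{i=1}^m v_i(\bx)v_i(\by)$, and push the standard Gaussian posterior-variance formula through that limit. Writing $\bK^{(\varepsilon)} = \bL + \varepsilon^{-1}\bV\bV^\top$, $\bk^{(\varepsilon)}_{x,\X} = \vect{l}_{x,\X} + \varepsilon^{-1}\vect{v}_x\bV^\top$ and $k_\varepsilon(x,x) = l(x,x) + \varepsilon^{-1}\norm{\vect{v}_x}^2$, the non-parametric posterior variance is $\Var(f_\varepsilon(x)\mid\by) = k_\varepsilon(x,x) - \bk^{(\varepsilon)}_{x,\X}(\bK^{(\varepsilon)}+\sigma^2\bI)^{-1}\bk^{(\varepsilon)}_{\X,x}$, and the task is to identify its $\flatlim$ limit, which appendix \ref{sec:semi-param-models-as-limits} identifies with the SPM posterior distribution.

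The one ingredient I would isolate is the asymptotics, as $\flatlim$, of $(\bK^{(\varepsilon)}+\sigma^2\bI)^{-1} = (\bL+\sigma^2\bI+\varepsilon^{-1}\bV\bV^\top)^{-1}$ and of its contractions against $\bV$. Woodbury gives $(\bL+\sigma^2\bI+\varepsilon^{-1}\bV\bV^\top)^{-1} = \bA^{-1} - \bA^{-1}\bV(\varepsilon\bI + \bV^\top\bA^{-1}\bV)^{-1}\bV^\top\bA^{-1}$ with $\bA = \bL+\sigma^2\bI$ (or, when $\bA$ is not invertible, the same computation carried out in the $(\bQ,\Qort)$ frame of the QR factorisation $\bV = \bQ\bR$, where $\varepsilon^{-1}\bV\bV^\top$ inflates only the $\bQ$-block, so a single $2\times2$ block inversion with the inflated block $\to\infty$ does the job). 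Expanding in powers of $\varepsilon$, and writing $[\,\cdot\,]_{ij}$ for the blocks of $\saddle{\bL+\sigma^2\bI}{\bV}^{-1}$, this yields three facts: $(\bK^{(\varepsilon)}+\sigma^2\bI)^{-1}\to[\,\cdot\,]_{11}$; since $[\,\cdot\,]_{11}\bV = \matr{0}$, the naively-divergent $\varepsilon^{-1}(\bK^{(\varepsilon)}+\sigma^2\bI)^{-1}\bV$ is in fact bounded and tends to $[\,\cdot\,]_{12}$; and the only genuinely divergent quantity $\varepsilon^{-2}\bV^\top(\bK^{(\varepsilon)}+\sigma^2\bI)^{-1}\bV$ equals $\varepsilon^{-1}\bI + [\,\cdot\,]_{22} + O(\varepsilon)$. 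Unisolvency of $\X$ together with conditional positive-definiteness of $l$ w.r.t.\ $\V$ (so that $\tbL+\sigma^2\bI$ is positive definite on $(\mspan\bV)^\perp$) are exactly what make the saddle-point matrix invertible and all these limits well defined.

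Given those asymptotics I would expand $\bk^{(\varepsilon)}_{x,\X}(\bK^{(\varepsilon)}+\sigma^2\bI)^{-1}\bk^{(\varepsilon)}_{\X,x}$ into its four terms according to $\bk^{(\varepsilon)}_{x,\X} = \vect{l}_{x,\X} + \varepsilon^{-1}\vect{v}_x\bV^\top$, substitute, and collect powers of $\varepsilon$: the $\bV$-free term tends to $\vect{l}_{x,\X}[\,\cdot\,]_{11}\vect{l}_{\X,x}$, the two cross terms tend to $\vect{l}_{x,\X}[\,\cdot\,]_{12}\vect{v}_x^\top$ and $\vect{v}_x[\,\cdot\,]_{12}^\top\vect{l}_{\X,x}$, and the term quadratic in $\bV$ equals $\varepsilon^{-1}\norm{\vect{v}_x}^2 + \vect{v}_x[\,\cdot\,]_{22}\vect{v}_x^\top + O(\varepsilon)$, whose divergence is cancelled exactly by the $\varepsilon^{-1}\norm{\vect{v}_x}^2$ inside $k_\varepsilon(x,x)$. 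What survives is
\[ l(x,x) - \begin{pmatrix}\vect{l}_{x,\X} & \vect{v}_x\end{pmatrix}\saddle{\bL+\sigma^2\bI}{\bV}^{-1}\begin{pmatrix}\vect{l}_{\X,x}\\ \vect{v}_x^\top\end{pmatrix}, \]
which is the claimed formula (replacing $\vect{v}_x$ by $\vect{v}_{x'}$ on the right gives $\Cov(f(x),f(x')\mid\by)$ at no extra cost). As sanity checks, $\V=\emptyset$ collapses this to the non-parametric posterior variance and $l\equiv0$ to the parametric one \eqref{eq:polyreg-variance}.

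The main obstacle is precisely the bookkeeping of the $\varepsilon^{-1}$ divergences in that last step: one must check that each naively divergent term is either actually bounded (because $[\,\cdot\,]_{11}\bV = \matr{0}$) or has its divergence matched and cancelled by the $\varepsilon^{-1}\norm{\vect{v}_x}^2$ contribution of $k_\varepsilon(x,x)$, and -- more importantly -- that the surviving $O(1)$ remainder reassembles into the saddle-point inverse rather than some unrelated finite matrix. I expect this to be routine once the Woodbury expansion (equivalently, the $2\times2$ block inversion in the $(\bQ,\Qort)$ frame) is in place, since it reduces everything to one block inversion plus a Neumann expansion of $(\varepsilon\bI + \bV^\top\bA^{-1}\bV)^{-1}$. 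An alternative I would keep in reserve is to adjoin the prediction point $x$ to $\X$, treat $f(x)$ as an unobserved extra coordinate, and read off the variance from the SPM inference formula \eqref{eq:conditional-exp-semi-parametric} applied to the enlarged model; this avoids an explicit limit but reduces to the same block algebra.
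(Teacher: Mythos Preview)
Your proposal is correct and follows the same overall strategy as the paper: realise the SPM as the $\flatlim$ limit of the non-parametric model with kernel $k_\varepsilon = l + \varepsilon^{-1}\sum_i v_iv_i$ and push the standard posterior-variance formula through that limit, checking that the divergent pieces cancel. The technical execution differs, however. The paper does not use Woodbury; instead it invokes its Laurent-expansion framework (Theorem~\ref{thm:laurent-expansion}): write $(\varepsilon(\bL+\sigma^2\bI)+\bV\bV^\top)^{-1} = \varepsilon^{-1}(\bB_0+\varepsilon\bB_1+\varepsilon^2\bB_2+\ldots)$, derive the master equations $\bV\bV^\top\bB_i + (\bL+\sigma^2\bI)\bB_{i-1}=\delta_{i,1}\bI$, and then read off the $\varepsilon^{-2}$, $\varepsilon^{-1}$ and $\varepsilon^{0}$ coefficients of the variance one at a time, solving the master equations as needed (this forces going one order further, to $\bB_2$, than the expectation proof). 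The identification with the saddle-point inverse is done at the very end via the explicit block-inverse formula.

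Your Woodbury/Neumann route is more elementary and arguably more transparent: once you have the three asymptotics $(\bK^{(\varepsilon)}+\sigma^2\bI)^{-1}\to[\,\cdot\,]_{11}$, $\varepsilon^{-1}(\bK^{(\varepsilon)}+\sigma^2\bI)^{-1}\bV\to[\,\cdot\,]_{12}$, and $\varepsilon^{-2}\bV^\top(\bK^{(\varepsilon)}+\sigma^2\bI)^{-1}\bV = \varepsilon^{-1}\bI + [\,\cdot\,]_{22}+O(\varepsilon)$, the four-term expansion and the cancellation are immediate. The price you pay is the case analysis when $\bA=\bL+\sigma^2\bI$ is singular (which can happen since $l$ is only conditionally positive definite); your $(\bQ,\Qort)$-frame workaround is fine, but the paper's Laurent-expansion machinery handles this uniformly without needing $\bA^{-1}$. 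Either way the bookkeeping you flag as the ``main obstacle'' is indeed routine.
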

\begin{proof}
  In appendix \ref{sec:semi-param-models-as-limits}.
\end{proof}
Here again the parametric and non-parametric special cases can be recovered by
setting $l=0$ or removing the basis functions.

% Indeed, the
% advantage of using an improper prior for the parametric part is that any
% variation in the data $\by$ along the span of the basis functions will be
% assigned to the parametric part, and not $g(x)$.
% For instance, the most basic
% example of a semi-parametric model has a single basis functions, $v(x)=1$. The
% effect of the flat prior on $\alpha$ is that the mean of the fit will equal the
% mean of $\by$. The estimator is therefore co-variant with shifts in the data: if the data is shifted from $\vect{y}$ to
% $\vect{y}+a\ones$, where $a$ is a scalar, then the fit goes from
% $\hat{f}(x)$ to $\hat{f}(x)+a$. This is only possible if the prior has infinite
% variance in the corresponding direction, {\it i.e.} $\ones (\bK)^{-1} \ones = 0$,
% leaving that dimension unpenalised. In addition to co-variance with respect to shift, we
% may also desire co-variance with respect to linear trends, quadratic trends,
% etc. The smoothing splines introduced in section [] are co-variant with respect
% to polynomial trends up to a certain order. Polynomial regression is obviously
% co-variant with polynomial trends (up to the order of the regression). 
% We shall see in section [] that GP regression always behaves in the flat
% limit like regression with an intrinsinc prior.

\section{Prediction-equivalence of semi-parametric models}
\label{sec:prediction-equivalence}

In the flat limit, standard GP models become equivalent to certain
semi-parametric models (SPMs), in the sense that they give the same predictions
(conditional expectation and conditional variance) regardless of what the value
of $\sigma^2$ is, where the measurements  $\X$  occur and where the prediction is
sought. The aim of this section is to formalise the notion of
predictive-equivalence of SPMs, and to exhibit a simple criterion for proving
equivalence based on the smoother matrix. 

\begin{definition}[Prediction-equivalence for semiparametric models]
  \label{def:pred-equiv-semip}
  Two semi-parametric models $\M = \SPM{l}{\V}$ and $\M' = \SPM{l'}{\V'}$ are said to be prediction-equivalent over a domain
  $\Omega \in \R^d$, noted $\M \sim \M'$, if $|\V|=|\V'|$, and for any finite $\X
  \subset \Omega$ unisolvent for $\V$ and $\V'$, for all $x \in \Omega$, $\by \in \R^{|\X|}$, $\sigma^2 \in \R^+$:
  \begin{enumerate}
  \item The predictive expectations (eq. \eqref{eq:conditional-exp-semi-parametric}) are equal:  $\E_{\M}(f(x) | \by) = \E_{\M'}(f(x) | \by) $
  \item The predictive variances (eq. \eqref{eq:conditional-var-semi-parametric}) are equal :  $\Var_{\M}(f(x) | \by) = \Var_{ \M'}(f(x) | \by) $
  \end{enumerate}
  {\it i.e.}, the predictive distributions are equal.
\end{definition}

In some cases the predictive-equivalence of two models is easy enough to establish. For instance, if $\V$ and
$\V'$ are two sets of basis functions for the same function space, then
whatever $l$, $\SPM{l}{\V} \sim \SPM{l}{\V'}$. To take a concrete example, $\V =
\{ x_1, x_2\}$ spans the same space as $\V' = \{ x_1-x_2,x_1+x_2 \}$
and so using one rather than the other changes nothing to the model
(theoretically, if not numerically). If the intuitive argument does not convince, one can also
check equivalence directly via eq. \eqref{eq:conditional-exp-semi-parametric}
and \eqref{eq:conditional-var-semi-parametric}.

A more subtle source of prediction-equivalence is the following: if $v_i \in
\V$, then, for any $\alpha$:
\[ \SPM{l(x,y) + \alpha v_i(x)v_i(y)}{\V} \sim \SPM{l(x,y)}{\V}\]
This form of prediction-equivalence follows directly from the argument outlined
in section \ref{sec:semi-param-models-as-limits}, or again can be checked via eqs.
\eqref{eq:conditional-exp-semi-parametric} and
\eqref{eq:conditional-var-semi-parametric}. By extension, for any set of
coefficients $(\alpha_i)$,
\[ \SPM{l(x,y) + \sum_{i=1}^m \alpha_i v_i(x)v_i(y)}{\V} \sim \SPM{l(x,y)}{\V}\]

In our flat limit computations however, we cannot show equivalence so directly. What we
have access to are smoother matrices, but it turns out that this is enough. The
next lemma is essential for our proofs, and concerns prediction-equivalence of
two \emph{non-parametric} models. 
\begin{lemma}
  \label{lem:equivalence-smoother}
  These two statements are equivalent:
  \begin{enumerate}[a)]
  \item The nonparametric models with covariance $k$ and $k'$ are
    prediction-equivalent on $\Omega$, {\it i.e.} $ \SPM{k}{\emptyset} \sim \SPM{k'}{\emptyset}$.
  \item   For all finite $\X \subset \Omega$, $\sigma^2 \in \R^+$, the smoother matrices $\bM_{\X} = 
    \bK_\X(  \bK_\X
    + \sigma^2 \bI)^{-1}$ and $\bM'_{\X} =  \bK'_\X(  \bK'_\X
    + \sigma^2 \bI)^{-1}$ are equal
  \end{enumerate}
\end{lemma}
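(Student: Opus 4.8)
The plan is to prove the two implications separately. The direction (a) $\Rightarrow$ (b) will be essentially immediate once one evaluates the predictive expectation at a point that already lies in the sample $\X$; the substantive direction is (b) $\Rightarrow$ (a), which I would handle by first showing that a smoother matrix determines its kernel matrix, and then recovering all the kernel evaluations needed at a new test point by simply adding that point to $\X$.

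For (a) $\Rightarrow$ (b): fix a finite $\X = \{x_1,\dots,x_n\} \subset \Omega$ and $\sigma^2 \in \R^+$. Since $\V = \emptyset$ here, the SPM predictive expectation \eqref{eq:conditional-exp-semi-parametric} collapses to the ordinary GP formula $\E_k(f(x)\mid\by) = \vect{k}_{x,\X}(\bK_\X + \sigma^2\bI)^{-1}\by$. Taking $x = x_i$, the row $\vect{k}_{x_i,\X}$ is exactly the $i$-th row of $\bK_\X$, so $\E_k(f(x_i)\mid\by) = (\bM_\X\by)_i$, and likewise for $k'$. Prediction-equivalence forces $(\bM_\X\by)_i = (\bM'_\X\by)_i$ for every $i$ and every $\by \in \R^n$, hence $\bM_\X = \bM'_\X$; since $\X$ and $\sigma^2$ were arbitrary this is (b). (Note that the predictive-variance half of (a) is not even needed for this direction.)

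For (b) $\Rightarrow$ (a): the key algebraic step is that the smoother matrix determines the kernel matrix. From $\bM_\X = \bK_\X(\bK_\X + \sigma^2\bI)^{-1}$ one computes $\bI - \bM_\X = \sigma^2(\bK_\X + \sigma^2\bI)^{-1}$, which is invertible since $\bK_\X + \sigma^2\bI$ is positive definite, so $\bK_\X = \sigma^2\big((\bI - \bM_\X)^{-1} - \bI\big)$. Applying this (for a single $\sigma^2 > 0$, hence a fortiori under (b)) gives $\bK_\X = \bK'_\X$ for every finite $\X \subset \Omega$; taking two-point sets $\X = \{x,y\}$ and reading off entries shows $k(x,y) = k'(x,y)$ everywhere on $\Omega^2$. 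Because the non-parametric predictive expectation and variance \eqref{eq:conditional-exp-semi-parametric}--\eqref{eq:conditional-var-semi-parametric} are assembled purely from kernel evaluations --- namely $\vect{k}_{x,\X}$, $\bK_\X$ and the scalar $k(x,x)$, which together are precisely the entries of $\bK_{\X\cup\{x\}} = \bK'_{\X\cup\{x\}}$ --- the predictive expectation and variance of the two models coincide, which is (a).

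I do not expect a genuine obstacle here: this lemma is a tool whose purpose is to let the later flat-limit computations, which naturally yield smoother matrices on the sample, be translated into statements about full predictive distributions at arbitrary new points. The only points requiring (minor) care are that $\bI - \bM_\X$ is invertible for $\sigma^2 > 0$, and that in the purely non-parametric setting every finite subset of $\Omega$ --- in particular the augmented set $\X \cup \{x\}$ --- is automatically unisolvent, so the augmentation step in (b) $\Rightarrow$ (a) is always legitimate.
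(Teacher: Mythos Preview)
Your proof is correct, but the route for (b) $\Rightarrow$ (a) is genuinely different from the paper's. You invert the smoother to recover the kernel matrix via $\bK_\X = \sigma^2\big((\bI-\bM_\X)^{-1}-\bI\big)$, conclude $k=k'$ pointwise, and then prediction-equivalence is immediate. This is arguably cleaner and in fact proves the stronger statement that the two kernels coincide as functions. The paper instead works directly with the augmented set $\Y=\X\cup\{x\}$ and shows, via Schur complements and the Woodbury identity, that the predictive mean and variance at $x$ given data on $\X$ are explicit rational functions of the entries of $\bM_\Y$ alone (e.g.\ $v_x=\sigma^2 c/(1-c)$ with $c=\dn^\top\bM_\Y\dn$). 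What the paper's route buys is an explicit, smooth map from the smoother matrix on $\Y$ to the predictive quantities, which is exactly what is invoked later (in the proof of Proposition~\ref{prop:pred-eq-smoother-semipar} and Theorem~\ref{thm:equivalent-kernels-1d}) to pass from $\bM_\varepsilon=\bM_0+\O(\varepsilon)$ to convergence of predictive means and variances. Your argument also yields this smoothness --- since $\bK_\Y$ is a smooth function of $\bM_\Y$ and the predictive formulas are smooth in the kernel entries --- but you may want to make that observation explicit so the lemma can be cited in the asymptotic arguments downstream.
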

\begin{proof}
We prove each implication separately. (a) $\implies$ (b) is straightforward.
Since $ \SPM{k}{\emptyset} \sim \SPM{k'}{\emptyset}$, then $\E_{ k}(x|\by) = \E_{ k'}(x|\by)$ for all $x$,
including $x\in \X$, implying that:
\[ \vect{\delta}_i \bM_{\X} \by = \vect{\delta}_i \bM'_{\X} \by  \]
for all $i \in \{1,\ldots,|\X| \}$ and $\by \in \R^{|\X|}$. This implies equality of $\bM_\X$ and
$\bM'_\X$ ({\it i.e.}, take $\by$ to be any $\vect{\delta}_j$ ). 

(b) $\implies$ (a) is less direct, but essentially the same as the derivation
for the fast formula 
for leave-one-out cross-validation.

The main trick is that the prediction mean
  and  variance at $x$  given observations at $\X$ can be computed from the
  smoother matrix for $\Y = \X \cup x$. Let $|\Y| = n$.
  We begin with the variance. The predictive variance at $x$ for kernel $k$
  equals
  \[ v_x = k_{x,x} - \bk_{x,\X}(\bK_\X+\sigma^2\bI)^{-1} \bk_{\X,x}\]
  which is a Schur complement in the block matrix
  \begin{equation}
    \begin{pmatrix}
      \label{blockmatrixdemo:eq}
      \bK_{\X} + \sigma^2\bI &   \bk_{\X,x} \\
      \bk_{x,\X} & k_{x,x}
    \end{pmatrix} = \bK_{\Y} + \sigma^2 \bI - \sigma^2 \dn \dn^\top
  \end{equation}
  {\it i.e.}, we have
  \[ \frac{1}{v_x} = \dn^\top(\bK_{\Y} + \sigma^2 \bI - \sigma^2 \dn \dn^\top)^{-1}
    \dn\]
  Applying the Woodbury lemma, we have
  \[
    \frac{1}{v_x} = \dn^\top(\bP + \frac{\sigma^2 \bP \dn \dn^\top\bP}{1-\sigma^2\dn^\top\bP \dn})\dn
  \]
  where $\bP = (\bK_\Y + \sigma^2\bI)^{-1} = \sigma^{-2} (\bI - \bM_{\Y})$. Noting $c
  = \dn^\top\bM_{\Y} \dn$ and simplifying, we obtain:
  \[ v_x = \sigma^2\frac{c}{1-c}\]
  By equality of the smoother matrices $\bM_{\Y} = \bM'_{\Y}$ for any ${\Y}$, we have $c'=c$ and
  $v_x'=v_x$, thus the predictive variances are equal. 
  For the predictive means, one can repeat a similar computation with the
  following formula:
  \[ \E(f(x) | \by) = \sigma^{-2} \dn^\top(\bK_{\Y}^{-1} + \sigma^{-2}\bI - \sigma^{-2} \dn
    \dn^\top)^{-1}
    \begin{pmatrix}
      \by \\
      0
    \end{pmatrix}
  \]
  which is obtained from the conditional posterior over $\vect{f}_\X,f(x)$ given
  $\vect{y}$. Applying the Woodbury lemma again, we see that the expectation
  depends only on the smoother matrix for $\Y$. The calculation is equivalent to proving formula
  (5.26) in \cite{friedman2001elements}.

  An alternative way of proving the same result uses the block inverse formula for the matrix of the l.h.s in equation  (\ref{blockmatrixdemo:eq}), since we have
  \[
    -\frac{\E(f(x) | \by) }{v_x} = \dn^\top (\bK_{\Y} + \sigma^2 \bI - \sigma^2 \dn \dn^\top)^{-1} \begin{pmatrix}
      \by \\
      0
    \end{pmatrix}
  \]
  which leads using the previous calculations to 
  \[
    \E(f(x) | \by) = \frac{\dn^\top \bM_\Y 
      \begin{pmatrix}
        \by \\
        0
      \end{pmatrix}}{1-c}
  \]
  which shows again that the {\it a posteriori} mean at $x$ only depends on the full smoother matrix $\bM_\Y$.
\end{proof}

With the above lemma in hand, extension to semi-parametric models is
straightforward:
As with standard kernels, predictive equivalence can be assessed from equality
of smoother matrices:
\begin{proposition}
  \label{prop:pred-eq-smoother-semipar}
  These two statements are equivalent: letting $\M = \SPM{l}{\V},\M' = \SPM{l'}{\V'}$
  \begin{enumerate}[a)]
  \item $\M \sim \M' $.
  \item   For all finite $\X \subset \Omega$ unisolvent for $\V$ and $\V'$, $\sigma^2 \in \R^+$, the smoother matrices for $\M$ and $\M'$ at $\X$
    are equal.
     \end{enumerate}
\end{proposition}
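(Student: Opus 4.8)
The plan is to reduce Proposition~\ref{prop:pred-eq-smoother-semipar} to Lemma~\ref{lem:equivalence-smoother} by representing each semi-parametric model as a limit of purely non-parametric ones, exactly as in the construction of Section~\ref{sec:semi-param-models-as-limits}. Concretely, for a SPM $\M = \SPM{l}{\V}$ with orthonormal basis $\bQ$ for $\mspan\bV$, introduce the one-parameter family of genuinely positive-definite kernels $k_\varepsilon(\bx,\by) = l(\bx,\by) + \tfrac1\varepsilon \sum_{i=1}^m v_i(\bx) v_i(\by)$, so that $\M = \lim_{\flatlim} \SPM{k_\varepsilon}{\emptyset}$ both at the level of predictive expectation/variance (eqs.~\eqref{eq:conditional-exp-semi-parametric}, \eqref{eq:conditional-var-semi-parametric}) and at the level of the smoother matrix. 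The point is that eq.~\eqref{eq:smoother-spm}, $\bM = \bQ\bQ^\top + \tbL(\tbL+\sigma^2\bI)^{-1}$, is precisely the $\flatlim$ limit of $\bK_\varepsilon(\bK_\varepsilon + \sigma^2\bI)^{-1}$, where $\bK_\varepsilon = \bL + \tfrac1\varepsilon \bV\bV^\top$ (this limit identity is either already recorded in the appendix or a short linear-algebra check using that $\tfrac1\varepsilon\bV\bV^\top$ blows up exactly along $\mspan\bQ$ and leaves the orthogonal complement, on which it acts as $\tbL$, untouched). I would state this limit identity as the first step, citing appendix~\ref{sec:semi-param-models-as-limits}.

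Second, I would prove the direction (a) $\implies$ (b), which is the easy one and mirrors the first half of the proof of Lemma~\ref{lem:equivalence-smoother}. If $\M \sim \M'$, then in particular $\E_\M(f(x)|\by) = \E_{\M'}(f(x)|\by)$ for every unisolvent $\X$, every $x$, and every $\by$; taking $x = x_i \in \X$ for each $i$ gives $\vect{\delta}_i \bM_\X \by = \vect{\delta}_i \bM'_\X \by$ for all $\by$, whence $\bM_\X = \bM'_\X$. This uses nothing beyond eq.~\eqref{eq:conditional-exp-semi-parametric} restricted to sampled locations and the definition of the smoother matrix.

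Third, and this is where the real content lies, I would prove (b) $\implies$ (a). Fix a common unisolvent $\X$ and a value $\sigma^2$, and suppose $\bM_\X = \bM'_\X$. I want to lift this to equality of the predictive expectation and variance at an arbitrary test point $x$. The natural route is the same Schur-complement/Woodbury trick used in Lemma~\ref{lem:equivalence-smoother}: augment the point set to $\Y = \X \cup \{x\}$. The subtlety is that $x$ must be incorporated into a SPM-friendly object, so I would work with the \emph{augmented} smoother matrix $\bM_\Y$ built from $\SPM{l}{\V}$ (or equivalently its limiting construction $\lim_{\flatlim}\bK_\varepsilon(\bK_\varepsilon+\sigma^2\bI)^{-1}$ on $\Y$), and then apply Lemma~\ref{lem:equivalence-smoother}'s formulas $v_x = \sigma^2 c/(1-c)$ with $c = \dn^\top \bM_\Y \dn$, and $\E(f(x)|\by) = \dn^\top \bM_\Y (\by;0)/(1-c)$, which were derived there for \emph{arbitrary} kernels, including the positive-definite $k_\varepsilon$. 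Passing to the $\flatlim$ limit, these identities carry over verbatim to the SPM since both sides converge (the denominators $1-c$ stay bounded away from $0$ as long as $\Y$ remains unisolvent for $\V$, which it is because $\X \subseteq \Y$). Hence it suffices that $\bM_\Y = \bM'_\Y$ for every $\Y$ of the form $\X \cup \{x\}$ — but that is exactly hypothesis (b) applied to the (still unisolvent) point set $\Y$. Chaining these gives $v_x = v'_x$ and $\E_\M(f(x)|\by) = \E_{\M'}(f(x)|\by)$, and since $x, \by, \sigma^2$ were arbitrary, $\M \sim \M'$.

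The main obstacle is bookkeeping around the limit: one must check that $\Y = \X \cup \{x\}$ is unisolvent for $\V$ whenever $\X$ is (immediate, since adding rows cannot decrease the rank of $\bV$), that the limiting smoother-matrix identity eq.~\eqref{eq:smoother-spm} genuinely holds so that Lemma~\ref{lem:equivalence-smoother}'s scalar formulas survive the limit, and that the quantities $c, 1-c$ behave well (in particular $1-c \neq 0$) in the limit — all of which follow from conditional positive-definiteness of $l$ on $\mspan\bQ^\perp$ together with $\sigma^2 > 0$. A cleaner alternative, if one prefers to avoid the limiting argument, is to redo the Schur-complement computation of Lemma~\ref{lem:equivalence-smoother} directly on the saddle-point system $\saddle{\bL+\sigma^2\bI}{\bV}$ of eq.~\eqref{eq:conditional-exp-semi-parametric}, showing that the predictive mean and variance at $x$ are rational functions of the augmented saddle-point inverse that depend on $(\bL,\bV)$ only through $\bM_\Y$; I would mention this as a remark but carry out the proof via the limit, since it reuses Lemma~\ref{lem:equivalence-smoother} as a black box.
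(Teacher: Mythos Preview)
Your proposal is correct and follows essentially the same route as the paper: (a) $\Rightarrow$ (b) is handled exactly as in Lemma~\ref{lem:equivalence-smoother}, and for (b) $\Rightarrow$ (a) the paper also represents each SPM as the limit of the non-parametric family $k_\varepsilon = l + \varepsilon^{-1}\sum v_i(x)v_i(y)$, invokes Corollary~\ref{cor:limit-smoother-matrix-semipar} to get $\bM_\varepsilon = \bM_0 + \O(\varepsilon)$ with $\bM_0 = \bM'_0$ by hypothesis, and then uses that the predictive mean and variance depend continuously on the smoother matrix (via the proof of Lemma~\ref{lem:equivalence-smoother}) to conclude. Your write-up is in fact more explicit than the paper's about the augmented set $\Y = \X \cup \{x\}$, its unisolvency, and the non-degeneracy of $1-c$, all of which the paper leaves implicit.
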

\begin{proof}
  
  The proof is a variant of lemma \ref{lem:equivalence-smoother}. (a)
  $\implies$ (b) follows from the same argument.

  For (b) $\implies$ (a), we use the characterisation of semi-parametric models
  as limits. $k_\varepsilon(x,y) = l(x,y) + \varepsilon^{-1} \sum_{v \in \V} v(x)v(y)$
  converges in $\flatlim$ to the semi-parametric model $S$, and similarly
  $k'_\varepsilon(x,y) =  l'(x,y) + \varepsilon^{-1} \sum_{v \in \V'} v'(x)v'(y)$
  goes to $S'$, in the sense that the predictive means and variances converge to
  that of $S$ and $S'$. By corollary \ref{cor:limit-smoother-matrix-semipar}, we
  know that the smoother matrix if $k_\varepsilon$ equals $\bM_\varepsilon = \bM_0 +
  \O(\varepsilon)$, its counterpart $\bM'_\varepsilon = \bM'_0 +
  \O(\varepsilon)$ and, by assumption, since the smoother matrices for $S$ and
  $S'$ are equal then $\bM_0 = \bM'_0$. From the proof of lemma
  \ref{lem:equivalence-smoother}, we know that the predictive means and
  variances for  $k_\varepsilon$ and $k'_\varepsilon$ are continuous functions
  of $\bM(\varepsilon)$ and $\bM'(\varepsilon)$, and therefore have the same limit as $\flatlim$.
\end{proof}

In practice, as mentioned in section \ref{sec:hyperpar-selec}, a ``vertical
scale'' hyperparameter is present in GP models. For semiparametric models, this
means that we consider the family $\M(\gamma) = \SPM{\gamma l}{\V}$ indexed by $\gamma
 \in \R^+$, and $\gamma$ is set by minimising a hyperparameter selection
 criterion like those in section \ref{sec:hyperpar-selec}. The marginal
 likelihood cannot be used here (unmodified, at least), because the prior is
 improper. This leaves us with cross-validation and SURE \footnote{AIC and
   Generalised Cross Validation would work as well}. An important property of predictive-equivalent models is
 that two equivalent models remain equivalent post-selection: the value of these
 selection criteria are equal for all values of $\sigma^2$ and $\gamma$. 
 \begin{proposition}[Post-selection equivalence]
   \label{prop:post-selec-equiv}
   Let $\M = \SPM{l}{\V} \sim \M' = \SPM{l'}{\V}$ on $\Omega$, and
   consider the families of models $\M(\gamma) = \SPM{\gamma l}{\V}$
   and $\M'(\gamma) = \SPM{\gamma l'}{\V}$.
   Then for any data $\by$ and noise variance $\sigma^2$, the value of the
   selection criteria given by eq. \eqref{eq:loo-l2}, eq. \eqref{eq:loo-ll} and
   eq. \eqref{eq:SURE} are the same for $\M(\gamma)$ and $\M'(\gamma)$.
   Consequently, if $\gamma^\star$ is the optimal value of the criterion for
   $\M$, it equals the optimal value for $\M'$, and the selected models
   $\M(\gamma^\star)$ and $\M'(\gamma^\star)$ are prediction-equivalent. 
 \end{proposition}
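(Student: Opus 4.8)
The plan is to reduce the whole statement to one observation: each of the three selection criteria is a function of the data $\by$, the noise variance $\sigma^2$, the sample size $n$, and the smoother matrix $\bMth$ alone, and rescaling the kernel by $\gamma$ merely reparametrises the noise level, so that prediction-equivalence of $\M$ and $\M'$ propagates to $\M(\gamma)$ and $\M'(\gamma)$ for every $\gamma>0$.

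First I would record the scaling identity for smoother matrices. For a SPM $\SPM{l}{\V}$ the smoother matrix at a unisolvent $\X$ with noise $\sigma^2$ is $\bM = \bQ\bQ^\top + \tbL(\tbL + \sigma^2\bI)^{-1}$ by eq.~\eqref{eq:smoother-spm}, where $\bQ$ depends only on $\V$ and $\tbL = (\bI-\bQ\bQ^\top)\bL(\bI-\bQ\bQ^\top)$. Replacing $l$ by $\gamma l$ replaces $\bL$ by $\gamma\bL$, hence $\tbL$ by $\gamma\tbL$, while $\bQ$ (same basis functions) is unchanged; therefore the smoother matrix of $\M(\gamma)$ at $(\X,\sigma^2)$ equals $\bQ\bQ^\top + \tbL(\tbL + (\sigma^2/\gamma)\bI)^{-1}$, i.e.\ the smoother matrix of $\M$ at $(\X,\sigma^2/\gamma)$ --- exactly the rescaling already visible in eq.~\eqref{eq:smoother-matrix-gp}. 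The same holds for $\M'$. Now I would invoke Proposition~\ref{prop:pred-eq-smoother-semipar}: since $\M \sim \M'$, their smoother matrices coincide for every unisolvent $\X$ and every $\sigma^2\in\R^+$, so for fixed $\gamma>0$ and arbitrary $\sigma^2$ the smoother matrices of $\M(\gamma)$ and $\M'(\gamma)$ at $(\X,\sigma^2)$ both equal the common smoother matrix at $(\X,\sigma^2/\gamma)$, hence are equal. Applying Proposition~\ref{prop:pred-eq-smoother-semipar} once more gives $\M(\gamma)\sim\M'(\gamma)$ for every $\gamma$.

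Next I would check, criterion by criterion, that the value depends on the model only through $\bMth$ (given $\by,\sigma^2,n$). For \texttt{LOO-MSE} this is immediate from eq.~\eqref{eq:loo-l2-reexpressed}. For \texttt{SURE} it is immediate from eq.~\eqref{eq:SURE}, whose residual term and $\Tr\bMth$ are both smoother-matrix functionals. For \texttt{LOO-NLL} (eq.~\eqref{eq:loo-ll}) one needs both the leave-one-out predictive means $\E(y_i|\by_{-i},\vect{\theta})$ and variances $\Var(y_i|\by_{-i},\vect{\theta})$; these are the standard Sherman--Morrison leave-one-out expressions built from $(\bMth\by)_i$, $\bMth(i,i)$ and $\sigma^2$, exactly of the kind derived inside the proof of Lemma~\ref{lem:equivalence-smoother} (where a held-out predictive variance was shown to equal $\sigma^2 c/(1-c)$ with $c=\dn^\top\bM_\Y\dn$) and as referenced in \cite{williams2006gaussian}. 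Hence all three criteria take the same value on $\M(\gamma)$ and $\M'(\gamma)$ for every $\gamma$ and $\sigma^2$. Finally, the two criterion curves $\gamma\mapsto C(\M(\gamma))$ and $\gamma\mapsto C(\M'(\gamma))$ coincide, so they share the same minimiser(s) $\gamma^\star$, and $\M(\gamma^\star)\sim\M'(\gamma^\star)$ by the equivalence already established, which is the last assertion of the statement.

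The only delicate point I anticipate is the \texttt{LOO-NLL} case: one must make sure the leave-one-out predictive \emph{variance}, not just the mean, is genuinely a function of $\bMth$ and $\sigma^2$, which is where the block-matrix/Woodbury identity of Lemma~\ref{lem:equivalence-smoother} does the work; the \texttt{LOO-MSE} and \texttt{SURE} parts are essentially a reading of eqs.~\eqref{eq:loo-l2-reexpressed} and~\eqref{eq:SURE}. A minor bookkeeping remark is that the leave-one-out quantities are well-defined only when the reduced designs $\X\setminus\{x_i\}$ remain unisolvent for $\V$, but this is already implicit in the use of these criteria for semi-parametric models.
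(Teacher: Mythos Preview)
Your proposal is correct and follows essentially the same route as the paper: first establish $\M(\gamma)\sim\M'(\gamma)$ for every $\gamma$ (the paper states this is ``easy to check''; you supply the explicit scaling argument via eq.~\eqref{eq:smoother-spm} and Proposition~\ref{prop:pred-eq-smoother-semipar}), then observe that each criterion is determined by quantities that coincide under prediction-equivalence. The only cosmetic difference is that for \texttt{LOO-NLL} the paper argues directly from Definition~\ref{def:pred-equiv-semip} applied to the reduced design $\X\setminus\{x_i\}$ (so that the leave-one-out predictive means and variances are equal by definition), whereas you route the argument through the smoother matrix via the Woodbury identities of Lemma~\ref{lem:equivalence-smoother}; by Proposition~\ref{prop:pred-eq-smoother-semipar} these are equivalent, so there is no substantive gap.
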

 \begin{proof}
   First, it is is easy to check that if $\SPM{l}{\V} \sim \SPM{l'}{\V}$ then
   $\SPM{\gamma l}{\V} \sim \SPM{\gamma l'}{\V}$. 
   The leave-one-out criteria rely on predictive means and variances so the
   result follows directly from the definition of predictive equivalence. For
   the SURE criterion, the result follows because eq. \eqref{eq:SURE} only
   depends on the smoother matrix.
 \end{proof}
 \begin{remark}
   The result can be extended to selection of $\sigma^2$ as well, if $\sigma^2$
   is unknown and selected via eq. \eqref{eq:loo-ll}. 
 \end{remark}

Because the vertical scale hyperparameter $\gamma$ is always present in practice
in a SPM, it is useful to introduce a notion of equivalence ``up to a
constant'':
\begin{definition}
  We say that $\M = \SPM{l}{\V} $ and $\M' = \SPM{l'}{\V'}$ are
  equivalent \emph{up to a constant}, noted $ \M \propto \M'$, if there exists
  $\alpha \in R'$ such that $\SPM{l}{\V} \sim \SPM{\alpha l'}{\V'}$.
\end{definition}
This relaxed form of equivalence also holds post-selection. By prop.
\ref{prop:post-selec-equiv}, if $\gamma^\star$ is the optimal value of $\gamma$
for the family $\M(\varepsilon)$, then $\frac{\gamma^\star}{\alpha}$ is the
optimal value for the family $\M'(\gamma)$, and $\M(\gamma^\star) \sim
\M'(\frac{\gamma^\star}{\alpha})$. 
Therefore, because two models $\M \propto \M'$ define effectively the same family of models up to a
change of scale, we shall use the $\propto$ notation in our results to hide
irrelevant multiplicative factors.

\section{Main result in the univariate case}
\label{sec:results}

%\subsection{Equivalent models in the flat limit (univariate case)}
%\label{sec:equivalent-models-univariate}

We use the notation $a_\varepsilon \fleq \M$ to denote models that become
prediction-equivalent in the flat limit, in the sense that the predictive
distributions of $a_\varepsilon$ converge to that of $\M$. The precise definition
we use is an asymptotic variant of definition \ref{def:pred-equiv-semip}. Here
 $k_\varepsilon$ denotes a family of kernels indexed by a parameter
 $\varepsilon$.

\begin{definition}[Asymptotic prediction-equivalence]
  \label{def:asymptotic-pred-eq}
   $k_\varepsilon$ is said to be asymptotically prediction-equivalent to
  a fixed (semi)-parametric model $\M = \SPM{l}{\V}$ over a domain
  $\Omega \in \R^d$, noted $k_\varepsilon \overset{\flatlim}{\sim} \M$, if $|\V|=|\V'|$, and for any finite $\X
  \subset \Omega$ such that $\bV_\X$ and $\bV'_\X$ have full column rank, for all $x \in \Omega$, $\by \in \R^{|\X|}$, $\sigma^2 \in \R^+$:
  \begin{enumerate}
  \item The predictive expectations are such that:  $\E_{
      k_\varepsilon}(f(x) | \by) = \E_{ \M}(f(x) | \by) + \O(\varepsilon) $
  \item The predictive variances are such that:  $\Var_{ k_\varepsilon}(f(x) |
    \by) = \Var_{ \M}(f(x) | \by) + \O(\varepsilon) $
  \end{enumerate}

  We use the short-hand $k_\varepsilon \fleq \M$ if there exists $\M' \propto
  \M$ such that $k_\varepsilon \overset{\flatlim}{\sim} \M'$.
\end{definition}

We are now ready to state our main result in the one-dimensional case. We look
at models of the form $k_\varepsilon(x,y)\varepsilon^{-p}$, where the vertical
scale hyperparameter ($\gamma$) grows as $\flatlim$. This lets us control the
degrees of freedom of the fit in the flat limit; the higher the value of $p$,
the more degrees of freedom we allow. A more thorough discussion of degrees of
freedom can be found below (section \ref{sec:deg-freedom-flatlim}). What the
equivalent model turns out to be in the flat limit depends on $p$ and $r$, the
regularity of the kernel, and nothing else. 

In the statement of the theorem, $k_\varepsilon$ designates
a family of kernels indexed by a inverse-scale parameter $\varepsilon$, of the
form
\[ k_\varepsilon(x,y)= \kappa(\varepsilon x, \varepsilon y) \]
The required assumptions are:
\begin{enumerate}
\item $\kappa$ is stationary; i.e. there exists $\psi$ such that $\kappa(x,y) =
  \psi\left(|x-y| \right)$
\item $\psi$ is analytic in a neighbourhood of 0. 
\end{enumerate}
The second assumption can be removed, but doing so in general requires handling
non-integer $p$, as we explain in the appendix (section \ref{sec:non-analytic-kernels}).
\begin{theorem}
  \label{thm:equivalent-kernels-1d}
  Let $k_\varepsilon(x,y))$ a family of kernels with
  inverse-scale parameter $\epsilon$, verifying the assumptions above. Let $\kappa$ have regularity parameter $r$, and let $p$ be an integer. 
  Then the following asymptotic equivalence holds:
  \[ k_\varepsilon\varepsilon^{-p} \fleq \SPM{l_p}{\V_p}\]
  where $l_p(x,y)$ and $\V_p$ depend on the interplay between $p$ and $r$. There are four different cases:
  \begin{itemize}
  \item $p<2r-1$ and $p$ is even, \emph{i.e.}, $\exists\, m<r\text{ s.t. } p = 2m$. Then $l(x,y) = x^{m+1}y^{m+1}$ and $\V =
    \{x^0,x^1,\ldots,x^m\}$. This case amounts to penalised polynomial regression.
  \item $p<2r-1$ and $p$ is odd, \emph{i.e.}, $\exists\, m<r-1 \text{ s.t. } p =
    2m+1$. Then $l(x,y) = 0$ and $\V = \{x^0,x^1,\ldots,x^{m+1} \}$. This case
    amounts to unpenalised polynomial regression (eq. \eqref{eq:polyreg}).
\item $p=2r-1$. In this case, $l(x,y) = (-1)^r |x-y|^{2r-1}$ and $\V =
  \{x^0,x^1,\ldots,x^{r-1} \}$, which amounts to smoothing spline regression
  (eq. \eqref{eq:smoothing-splines}). 
  \item $p>2r-1$. This case leads to an interpolant independently of the value
    of  $\sigma^2$. The interpolant is either a spline of degree $r$ 
    or a polynomial (infinite $r$). 
\end{itemize}
\end{theorem}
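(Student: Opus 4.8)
The plan is to reduce the statement entirely to the behaviour of the smoother matrix in the flat limit. By the asymptotic analogue of Proposition~\ref{prop:pred-eq-smoother-semipar} --- which one obtains by combining the characterisation of semi-parametric models as flat limits of non-parametric kernels with the fact, isolated in the proof of Lemma~\ref{lem:equivalence-smoother}, that the predictive mean and variance at $x$ are continuous (in fact rational) functions of the smoother matrix at $\X\cup\{x\}$ --- it suffices to show that for every $\X$ with $\bV_p$ of full column rank, every $\sigma^2>0$ and every $x$, the GP smoother matrix $\bM_\varepsilon=(\varepsilon^{-p}\bK_\varepsilon)\bigl(\varepsilon^{-p}\bK_\varepsilon+\sigma^2\bI\bigr)^{-1}$ at $\X$ satisfies $\bM_\varepsilon=\bM_0+\O(\varepsilon)$, where $\bM_0$ is the semi-parametric smoother matrix~\eqref{eq:smoother-spm} of $\SPM{\alpha l_p}{\V_p}$ for some $\alpha>0$; the $\propto$ hidden in $\fleq$ then absorbs $\alpha$.

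To get hold of $\bM_0$ I would expand the kernel matrix using analyticity: writing $\psi(t)=\sum_{k\ge0}a_k t^k$ gives $\varepsilon^{-p}\bK_\varepsilon=\sum_{k\ge0}a_k\,\varepsilon^{\,k-p}\bD^{(k)}$ with $\bD^{(k)}=\bigl[\,|x_i-x_j|^{k}\,\bigr]$, and the regularity parameter forces $a_1=\dots=a_{2r-3}=0$ and $a_{2r-1}\ne0$ (all odd coefficients vanishing when $r=\infty$). The two kinds of distance matrices behave oppositely. By the binomial expansion of $(x_i-x_j)^{2j}$, each even power $\bD^{(2j)}$ has column space inside $\mspan\{\bv_0,\dots,\bv_{2j}\}$, where $\bv_k=(x_1^k,\dots,x_n^k)^\top$, and, once $\bv_0,\dots,\bv_{j-1}$ are projected away, it collapses to a rank-one matrix along the component of $\bv_j$ orthogonal to $\mspan\{\bv_0,\dots,\bv_{j-1}\}$; whereas the first odd power $\bD^{(2r-1)}$ is conditionally positive definite with respect to $\mspan\bV_{<r}$, so the compression of $(-1)^r\bD^{(2r-1)}$ to $(\mspan\bV_{<r})^{\perp}$ is positive definite and generically of full rank $n-r$. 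This is exactly the staircase/Wronskian structure of appendix~\ref{sec:wronskian-spectral}, and feeding it into the flat-limit spectral results of \cite{BarthelmeUsevich:KernelsFlatLimit} one finds that $\bK_\varepsilon$ has eigenvalues of orders $\varepsilon^{0},\varepsilon^{2},\dots,\varepsilon^{2(r-1)}$ --- one per level, with eigenvectors converging to point-evaluations of an orthogonal polynomial basis, the top $j+1$ of them spanning $\mspan\bV_{\le j}$ in the limit --- followed by $n-r$ eigenvalues of order $\varepsilon^{2r-1}$, with eigenvectors converging to a polyharmonic-spline basis of $(\mspan\bV_{<r})^{\perp}$.

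The next step is scale separation. After multiplication by $\varepsilon^{-p}$ the $i$-th polynomial level of $\varepsilon^{-p}\bK_\varepsilon$ has order $\varepsilon^{2i-p}$ and each spline level has order $\varepsilon^{2r-1-p}$; evaluating $\mu/(\mu+\sigma^2)$ level by level, a level with negative $\varepsilon$-exponent contributes the identity (it becomes unpenalised, hence a parametric basis function), a level with exponent exactly $0$ contributes $c/(c+\sigma^2)$ for an explicit constant $c$ read off from the leading Taylor coefficient and the relevant norms and binomial factors, and a level with positive exponent contributes $0$. Passing to the limit using the convergence of eigenprojectors yields $\bM_0=\bQ\bQ^\top+\tbL(\tbL+\sigma^2\bI)^{-1}$, with $\bQ$ an orthonormal basis of $\mspan\bV_p$ and $\tbL=(\bI-\bQ\bQ^\top)\bL(\bI-\bQ\bQ^\top)$, which by~\eqref{eq:smoother-spm} is exactly a semi-parametric smoother; comparing pieces then identifies $\V_p$ (the negative-exponent levels) and $l_p$ (the single zero-exponent level, up to $\alpha$). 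The four cases are just the four sign patterns of the exponents $2i-p$ and $2r-1-p$: $p<2r-1$ even puts the zero-exponent level on an even polynomial rank-one direction (penalised polynomial regression); $p<2r-1$ odd leaves no zero-exponent level at all (unpenalised polynomial regression, $l_p=0$, eq.~\eqref{eq:polyreg}); $p=2r-1$ puts the zero-exponent level on the whole spline block $(-1)^r\bD^{(2r-1)}$ (smoothing spline regression, eq.~\eqref{eq:smoothing-splines}); and $p>2r-1$ makes every level unpenalised, so $\bM_0=\bI$ for all $\sigma^2$, i.e.\ interpolation --- by a spline of degree $r$ (eq.~\eqref{eq:saddlepoint-interp}) or, when $r=\infty$ and $\X$ is finite, by a polynomial.

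The hard part, I expect, is the scale-separation step itself: transferring the perturbative picture from $\bK_\varepsilon$ to the \emph{rescaled, regularised} matrix $\varepsilon^{-p}\bK_\varepsilon+\sigma^2\bI$, with a genuine $\O(\varepsilon)$ (not merely $o(1)$) remainder, and pinning down the leading constant $c$ of the single zero-exponent level --- that constant is what selects $l_p$ exactly, and only in the spline case does one genuinely need the $\propto$ slack, to absorb $|a_{2r-1}|$ and the sign $(-1)^r$. This is where the Wronskian-matrix lemmas and the precise eigenvector asymptotics of \cite{BarthelmeUsevich:KernelsFlatLimit} carry the argument. Two lesser points also need attention: point sets with too few points for the nominal $\V_p$ are excluded by the full-column-rank hypothesis, but one should check that the case boundaries then degenerate gracefully (a penalised regression with no degrees of freedom left to penalise collapses to interpolation); and analyticity is precisely what keeps all exponents integral, the non-analytic case of appendix~\ref{sec:non-analytic-kernels} requiring non-integer $p$ and a finer accounting.
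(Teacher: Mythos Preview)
Your proposal is correct and follows essentially the same route as the paper: reduce asymptotic prediction-equivalence to convergence of smoother matrices via Lemma~\ref{lem:equivalence-smoother} and Proposition~\ref{prop:pred-eq-smoother-semipar}, then use the flat-limit spectral asymptotics of \cite{BarthelmeUsevich:KernelsFlatLimit} (eigenvalue orders $\varepsilon^{0},\varepsilon^{2},\dots,\varepsilon^{2(r-1)}$ followed by a block at $\varepsilon^{2r-1}$, with eigenvectors converging to the $\bQ$-columns and then the $\tbU$-basis of $(\mspan\bV_{<r})^\perp$) to read off $\bM_0$ case by case from the filter function $\mu\mapsto\mu/(\mu+\sigma^2)$. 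Your explicit remark that the even-power distance matrices collapse to rank one after projection, while $(-1)^r\bD^{(2r-1)}$ stays full rank on $(\mspan\bV_{<r})^\perp$, is exactly the mechanism behind the paper's Theorem~\ref{eigenflat1d:th}, and your identification of the ``hard part'' (the $\O(\varepsilon)$ remainder and the leading constant, which the paper offloads to \cite{BarthelmeUsevich:KernelsFlatLimit} via Rellich's theorem) is accurate.
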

\begin{proof}
  A full proof is given in appendix \ref{sec:proof-of-univar-result}. The bulk of the proof consists in
  obtaining the limit of the smoother matrix corresponding to model
  $k_\varepsilon\varepsilon^{-p}$ in $\flatlim$. We can work this out from the
  results in \cite{BarthelmeUsevich:KernelsFlatLimit}, which provide the
  asymptotic eigenvalues and eigenvectors of $\bK_\varepsilon$. 
  From this we obtain an expression for the smoother matrix as
  $\bM_\varepsilon=\bM_0+\O(\varepsilon)$ where $\bM_0$ depends on $p$ and $r$.
  The proof is completed by appealing to lemma \ref{lem:equivalence-smoother}
  and proposition \ref{prop:pred-eq-smoother-semipar}, which allows us to
  deduce equivalence of models from equality of smoother matrices. 

  Examining the
  proof of lemma \ref{lem:equivalence-smoother} and proposition
  \ref{prop:pred-eq-smoother-semipar}, we see that the predictive mean and
  variance depend smoothly on $\bM_\varepsilon$ and so we have asymptotic
  predictive equivalence in the sense of definition
  \ref{def:asymptotic-pred-eq}. The different cases are the different cases for
  smoother matrices. 
\end{proof}
\begin{example}
  If $k_\varepsilon(x,y)$ is the exponential kernel, which has $r=1$, then $\varepsilon^{-1}
  k_\varepsilon(x,y)$ is asymptotically equivalent to the model $\SPM{-|x-y|}{1}$, {\it i.e.} the
  parametric part is the constant function and the non-parametric part is the
  kernel $l(x,y)=-|x-y|$. By eq. \eqref{eq:conditional-exp-semi-parametric}, it implies that 
  $\hat{f} = E(f|\by)$ tends to:
  \[ \hat{f}(x) = -\sum_{i=1}^n \beta_i |x-x_i| + \alpha + \O(\varepsilon)\]
  The kernel functions $|x-x_i|$ are piecewise linear and so the fit goes in the
  limit to a linear spline. Recall that $\vect{\beta}$ is constrained: $\bV^\top
  \vect{\beta} = 0$, which here simplifies to  $\sum \beta_i = 0$. This sets the
  boundary  conditions, as one may easily check by looking at derivatives
  outside the range of the data: $\frac{d}{dx}\hat{f}(x) = 0$ if $x$ is to
  the left or right of the observations $\X$, so that the fit has a built-in,
  implicit Neumann boundary condition. 
  Generalising further, if $k_\varepsilon(x,y)$ is a kernel with finite $r$, and
  setting $p=2r-1$, $\varepsilon^p k_\varepsilon(x,y)$ is asymptotically
  equivalent to the model $S =\SPM{(-1)^r |x-y|^{2r-1}}{\{1,x,\ldots,x^{r-1}\}}$,
  which leads to the asymptotic fit:
  \[ \hat{f}(x) = (-1)^r \sum_{i=1}^n \beta_i |x-x_i|^{2r-1} + \sum_{j=0}^{r-1}
    \alpha_j v^j + \O(\varepsilon)\]
  with the constraint $\bV^\top\vect{\beta} = 0$.
  % Note that if $n = r+1$ then the
  % constraint imposes that $\beta = 0$,
\end{example}

The multivariate counterpart of theorem \ref{thm:equivalent-kernels-1d} can be
found in section \ref{sec:multivariate}. The multivariate theorem resembles the
univariate one, but requires quite a bit of notation. Instead of going directly
to multivariate equivalent models, we take a look instead at degrees of freedom and hyperparameter selection in
the flat limit.

\section{Degrees of freedom, hyperparameter selection, and practical consequences}
\label{sec:deg-freedom-flatlim}

In this section we study the behaviour of the degrees of freedom, and the various hyperparameter selection
methods as $\flatlim$. 
All results are applicable to the multivariate case even
though the numerical examples concern the univariate case. 

Since degrees of freedom play such an important role in hyperparameter
selection, we also look at their asymptotics and show that scaling $\gamma$ as
$\gamma_0\varepsilon^{-p}$ for some well-chosen $\gamma_0$ and $p$ keeps the
degrees of freedom constant as $\flatlim$. We stress two practical implications
of these results.

First, for certain datasets, very low values of $\varepsilon$ may be appropriate
or even optimal (in terms of prediction performance). In section
\ref{sec:flat-limit-solutions} we show an empirical example of this phenomenon,
where solutions at very low values of $\varepsilon$ are selected in the
hyperparameter selection procedure, because the data contain a linear trend.

Second, that there \emph{is} a good solution in small $\varepsilon$ may not be
visible to practitioners, because common practice is to use a ``nugget term'' (a
diagonal perturbation to the kernel matrix). As we show in section
\ref{sec:nuggets} the use of a nugget term increases numerical stability, at the
cost of distorting the results of hyperparameter selection.

We would argue that a good alternative for many datasets is to directly use the
flat limit models instead. In particular, splines are well-established, work
well in low dimensions, and benefit from solid implementations (in the R mgcv
package, for instance, \cite{wood2006generalized}).

It turns out that flat-limit models can approximate the resuls of a GP
fit even when $\varepsilon$ is relatively large. In section
\ref{sec:towards-practical} we use the theoretical results on degrees of freedom
to formulate a ``matched approximation'' for a given GP model. The matched
approximation to a kernel $k_\varepsilon$ is the flat limit model with the same
regularity and degrees of freedom for the measurement locations $\X$. An
interpretation of theorem \ref{thm:equivalent-kernels-1d} is that the matched
approximation becomes exact as $\flatlim$. The empirical results we obtain show
that the matched approximation is sometimes very good, especially for Matérn
models. 

Finally, the flat limit is also valid in more complicated models with
non-Gaussian likelihoods, and observations that depend on general linear
functionals of $f$ (rather than just pointwise evaluation). We show such an
example in section \ref{sec:non-gaussian}. 

\subsection{Degrees of freedom and isofreedom curves}
\label{sec:isofreedom}

\begin{figure}
  \centering
  \includegraphics[width=14cm]{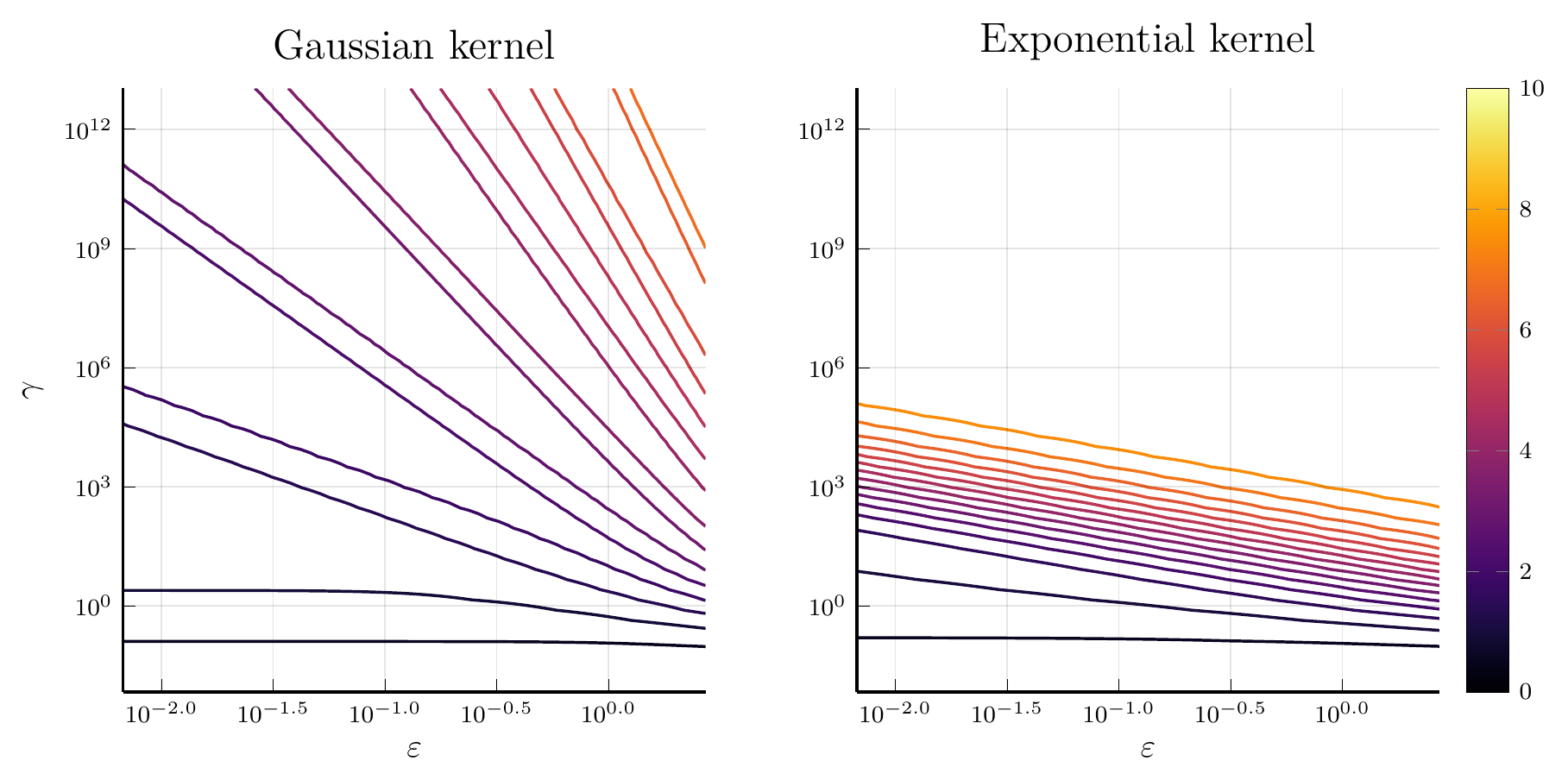}
  \caption{Isofreedom curves (see section \ref{sec:isofreedom}) of a smoother matrix as a function of
    $\varepsilon$ and $\gamma$. The set $\X$ was generated by sampling $n=8$
    points from the unit interval. We show the contours of $\Tr(\gamma\bK_\varepsilon(\gamma\bK_\varepsilon
    +\sigma^2 \bI)^{-1})$ as a function of $\gamma$ and $\varepsilon$ for the
    Gaussian and exponential kernels. Notice that the contours become linear in
    small $\varepsilon$. 
    }
  \label{fig:deg-freedom}
\end{figure}
As explained in section \ref{sec:deg-freedom}, the degrees of freedom of a
linear smoother measure in some sense the ``dimension'' of the range of the
smoother matrix. Given a GP model defined by a kernel $k$ and measurements at
$\X$, the degrees of freedom typically increase with larger $\varepsilon$ and
larger $\gamma$. We show an example in figure \ref{fig:deg-freedom}, where the degrees of freedom
are displayed as a function of $\gamma$ and $\varepsilon$ for a randomly drawn
point set $\X$ in $\R$. 
Because degrees of freedom decrease as $\flatlim$, in section
\ref{sec:results} we let $\gamma$ increase as $\flatlim$ so
that a nontrivial smoother matrix could arise in the limit. The particular form
chosen is $\gamma = \gamma_0\varepsilon^{-p}$, which looks like a choice of
convenience but actually has a deeper motivation. 
What one might notice on figure \ref{fig:deg-freedom}, which has log-log axes, is that the contours
become lines in small $\varepsilon$. We dub these contours ``iso-freedom
curves'', because they correspond to sets of the form
\[ \mathcal{F}_m= \left\{(\varepsilon,\gamma)| \Tr(\gamma\bK_\varepsilon(\gamma\bK_\varepsilon
      +\sigma^2 \bI)^{-1}) = m \right\}\]
for fixed values of $m$.
Given $m$, $\sigma^2$ and $\varepsilon$, we can solve for the value of $\gamma$
such that the degrees of freedom equal $m$. $\gamma$ should verify:
\begin{align*}
  \Tr(\gamma\bK_\varepsilon(\gamma\bK_\varepsilon
  +\sigma^2 \bI)^{-1}) = m \\
  \Leftrightarrow \sum \frac{\gamma\lambda_i(\varepsilon)}{\gamma\lambda_i(\varepsilon)+\sigma^2} - m = 0 \numberthis \label{eq:scaling-function}
\end{align*}
Eq. (\ref{eq:scaling-function}) is a rational equation in $\gamma$, and the
eigenvalues of $\lambda_i(\varepsilon)$ are analytic in $\varepsilon$. Call
$\gamma_m(\varepsilon)$ the solution of eq. (\ref{eq:scaling-function}) as a
function of $\varepsilon$, and note that it is a parametrisation of the
iso-freedom curve, giving $\gamma$ as a function of $\varepsilon$.
The Newton-Puiseux theorem implies that
$\gamma_m(\varepsilon)$ can be expanded as a Puiseux series in small
$\varepsilon$ (see \cite{barthelme2023determinantal}), {\it i.e.} that there exist
$\gamma_0,l \in \ZZ, s \in \ZZ^+$ such that:
\begin{equation}
  \label{eq:puiseux-series}
  \gamma_m(\varepsilon) = \varepsilon^{\frac{l}{s}}(\gamma_0 + \O(\varepsilon^{\frac{1}{s}}) )
\end{equation}
A Puiseux series is just a power series in $\varepsilon^{\frac{1}{s}}$, and if
$s=1$ it is actually a power series. Notice that $\log \gamma_m(\varepsilon)
\approx \frac{l}{s} \log \varepsilon + \log \gamma_0$, which explains why the
iso-freedom curves look linear in log-log coordinates. 
In equation \eqref{eq:puiseux-series},
$l,s$ and $\gamma_0$ depend on $m$ (the desired number of degrees of freedom),
and the kernel function. They can actually be determined in closed-form using
the Newton polygon \cite{moro2002first}, but that would carry us outside the
scope of this manuscript. Among other things, it is not too hard to show that
$s=1$ here, so that the iso-freedom curves have integer slopes in small
$\varepsilon$. 

\subsection{Hyperparameter selection in the flat limit}
\label{sec:hyperpar-selection-flatlim}

In section \ref{sec:hyperpar-selec}, we introduced three hyperparameter
selection methods: the SURE criterion, and two criteria based on leave-one-out
cross-validation. Given the results above, one can verify that all three
criteria are \emph{constant in $\flatlim$ along isofreedom lines}. Figure \ref{fig:hyperpar-sel}
gives a visual illustration of this fact.
\begin{figure}
  \label{fig:illus-hypersel}
  \centering
  \includegraphics[width=14cm]{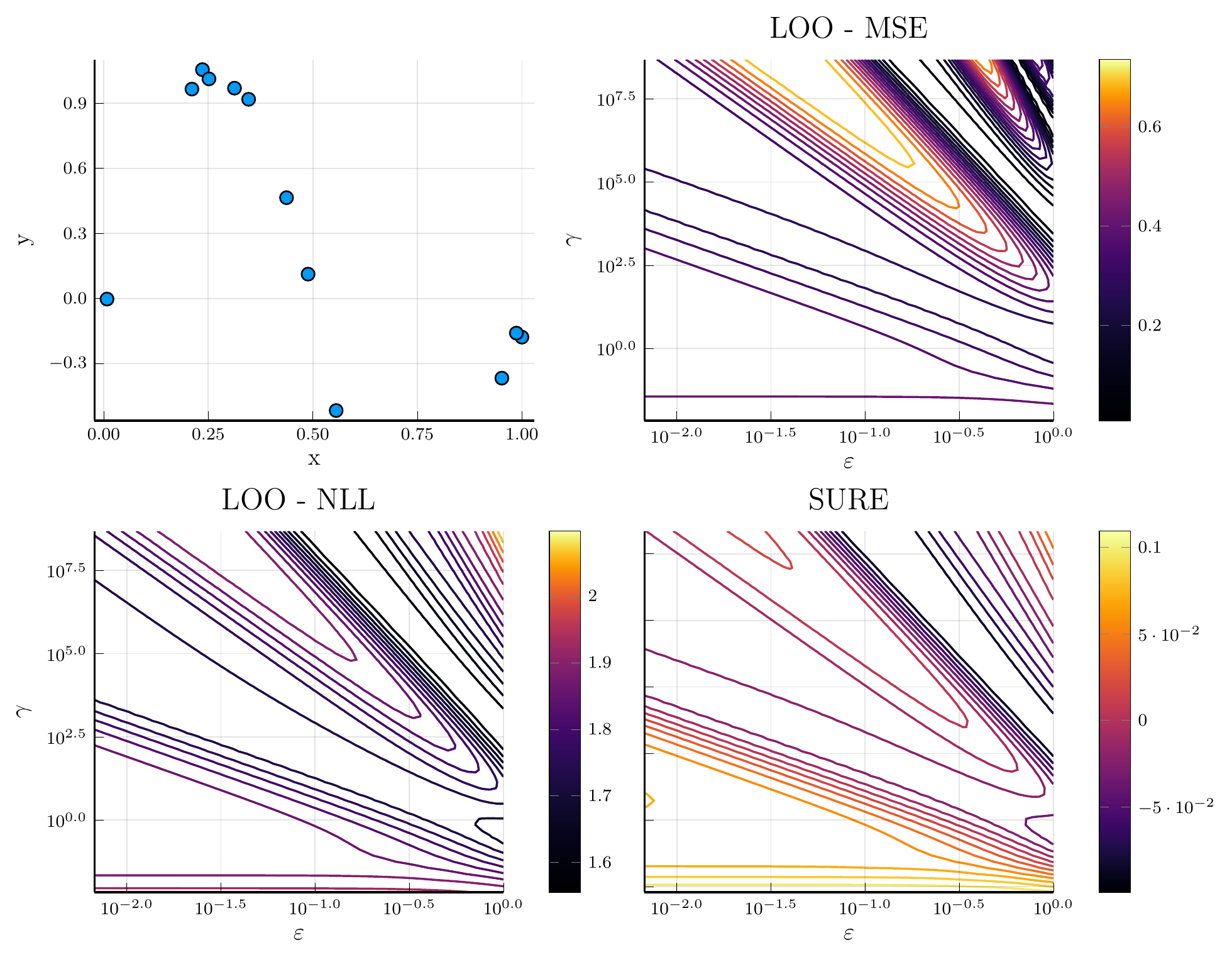}
  \caption{Asymptotics of hyperparameter selection. A synthetic dataset is shown in the upper-left panel. The three other
    panels are contour plots for three different hyperparameter selection criteria.
    We set $ \sigma^2 = (0.1)^2$ and used a Gaussian kernel. Note that these
    three criteria are mostly constant along lines, which have asymptotically
    constant degrees of freedom (see fig. \ref{fig:deg-freedom}). }
  \label{fig:hyperpar-sel}
\end{figure}

To see why the three criteria are asymptotically constant along iso-freedom
lines, consider theorem \ref{thm:equivalent-kernels-1d} and equation
\eqref{eq:puiseux-series} jointly. We shall state the result informally.
%[NB do I really need to do a full proof here? Feels like overkill].
Following a contour with constant degrees of
freedom to the limit $\flatlim$, we need to set $\gamma(\varepsilon) =
\varepsilon^{-p}(\gamma_0 + \O(\varepsilon))$ (by eq.
\eqref{eq:puiseux-series}). This is identical in $\flatlim$ to setting
$\gamma(\varepsilon) = \varepsilon^{-p}\gamma_0$ (up to negligible terms), and
we may apply theorem \ref{def:asymptotic-pred-eq}, which tells us that the
predictive mean and variance converge to finite quantities. It is clear from the
formulas of the two leave-one-out criteria (eq. \eqref{eq:loo-l2} and
\eqref{eq:loo-ll}) that they must then converge to finite quantities as well.
The SURE criterion (eq. \eqref{eq:SURE}) must converge as well since the degrees
of freedom are asymptotically constant and the smoother matrix converges. 

\subsection{Flat-limit solutions are sometimes optimal}
\label{sec:flat-limit-solutions}

One implication of the fact that selection criteria do not diverge is that, for
some datasets, the \emph{optimal solutions may be in small $\varepsilon$}. This
is the case if the data contain strong polynomial trends that become unpenalised
in the flat limit. For instance, when using a kernel with regularity order
$r=3$, trends up to quadratic order are unpenalised in the flat limit. If such a
trend is present in the data, and the signal-to-noise ratio is sufficiently low,
then the flat limit solution may be optimal.

Let us offer a concrete example of this phenomenon. In this example the true
latent function is the sum of a sinusoid and a linear trend, specifically:
\[ f(x) = 0.1 \sin(2 \pi x) + x\]
We use a $r=3$ Matérn kernel, and perform hyperparameter selection for
$\varepsilon$ and $\gamma$ the classical
way, using numerical optimisation. We use box constraints to constrain the
search to regions where the matrices can be inverted, but to reflect normal
practice we use standard floating point arithmetic and not arbitrary-precision.
Since hyperparameter-selection criteria are known to have multiple minima, we
use 10 different random initialisations for the optimisation. We show the
results for the $C_{\mathrm{ll}}$ criterion (eq. \eqref{eq:loo-ll}), but similar
results hold for the other two criteria.

The results appear on fig. \ref{fig:linear-trend-high} and \ref{fig:linear-trend-low}. If the noise level is larger than a certain
threshold (depending on $n$), then a single optimum shows up, with a very low
value of $\varepsilon$. It corresponds to fitting just the linear trend. At
intermediate signal-to-noise ratios, two optima are present, one that fits just
the linear trend, and one that tries to follow the sinusoid as well. Finally, at
low noise, only the latter is present. It is noteworthy that in these
simulations \emph{both} minima are at low values of $\varepsilon$, showing that
flat-limit solutions can indeed emerge in practice. 

\begin{figure}
  \centering
  \includegraphics[width=14cm]{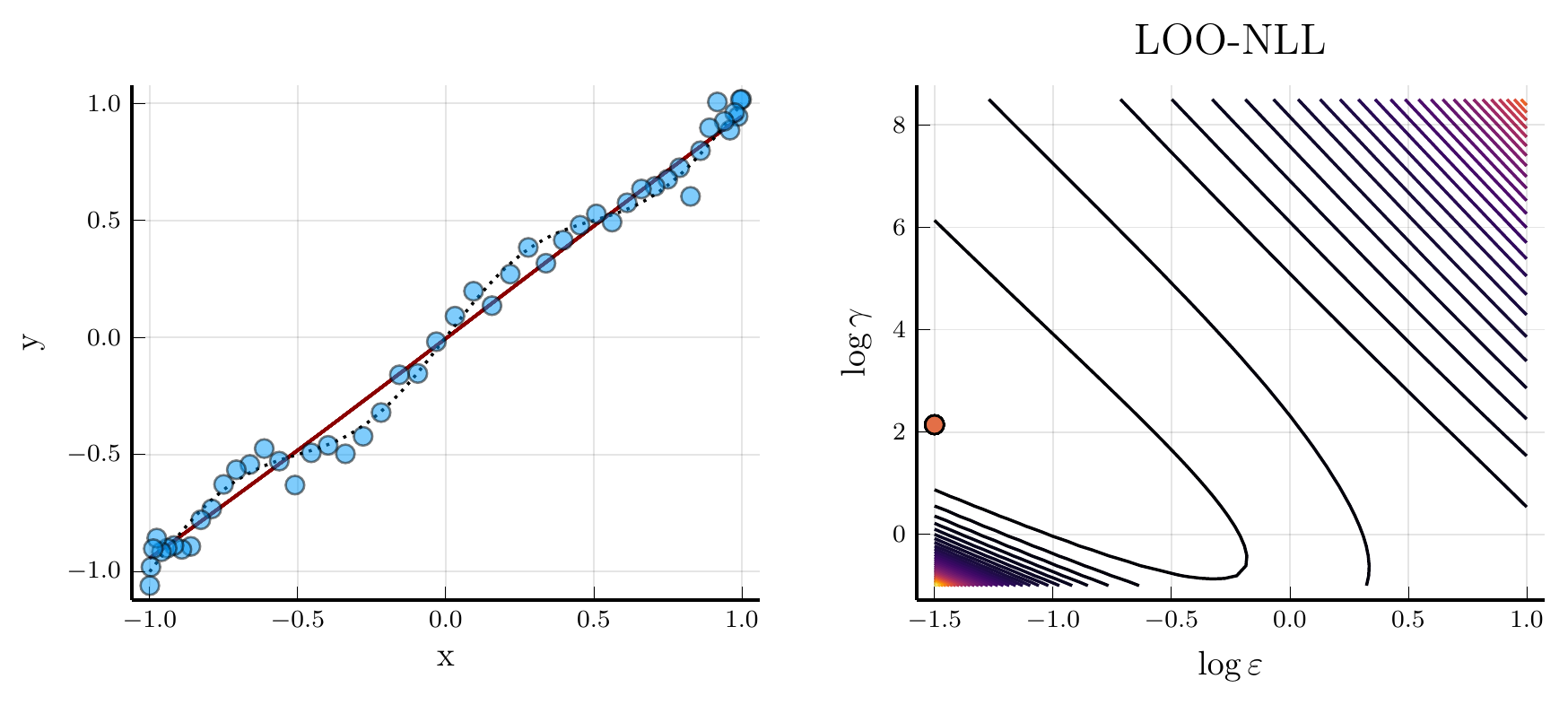}
  \caption{Flat-limit solutions can be optimal in certain scenarios. The data
    are shown on the left in light blue, the true function is the dotted black
    line, and the GP fit corresponding to the selected hyperparameters is in
    red. The contour plot represents the optimisation landscape for criterion
    $C_{\mathrm{ll}}$. The optimum was obtained numerically and is shown as the
    dot. }
  \label{fig:linear-trend-high}
\end{figure}

\begin{figure}
  \centering
  \includegraphics[width=14cm]{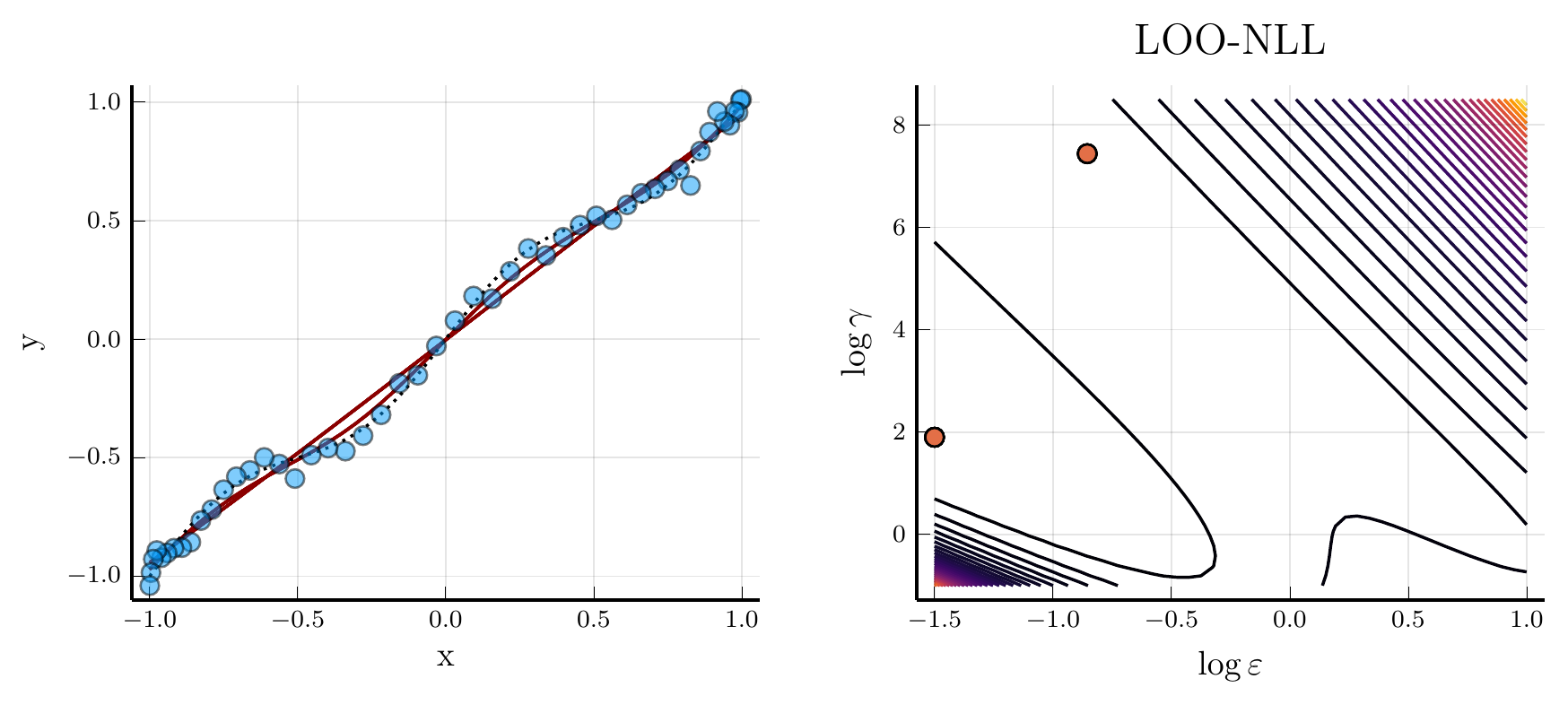}
  \caption{Same as in fig. \ref{fig:linear-trend-high}, but with lower noise
    variance. Here two different optima emerge, corresponding to a linear fit
    for the one, and to a fit that follows the sinusoid for the other.  }
  \label{fig:linear-trend-low}
\end{figure}

\subsection{Why low $\varepsilon$ solutions are frequently invisible in practice}
\label{sec:nuggets}

However, these solutions may be invisible or unattainable when using naïve
numerical methods, especially when the Gaussian kernel is used. The main source of numerical difficulty arises when computing
the smoother matrix:
\[ \bM_\varepsilon =
  \bK_\varepsilon(\bK_\varepsilon+\frac{\sigma^2}{\gamma} \bI)^{-1} \]
Since $\gamma$ becomes very large as $\flatlim$, $\frac{\sigma^2}{\gamma}$ is
small, and one must invert a poorly conditioned matrix. A Cholesky decomposition
in standard floating point precision may fail, so that the small-$\varepsilon$
part of the space is inaccessible. 
In practice sometimes a ``nugget term'' is used to alleviate numerical
difficulties: one replaces  $\bK_\varepsilon$ with $\bK_\varepsilon+\nu\bI$,
where $\nu$ is small. However, once the nugget term is added, increasing
$\gamma$ beyond $\nu$ has no effect. Some useful eigenvectors are made invisible
by the nugget term and this has the effect of ``clipping'' the surface of
hyperparameter selection criteria, as shown in fig. \ref{fig:nugget}. Since this
is clearly undesirable, a better option in the future may be to adapt existing
methods for stable RBF interpolation ({\it e.g.}, \cite{fornberg2011stable,fornberg2008stable}) to GP regression
problems.

\begin{figure}
  \centering
  \includegraphics[width=14cm]{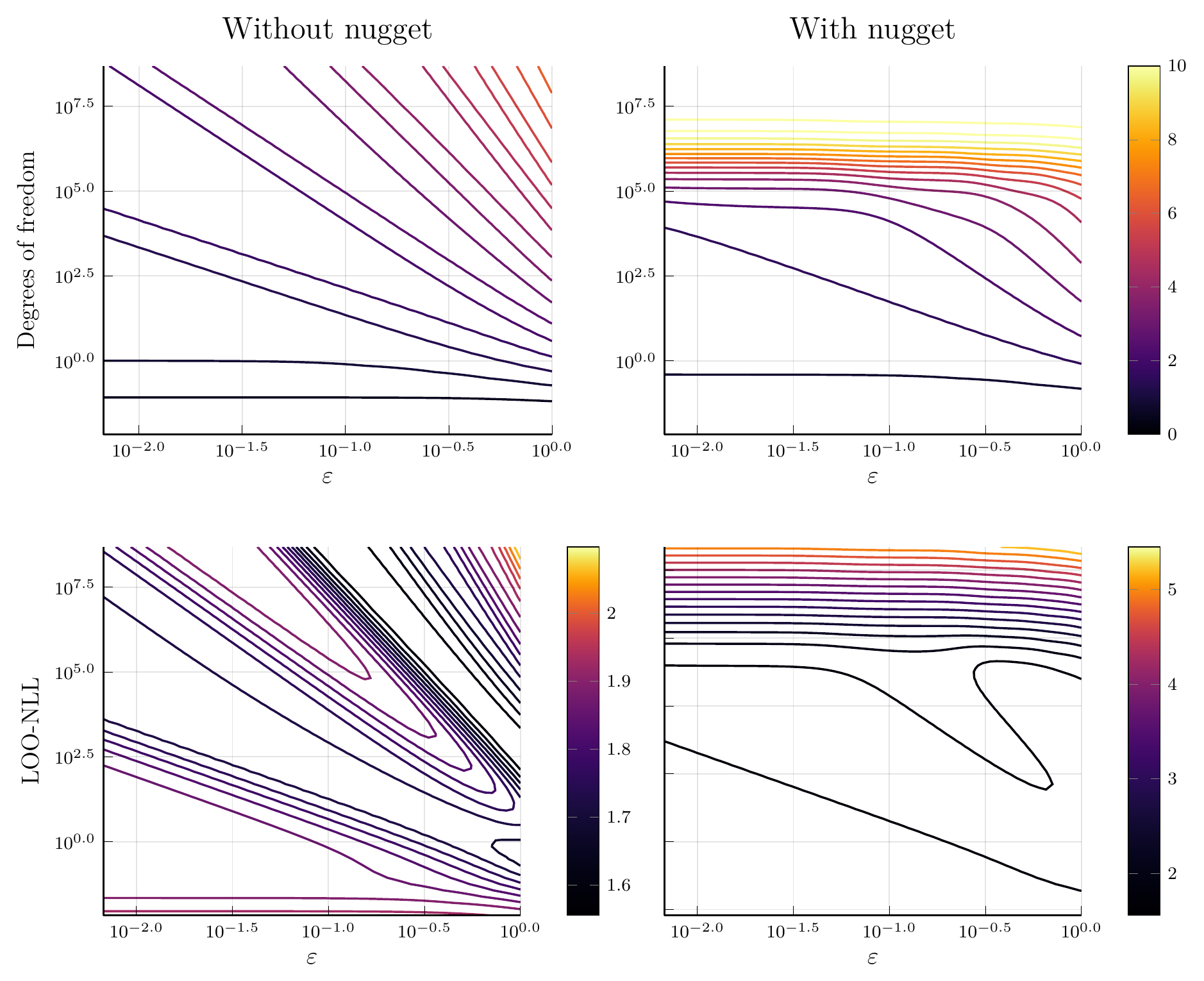}
  \caption{Effect of a nugget term on hyperparameter selection as $\flatlim$. A
    ``nugget term" is a small multiple of the identity that is added to the
    kernel matrix to get around numerical issues. We show the degrees of freedom
    (top row)
    and the LOO-NLL criteria (bottom row) with and without the nugget term (left
    and right columns). The nugget term
    equals $10^{-6} \bI$, we use the Gaussian kernel and the same data as in
    figure \ref{fig:hyperpar-sel}. }
  \label{fig:nugget}
\end{figure}

\subsection{Towards practical approximations}
\label{sec:towards-practical}

Our limit results are not directly applicable when faced with the question:
``what is a useful approximation of a particular GP model at a particular value
of $\varepsilon$''? We do not claim to have a universal recipe, but we shall
present in this section a particular approximation that gives surprinsingly good results in certain cases. 

This approximation is best understood graphically. We take as input a certain
kernel function, and a certain value for $\sigma^2$, $\gamma$ and $\epsilon$. We
can think of it as occupying a certain position in the space of hyperparameters
as shown on figure \ref{fig:deg-freedom} or \ref{fig:hyperpar-sel} for example.
The approximation we suggest, which we call the \emph{matched approximation},
consists in following the iso-freedom line from that point to $\flatlim$.
Following theorem \ref{thm:equivalent-kernels-1d}, the matched approximation
will be either a polynomial or a spline regression, with the same number of
degrees of freedom as the original GP regression. The process is illustrated
graphically in figure \ref{fig:illus-matched-approx-gaussian} and
\ref{fig:illus-matched-approx-matern} for two different kernels.

Let us sketch a concrete algorithm for kernels with $r=\infty$ and degrees of
freedom set to $m \in \R^+$.  By theorem
\ref{thm:equivalent-kernels-1d}, the equivalent semiparametric models are of the
form $S= \SPM{ \gamma x^{p}y^{p} }{\{x^0,\ldots,x^{p-1}\}}$ for some degree $p$.
The corresponding fit will have between $p$ and $p+1$ degrees of freedom, where
the former is attained with $\gamma=0$ and the latter with $\gamma \rightarrow
\infty$. We therefore need to set $p=\floor{m}$ and adjust $\gamma$ such that
the $m$-th eigenvalue of the smoother matrix equals $m-p$. A similar algorithm
applies for finite $r$. 

The matched approximation is illustrated on figs.
\ref{fig:illus-matched-approx-gaussian} to  \ref{fig:matched-results-gp-matern},
for kernels with different regularities, and for different values of
$\varepsilon$. For the Matérn kernel the quality of the approximation is
excellent even though $\varepsilon=2$; it is hard to account for this fact in
our current perturbative framework. 

\begin{figure}
  \centering
  \includegraphics[width=14cm]{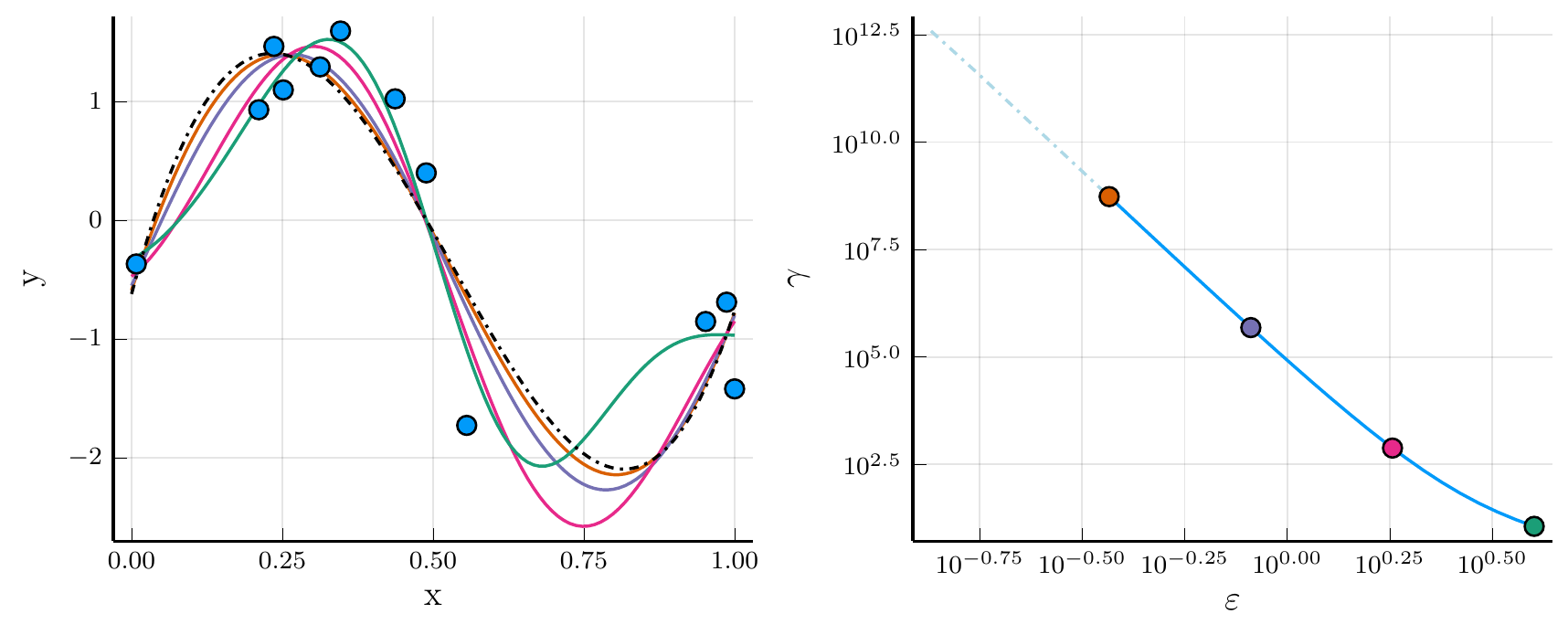}
  \caption{The matched approximation. The fit in green on the left-hand panel is
    a GP regression with $\varepsilon=4$ and 5 degrees of freedom. The kernel is
    Gaussian. The other coloured curves are other models along the isofreedom
    curve, with lower values of $\varepsilon$. The isofreedom curve is shown on
    the right-hand panel. The matched approximation is the limit obtained by
    following the isofreedom curve all the way to $\flatlim$. The corresponding
    fit is the dashed black curve on the left. See fig.
    \ref{fig:illus-matched-approx-matern} for the same thing with a Matern
    kernel.   }
  \label{fig:illus-matched-approx-gaussian}
\end{figure}

\begin{figure}
  \centering
  \includegraphics[width=14cm]{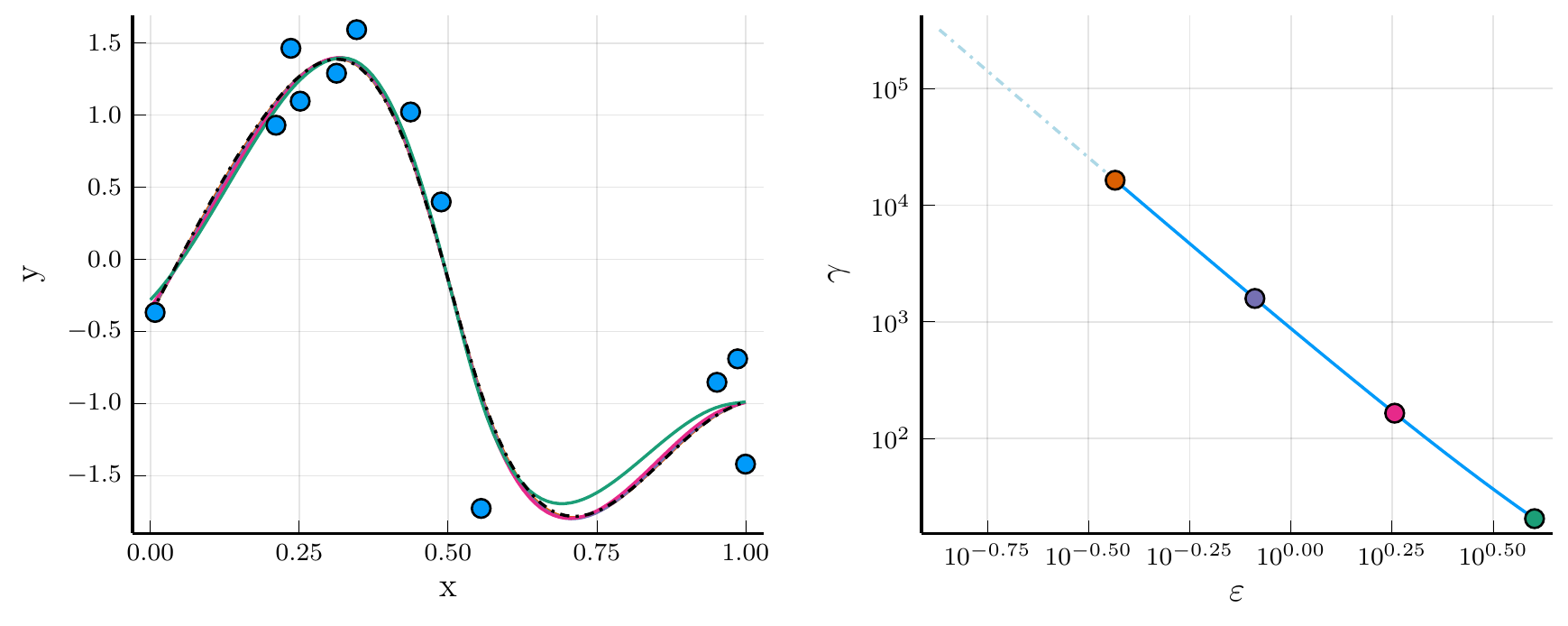}
  \caption{ Same as fig. \ref{fig:illus-matched-approx-gaussian}, but the kernel
  used is a Matérn kernel with regularity order $r=2$. The other parameters are
  identical. Note that the matched approximation fit is now very close to the
  original fit (even though the original fit has $\varepsilon=4$).}
  \label{fig:illus-matched-approx-matern}
\end{figure}

\begin{figure}
  \centering
  \includegraphics[width=14cm]{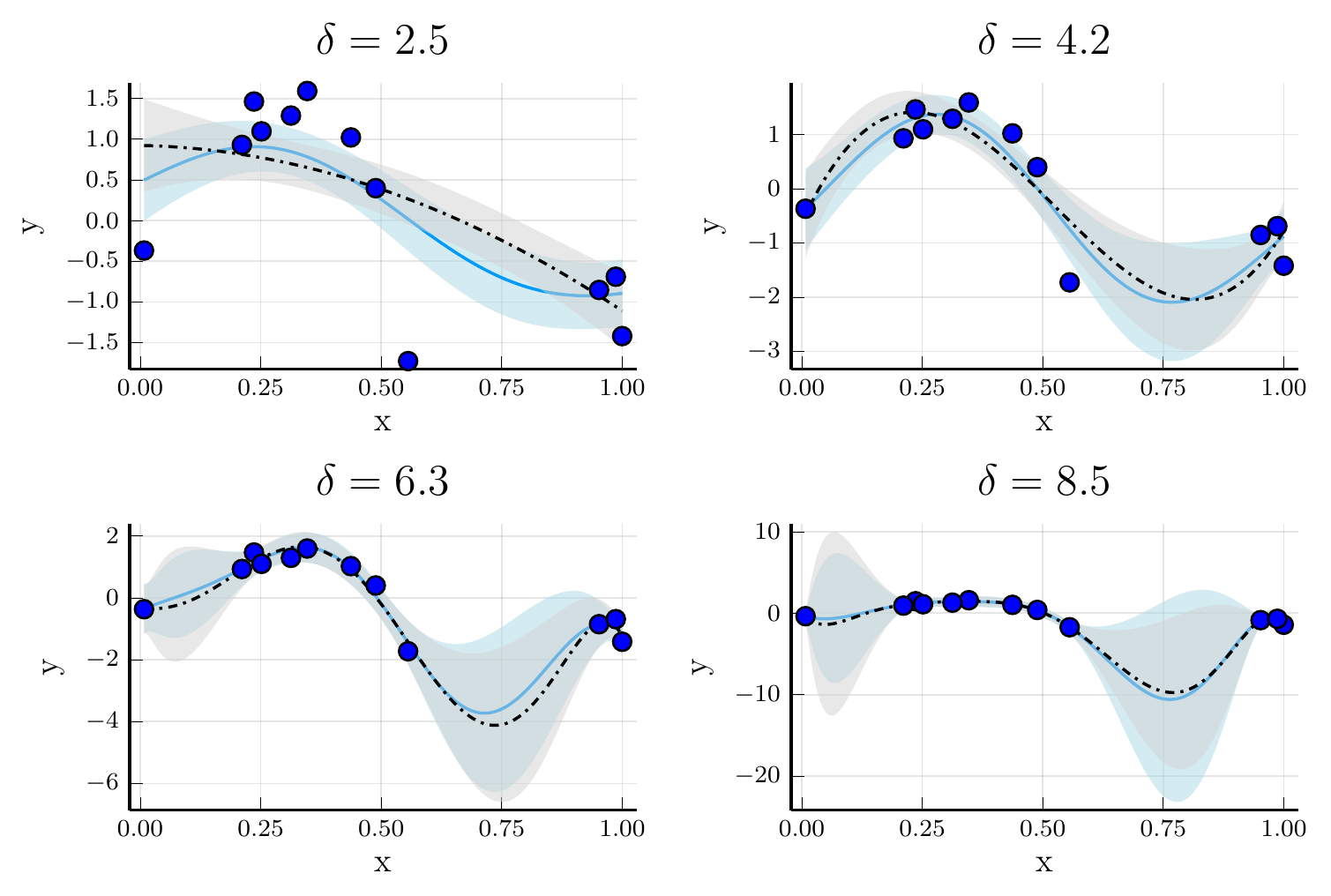}
  \caption{GP regression compared to its matched approximation (dashed curve). Here we use the
    Gaussian kernel and $\varepsilon=2$. The bands around the fit show $\pm$
    the standard deviation of the predictive distribution. We denote by $\delta$ the degrees of freedom for the
  different fits.}
  \label{fig:matched-results-gp-gauss}
\end{figure}

\begin{figure}
  \centering
  \includegraphics[width=14cm]{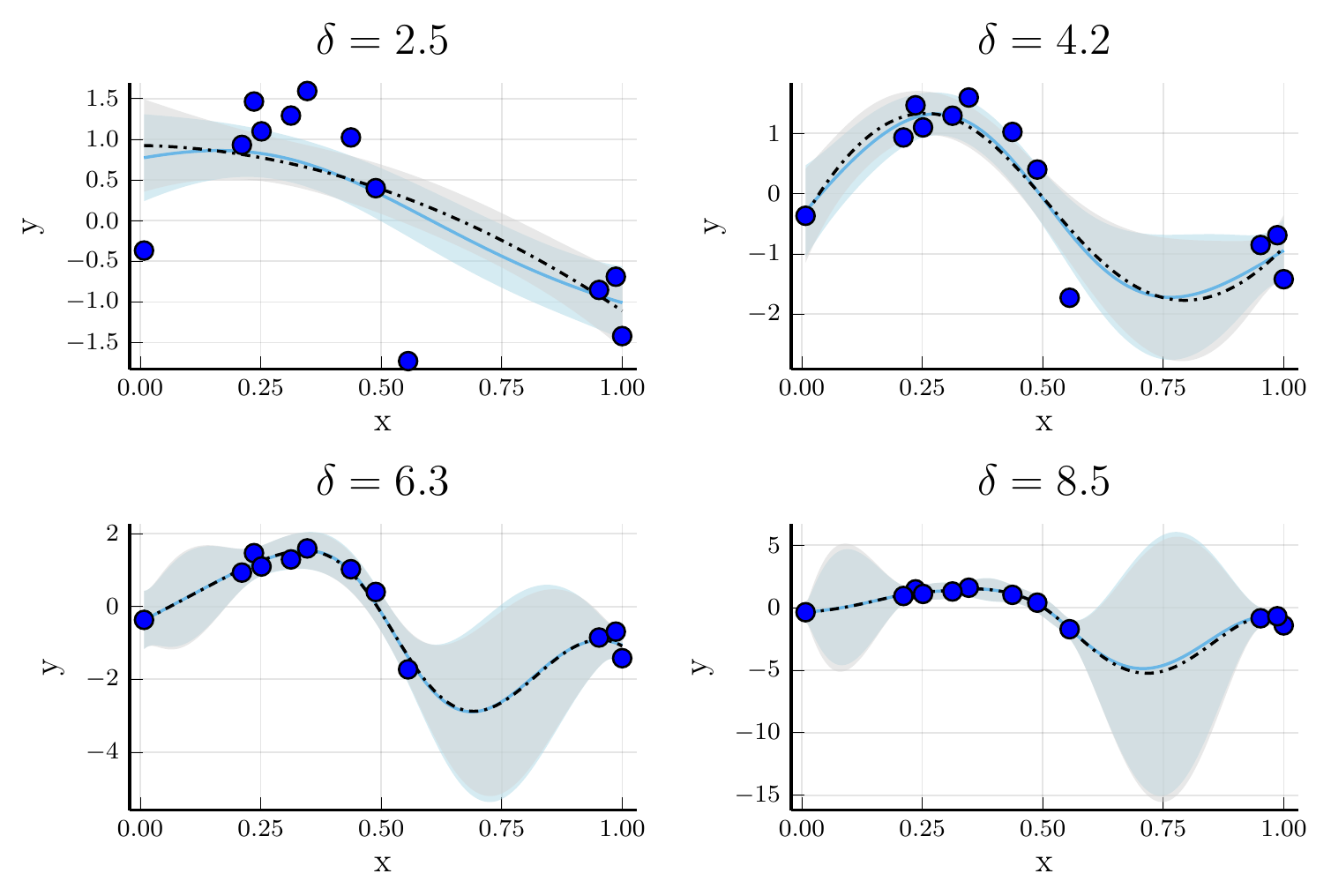}
  \caption{Same as fig. \ref{fig:matched-results-gp-gauss}, but with a Matérn
    kernel with regularity $r=3$.}
  \label{fig:matched-results-gp-matern}
\end{figure}

\subsection{An example with non-Gaussian likelihood}
\label{sec:non-gaussian}

Gaussian processes are used in myriad applications, and most of them involve
non-Gaussian likelihood functions. In appendix \ref{sub:non-gaussian}, we sketch how our results
extend to non-Gaussian likelihoods. Here, we illustrate these results
numerically, via an application that involves logistic likelihoods.

A classical example of combining a Gaussian
process prior with a non-Gaussian likelihood is GP classification, where the
data $\by \in \{0,1\}^n$ are independent binary outcomes and the model is:
\begin{equation}
  \label{eq:classification-lik}
  p(y_i = 1 | f(\bx_i)) = \Phi( f(\bx_i))
\end{equation}
where $\bx_1 \ldots \bx_n$ are ``feature vectors'' or covariates in $\R^d$, $f$
is a non-parametric function modelled as a Gaussian process, and $\Phi$ is a
sigmoidal link function. Often, $\Phi$ is chosen to be the logistic 
function $\Phi(x) = \frac{1}{1+\exp(-x)}$. Compared to the case of
Gaussian likelihoods, an additional difficulty is that the posterior
distribution over $f$ is not a Gaussian process (lack of conjugacy). This
makes it necessary to approximate the posterior, using some form of MCMC method
or approximate inference. In the numerical illustration shown below, we use
Expectation Propagation (EP, \cite{minka2001family}), which provides a Gaussian approximation to the
posterior. EP converges to the correct posterior in the large-$n$ limit \cite{dehaene2018expectation}, and is
known empirically to be extremely accurate in finite samples \cite{williams2006gaussian}.

An additional direction for extending our results consists in letting the
likelihood depend on arbitrary linear functionals of the Gaussian process $f$.
This lets model cases where observations depend on derivatives of $f$, or mean
values of $f$ over some area \cite{sarkka2011linear}. We sketch that extension
in appendix \ref{sec:general-linear}.

\begin{figure}
  \centering
  \includegraphics[width=14cm]{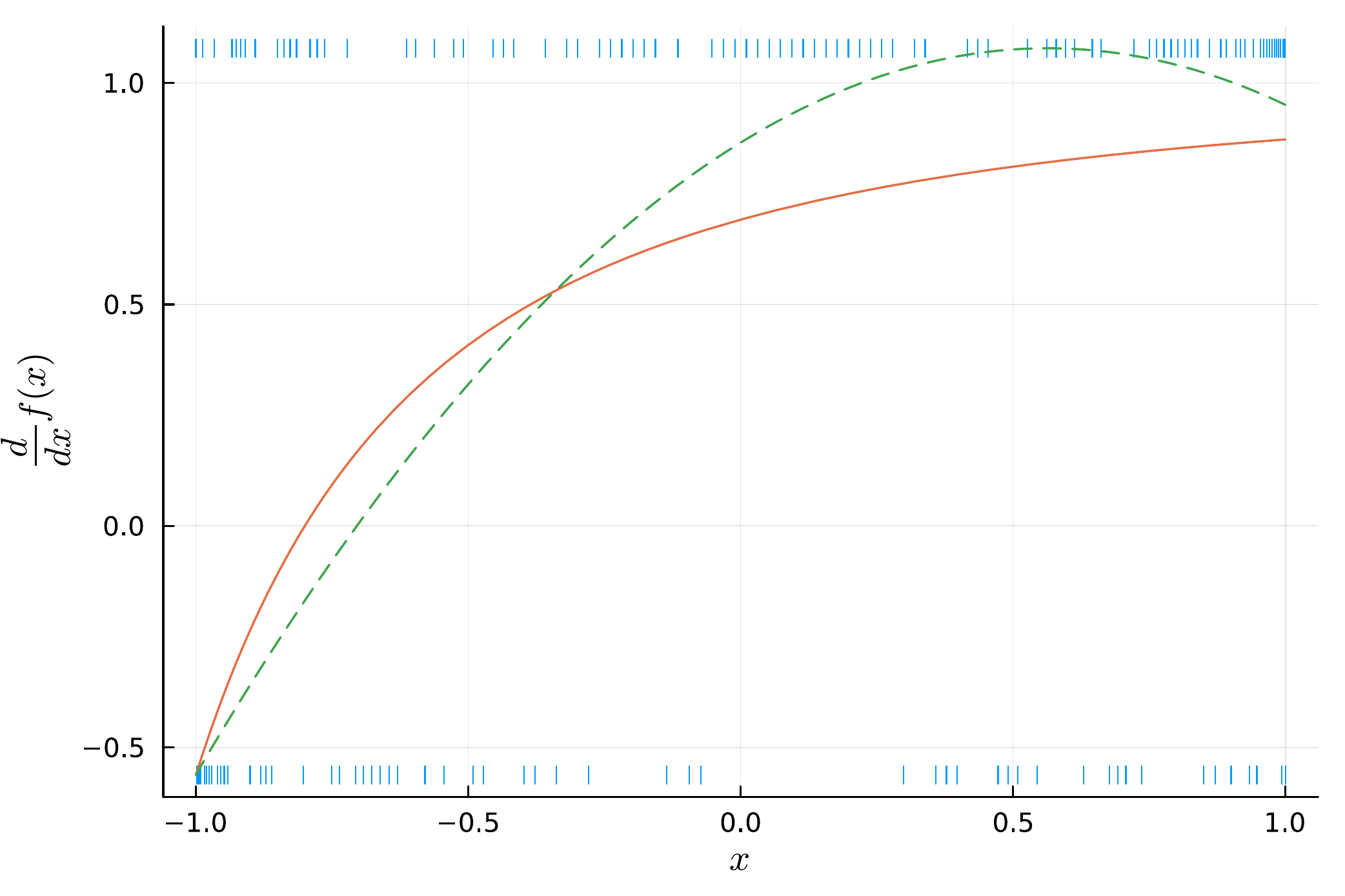}
  \caption{Reconstruction of a function from noisy observations of the sign of
    its derivative. In this example the measurements are binary, $y_i \in
    {0,1}$, with $\E(y_i) = \Phi(f'(x_i))$ and $\Phi$ the logistic function. We
    take $f(x) = x+\frac{1}{1.8+x}$. $f'(x)$ is shown as a solid red line, and
    the simulated measurements as blue dashes at the bottom  and at the top. Fitting a degree-2 polynomial to $f'$ results in the dashed green
    line. Integrating the estimated $f'(x)$ results in a estimator of $f(x)$. 
} 
  \label{fig:illus-derivative}
\end{figure}

Our numerical example brings together these two features (see fig. \ref{fig:illus-derivative}): we assume that the
observations correspond to the sign of the derivative of $f(x)$ (up to noise).
Concretely, the model is as follows: $f(x)$ is a univariate function, and we
observe $y_1 \ldots y_n \in {0,1}^n$ with
\begin{equation}
  \label{eq:classification-lik-fprime}
  p(y_i = 1 | f'(\bx_i)) = \Phi( f'(\bx_i))
\end{equation}
where $x_1 \ldots x_n$ are a set of locations in $[-1,1]$. The goal is to
reconstruct $f$ from these observations. Following \cite{sarkka2011linear}, if
$f$ is a Gaussian process with covariance $k(x,y)$, then $f'$ is a Gaussian
process with covariance $\frac{\partial }{\partial x} \frac{\partial}{\partial
  y} k(x,y)$. Accordingly, eq.~\eqref{eq:classification-lik-fprime} is just an
instance of GP classification with a specific kernel. We can run a standard
version of EP for Gaussian process classification to obtain an approximation of
$p(\bm{f'} | \by )$, where $\bm{f'} = [f'(x_1) \dots f'(x_n)]^t$.

The quantity of interest is however $f$, not $f'$. We can use $p(f(x)| \by)
\propto p(f | \bm{f'})p(\bm{f'} | \by)$ and standard Gaussian conditioning
formulas to obtain:
\begin{equation}
  \label{eq:conditional-exp-derivatives}
  \E(f(x) | \by ) = b(x)^t \bM^{-1} \E(\bm{f'} | \by)
\end{equation}
with $b(x) = [\frac{\partial }{\partial y}
k(x,y)\vert_{y=x_i} ]_{i=1}^n $ a vector of first derivatives of the kernel
function, and
\[ \bM=\left[\frac{\partial }{\partial a}
  \frac{\partial}{\partial b} k(a,b) \vert_{a=x_i,b=x_j}\right]_{i=1,j=1}^n\]
a matrix of
second derivatives. In equation \ref{eq:conditional-exp-derivatives}, $b(x)^t \bM^{-1}$ is best viewed as an
integration operator, that converts observations of a derivative
into an approximation of the function at $x$. A similar equation can be derived
for the posterior variance of $f(x)$ as a function of the variance of
$\bm{f'}|y$. Since no information is available on the mean value of $f$ over the
interval, we plot below the results for $f - \frac{1}{2}\int_{-1}^1 f(x)dx$.
Equivalently, we condition on $\int_{-1}^1 f(x)dx = 0$.

If we use the Gaussian kernel for $f$, then the flat limit behaviour should
match that of a polynomial model. Let us briefly work out what that polynomial
model looks like. Assuming $f(x) = \sum \alpha_{i=0}^m x^{i}$, then $f'(x) =
\sum_{i=1}^{m} (i+1)\alpha_{i+1} x^{i}$. If we use a flat prior on the
coefficients $\alpha$, eq. \ref{eq:conditional-exp-derivatives} turns into a
classical Bayesian logistic regression with $m$ covariates, and we can use EP to
approximate the posterior over $\alpha_1 \ldots \alpha_m$ given $\by$.
Numerically, a better alternative is to use the Legendre polynomials (instead of
the monomial basis) to improve conditioning, and that is what we do in our
implementation.

If instead of the Gaussian kernel we use a kernel with finite smoothness, the
flat limit behaviour corresponds to fitting a smoothing spline. We need
$f$ to be at least once differentiable, which implies that the order of
smoothness should be at least one (which precludes the exponential kernel). In
our illustrations we use a Matérn kernel with $r=3$. The matching
semi-parametric model is of the form $S =\SPM{ -|x-y|^{5}}{\{1,x,x^2\}}$. This
corresponds to a spline of degree 3 (for $f(x)$), and the corresponding model
for $f'(x)$ is then a spline of degree 2. Stating this in terms of kernels
exaggerates the complexity of what we are doing: the procedure consists in
fitting a smoothing spline to the data, which produces an estimate for $f'(x)$,
and integrating that estimate to get an estimate of $f(x)$.  

It remains to compare the results of fitting a GP to directly fitting an
equivalent flat-limit model. In the regression case, we made use of the
effective degrees of freedom to match GP fits to flat-limit results. When
observations are non-Gaussian, an additional source of difficulty arises,
because the estimate is not linear in the observations (which in our case or
binary anyways). Different generalisations can be found in the literature, and
here we follow \cite{o1986automatic} and define the effective d.o.f. from a
Gaussian approximation to the posterior distribution. Specifically, the
approximation produced by Expectation Propagation for the posterior $p(\ff|
\by)$ takes the following form:
\begin{equation}
  \label{eq:ep-approx}
  q(\ff) = \exp(-\frac{1}{2} \ff^t (\bK^{-1} + \bH) \ff + \mathbf{r}^t \ff)
\end{equation}
where $\bH$ is a diagonal matrix, $\mathbf{r}$ a vector, and both depend (non-linearly)
on the data $\by$. In \cite{o1986automatic} the Gaussian approximation at the
mode is
used implicitly, and here we can define analogously the d.o.f. as
\begin{equation}
  \label{eq:dof-non-Gaussian}
  dof(\bK,\by) = \Tr((\bK^{-1} +  \bH)^{-1}\bH)
\end{equation}

One can check that this definition generalises the case of
(heteroskedastic) Gaussian observations, and \cite{o1986automatic} outline
asymptotic arguments in terms of model selection. We will
not repeat them here but note that they carry over to the Gaussian approximation
formed by Expectation Propagation, by the results in
\cite{dehaene2018expectation}. 

Figures \ref{fig:invprob-gaussian} and \ref{fig:invprob-matern} show the results for Gaussian and Matérn kernels,
respectively, with $\varepsilon = \frac{1}{2}$. The approximate d.o.f. given by
eq. \ref{eq:dof-non-Gaussian} succeeds in matching the GP fits to very close
flat-limit equivalents. The match is markedly better with higher d.o.f., even at
higher values of $\varepsilon$ (not shown). We suspect that this has to do with
faster convergence of the eigenvectors associated with smaller eigenvalues to
their flat limit, but our theory is currently insufficient to properly
explain this phenomenon.

\begin{figure}
  \centering
  \includegraphics[width=14cm]{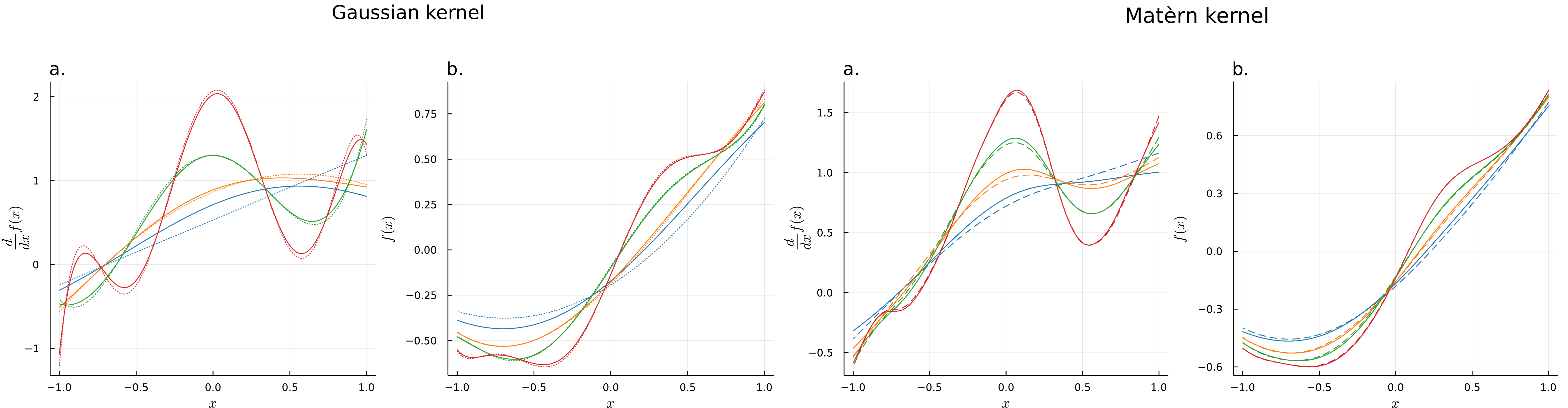}
  \caption{Results of fitting a GP model with Gaussian covariance to the problem
    outlined in fig. \ref{fig:illus-derivative} (same data, $\varepsilon = \frac{1}{2}$), compared to
    matching flat-limit results. The GP fits are shown as solid lines, and
    polynomial fits as dashed lines. a. Estimated value of $f'(x)$, with different
    d.o.f., matched to polynomial fits b. Corresponding estimates for $f(x)$. 
  } 
  \label{fig:invprob-gaussian}
\end{figure}
\begin{figure}
  \centering
  \includegraphics[width=14cm]{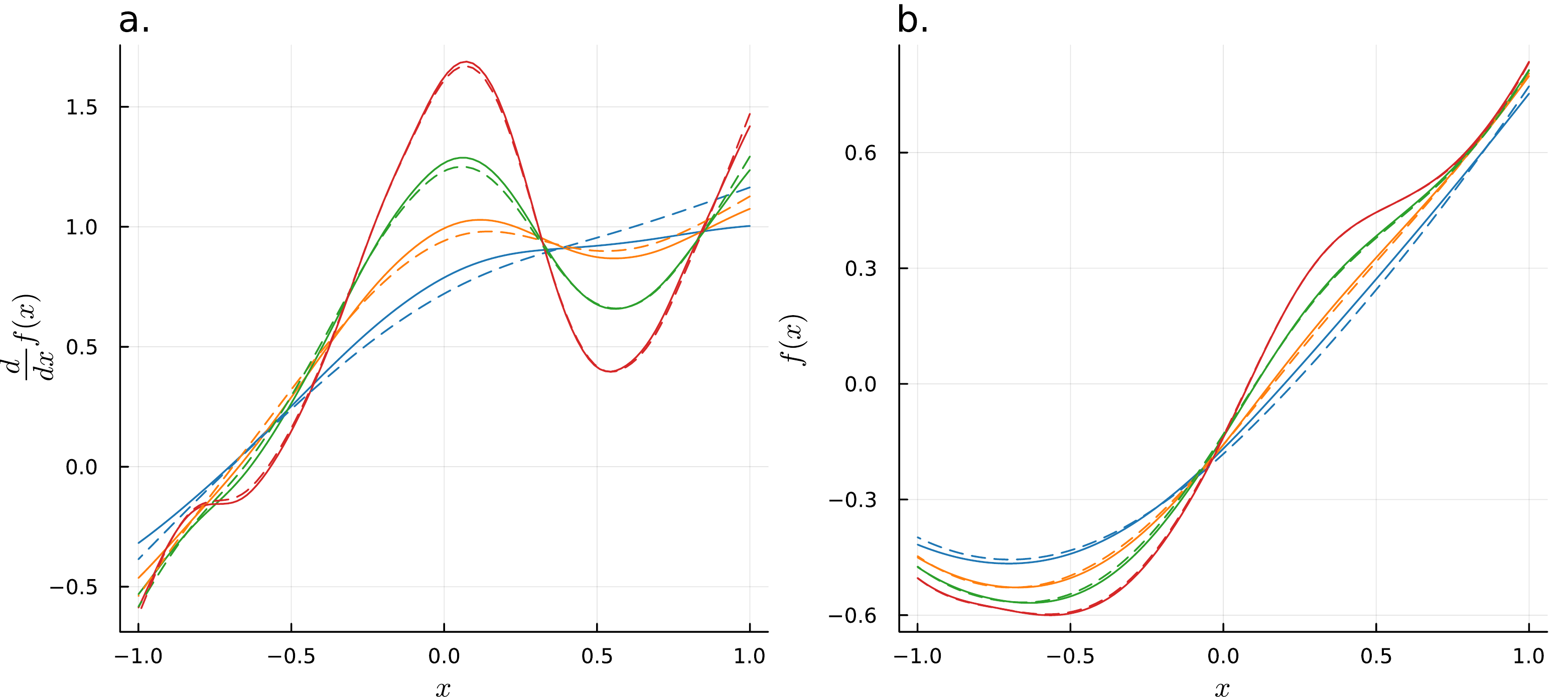}
  \caption{Same as fig. \ref{fig:invprob-gaussian}, with a $r=3$ Matérn kernel.
    The matching flat limit models are smoothing splines.     The GP fits are shown as solid lines, and
    the smoothing spline fits as dashed lines.
  } 
  \label{fig:invprob-matern}
\end{figure}

%%%%%%%%%%%%%%%%%%%%%%%%%%%%%
%%%%%%%%%%%%%%%%%%%%%%%%%%%%
%%%%%%%%%%%%%%%%%% MULTIVARIATE CASE
%%%%%%%%%%%%%%%%%%%%%%%%%%%%%
%%%%%%%%%%%%%%%%%%%%%%%%%%%%

\section{Results in the multivariate case}
\label{sec:multivariate}

To deal with the multivariate case, we require a bit of background on
multivariate polynomials and polyharmonic splines. 

\subsection{Preliminaries and notation}
\label{sec:notation-multivar}

Much of the material here is drawn from
\cite{barthelme2023determinantal,BarthelmeUsevich:KernelsFlatLimit}, please
refer to these papers for a more extensive background. Much information can also be found in {\it e.g. }\cite{wendland2004scattered} .

Let $\vect{x} =
\begin{pmatrix}
  x_1 & x_2 & \ldots & x_d
\end{pmatrix}^\top
\in \R^d $. A monomial in $\vect{x}$ is a function of
the form:
\[ \vect{x}^{\vect{\alpha}} = \prod_{i=1}^d x_i^{\alpha_i}  \]
for $\vect{\alpha} \in \mathbb{N}^d$ (a multi-index). The degree of a monomial
is defined $|\vect{\alpha}|=\sum_{i=1}^d \alpha_i$. For instance:
$ \vect{x}^{(1,3,1)} = x_1^1x_2^3x_3^1$  has degree 5.

A multivariate polynomial in $\vect{x}$ is a weighted sum
of monomials in $\vect{x}$, and its degree is equal to the maximum of the
degrees of its component monomials. As an example, $ -\vect{x}^{(1,2,1)}+ \vect{x}^{(0,1,1)}+2.2\vect{x}^{(1,0,0)}-1$ is a
multivariate polynomial of degree 4 in $\R^3$:

An important difference between the univariate and the multivariate case is that
when $d>1$, there are several monomials of any given degree, instead of just
one. For instance, with $d=2$, the first few monomials are $
  \vect{x}^{(0,0)}$ of degree 0; $
  \vect{x}^{(1,0)},\vect{x}^{(0,1)} $ of degree 1 ;
 $ \vect{x}^{(2,0)},\vect{x}^{(1,1)},\vect{x}^{(0,2)}$ of degree 3. The number of monomials of degree $k$ in dimension $d$ is :
\begin{equation}
  \label{eq:number-monomials}
  \HH_{k,d} = {k+d-1 \choose d-1}.
\end{equation}
The notation $\HH_{k,d}$ comes from the notion of \emph{homogeneous
  polynomials}.

A homogeneous polynomial is a polynomial made up of monomials with equal degree.
Therefore, the set of homogeneous polynomials of degree $k$ has dimension
$\HH_{k,d}$. The set of polynomials of degree $k$ is spanned by the sets of
homogenous polynomials up to $k$, and has dimension:
\begin{equation}
  \label{eq:dim-polynomials}
  \PP_{k,d} = \HH_{0,d} + \HH_{1,d} + \ldots + \HH_{k,d} = {k+d \choose d}.
\end{equation}
Note that $\PP_{0,d}=1$ and $\PP_{1,d}=d+1$. By convention, we will also set $\PP_{-1,d}$ to be equal to $0$.
% We will need to determine a kind of inverse for $\HH_{k,d}$, to answer the
% question ``what degree to we need so that $\HH_{k,d}$ is no more/no less than
% $n$''? Let $\gfloor{d}{n}$ and $\gceil{d}{n}$ such that
% \begin{equation}
%   \label{eq:gfloor}
%   \gfloor{d}{n} = \max \{ i | \PP_{i,d} \leq  n  \}
% \end{equation}
% and
% \begin{equation}
%   \label{eq:gfloor}
%   \gceil{d}{n} = \min \{ i | \PP_{i,d} \geq  n  \}
% \end{equation}
% For instance, if $n=5$, $\gfloor{2}{5}=1$ and $\gfloor{2}{5}=2$, because
% $\HH_{1,1}=3$ and $\HH_{1,2}=6$. $\gfloor{d}{n}+1=\gceil{d}{n}$, unless they are
% equal. We use this particular notation, because these things are in some sense
% ``generalised'' floor and ceiling functions, with the slight wrinkle that
% $\gfloor{1}{n} = \floor{n}-1$.  

The fact that there are several monomials for each degree in dimension $d$ is
reflected in the structure of the eigenvalues in the flat limit. Previously, in
the $r=\infty$ case, each eigenvalue had a different order in $\varepsilon$. In
the multivariate case, there are blocks of eigenvalues with the same order in
$\varepsilon$, corresponding to a block of homogeneous polynomials of a given
degree $m$. For instance, in $d=2$, there is one monomial of degree $0$, two
monomials of degree $1$ ($x_1$ and $x_2$), three monomials of degree $2$
($x_1^2,x_1x_2$ and $x_2^2$), and in general $m+1$ monomials of order $m$. As
first shown in \cite{schaback2005multivariate}, these give rise to a single
eigenvalue of order $\varepsilon^0$, two eigenvalues of order $\varepsilon^2$,
three eigenvalues of order $\varepsilon^4$, etc.

\subsubsection{Polynomial bases and orderings}
\label{sec:bases-orderings}

The multivariate flat limit is more complicated than the univariate case, even
though the results are substantially the same. The reason why the results are
more complicated is fairly deep and boils down to the lack of a natural order on
the set of multivariate monomials. 

We use  in section \ref{sec:results} the fact that eigenvectors of smooth kernel matrices tend to
discrete polynomials. In dimension $d=1$, there is an obvious way to construct a
basis of orthogonal polynomials, which is just to apply the Gram-Schmidt process
to the monomials $(1,x,x^2,x^3,\ldots)$. The monomials in dimension 1 are
naturally ordered by increasing degree. In dimension two, the degree only gives
a partial order. For instance, at degree one, even though the constant
polynomial is a consensus starting point, we have to decide at degree 1 which of
$x_1$ or $x_2$ should come first in the Gram-Schmidt process. Depending on which
we pick, we get a different orthogonal basis spanning polynomials of degree
$\leq 1$. We could also decide to orthogonalise $x_1+x_2$ followed by $x_1-x_2$,
and get yet another basis. Multivariate orthogonal polynomials are non-unique,
and therefore both richer and more complicated than univariate orthogonal
polynomials.

To state our results, we need to pick an ordering on the monomials, even though
the ordering is immaterial to the actual limits (kernel matrices do not care how
we order monomials). The need for
an ordering is an annoyance that can probably be lifted by finding a
representation that is intrinsically invariant, but we have not found one as
yet. 

In any event, given an ordering, for an ordered set of points $\Omega = \{\vect{x}_1, \ldots, \vect{x}_n\}$, all in $\RR^d$, we define the multivariate Vandermonde matrix as:
\begin{equation}%\label{eq:VandermondeND}
  \matr{V}_{\le k} = 
  \begin{bmatrix}
    \matr{V}_{0} & \matr{V}_{1}  & \cdots & \matr{V}_{k}  
  \end{bmatrix} \in \mathbb{R}^{n\times \PP_{k,d}},
\end{equation}
where each block $\matr{V}_i \in\mathbb{R}^{n\times \HH_{i,d}}$ contains the monomials of degree $i$ evaluated on
the points in $\Omega$. As an example, consider $n=3$, $d=2$ and the ground set
\[
  \Omega = \{\left[\begin{smallmatrix}y_1 \\z_1 \end{smallmatrix}\right], \left[\begin{smallmatrix}y_2 \\z_2 \end{smallmatrix}\right], \left[\begin{smallmatrix}y_3 \\z_3 \end{smallmatrix}\right]\}.
\]
One has, for instance for $k=2$:
\[
  \matr{V}_{\le 2} =
  \left[\begin{array}{c|cc|ccc}
          1 & y_1 & z_1 & y_1^2 & y_1z_1 & z_1^2 \\
          1 & y_2 & z_2 & y_2^2 & y_2z_2 & z_2^2 \\
          1 & y_3 & z_3 & y_3^2 & y_3z_3 & z_3^2 \\
        \end{array}\right],
    \]
    where the ordering within each block is arbitrary. 

We will use $\bV_{\le k}(\X)$ to denote the matrix $\bV_{\le k}$ reduced to its
lines indexed by the elements in $\mathcal{X}$. As such, $\bV_{\le k}(\X)$ has
$|\X|$ rows and $\PP_{k,d} $ columns. The QR decomposition of $\bV_{\le k}$
inherits a natural block structure from $\bV_{\le k}$ corresponding to the
degrees of the monomials, {\it i.e.} we may split $\bQ_{\le k}$ into blocks $\bQ_{0}$,
$\bQ_{1}$, etc. where $\bQ_{i}$ comes from the Gram-Schmidt process applied to
monomials of degree $i$ onto monomials of lower degree.

What this means for kernel matrices is that the particular limiting eigenbasis
that appears as $\flatlim$ depends more strongly on the kernel than in the
univariate case. In a sense, the kernel implicity selects a particular family of
orthogonal polynomials. The specific basis is determined by the so-called
\emph{Wronskian matrix} of the kernel, defined as:
\begin{equation}\label{eq:wronskian_nd}
  \matr{W}_{\leq k}  = 
  \left[
    \frac{k^{(\va,\vect{\beta})} (\vect{0},\vect{0})}{\va!\vect{\beta}!}
  \right]_{|\va| \leq k, |\vb| \leq k} \in\mathbb{R}^{\PP_{k,d}\times \PP_{k,d}}.
\end{equation}
$k^{(\va,\vect{\beta})}$ being the partial derivatives of $k(\vect{x},\vect{y})$ with respect to $\vect{x}^{\vect{\alpha}} $ and $\vect{x}^{\vect{\beta}} $.
Here we index the matrix using multi-indices (equivalently, monomials), so that an element of
$\matr{W}_{\leq k}$ is {\it e.g.}, $\matr{W}_{(0,2),(2,1)}$ which is a scaled
derivative of $k(\vect{x},\vect{y})$ of order $(0,2)$ in $\vect{x}$ and $(2,1)$
in $\vect{y}$.
For example, for $d=2$ and $k=2$ we may write 
\[
\matr{W}_{\leq 2} =\begin{bmatrix}
k^{((0,0),(0,0))} & k^{((0,0),(1,0))} & k^{((0,0),(0,1))} & \frac{k^{((0,0),(2,0))}}{2} & {k^{((0,0),(1,1))}} & \frac{k^{((0,0),(0,2))}}{2} \\
k^{((1,0),(0,0))} & k^{((1,0),(1,0))} & k^{((1,0),(0,1))} & \frac{k^{((1,0),(2,0))}}{2} & {k^{((1,0),(1,1))}} & \frac{k^{((1,0),(0,2))}}{2} \\
k^{((0,1),(0,0))} & k^{((0,1),(1,0))} & k^{((0,1),(0,1))} & \frac{k^{((0,1),(2,0))}}{2} & {k^{((0,1),(1,1))}} & \frac{k^{((0,1),(0,2))}}{2} \\
\frac{k^{((2,0),(0,0))}}{2} & \frac{k^{((2,0),(1,0))}}{2} & \frac{k^{((2,0),(0,1))}}{2} & \frac{k^{((2,0),(2,0))}}{4} & {\frac{k^{((2,0),(1,1))}}{2}} & \frac{k^{((2,0),(0,2))}}{4} \\
k^{((1,1),(0,0))} & k^{((1,1),(1,0))} & k^{((1,1),(0,1))} & \frac{k^{((1,1),(2,0))}}{2} & {k^{((1,1),(1,1))}} & \frac{k^{((1,1),(0,2))}}{2} \\
\frac{k^{((0,2),(0,0))}}{2} & \frac{k^{((0,2),(1,0))}}{2} & \frac{k^{((0,2),(0,1))}}{2} & \frac{k^{((0,2),(2,0))}}{4} & {\frac{k^{((0,2),(1,1))}}{2}} & \frac{k^{((0,2),(0,2))}}{4} \\
\end{bmatrix}\in\mathbb{R}^{\PP_{2,2}\times \PP_{2,2}}
\]
for a given ordering of the monomials, and where all the derivatives are taken
at $\vect{x}=0,\vect{y}=0$. Eq. \eqref{eq:wronskian_nd} makes Wronskian matrices
look more daunting to compute than they really are. We explain in the appendix
how the Wronskian may easily be computed in the stationary case from the Fourier transform of the kernel.

\subsubsection{Polyharmonic splines}
\label{sec:polyharmonic-splines}

Polyharmonic splines \cite{duchon1977splines} generalise smoothing splines in
$d>1$, and play the same role in the flat limit. For our
purposes here, the space of polyharmonic splines of order $r$ in dimension $d$
for a point set $\X$ is given by functions of the form:
\begin{equation}
  \label{eq:phs-def}
  f(\bx) = \sum_{i=1}^n \beta_i \norm{\bx - \bx_i}^{2r-1} + \sum_{|\vect{\gamma}| < k} \alpha_{\vect{\gamma}}\bx^{\vect{\gamma}}
\end{equation}
where $\bV_{< k}^\top\vect{\beta}= 0$.

We recognise the general form of
semi-parametric models (eq. \eqref{eq:cond-exp-semi-par-RKHS}), where here the
parametric part is played by monomials of degree less than $r$, and the
non-parametric part by the radial basis function $\norm{\bx - \bx_i}^{2r-1}$.
In our notation, polyharmonic spline models are therefore semiparametric models given by
\[ \M_r = \SPM{ (-1)^r \norm{\bx - \by}^{2r-1}  }{ \{ \bx^{\vect{\alpha}} \vert
    |\vect{\alpha}| < r \}} \]
Note that polyharmonic splines generalise splines to $d>1$, but they are not
piecewise polynomials. The fact that $l(\bx) = (-1)^r \norm{\bx - \by}^{2r-1}$
is conditionally positive-definite is proved in
\cite{micchelli1986interpolation} \footnote{More precisely, it is a minor
  variant of the functions actually studied.}.

% The reason these functions are called polyharmonic is because the radial basis
% function $g(x) = \norm{\bx}^{2k-1}$ is a solution of the partial differential equation
% \[ \Delta^{k} g = \delta_0 \]
% where $\Delta$ is the Laplace operator, and $\delta_0$ is a Dirac at $\vect{0}$.
% Put differently, $g(x)$ is a Green's function for the polyharmonic operator.

% They arise in a similar variational setting, however: optimisation problems of
% the form
% \begin{equation}
%   \label{eq:phs-spline-optim}
%   \argmin \frac{1}{2}\sum_i{(y_i - f(\bx_i))}^2 + \eta \sum_{|\vect{\gamma}| = k} \frac{k!}{\vect{\gamma}!} \int_\Omega  (D^{\vect{\gamma}} f)^2 d \vect{x}
% \end{equation}
% have a solution in the form of eq. \eqref{eq:phs-def}. The penalty is a sum of
% the energy of all derivatives of degree $k$ of the function $f$. 

\subsection{Smoother matrices in $d > 1$}
\label{sec:smoother-multivariate}

The smoother matrices in $d>1$ have the same kind of limit as in the univariate
case. Depending on the growth rate of $\gamma$, $n$ and the regularity of the
kernel, sometimes one has polynomials, sometimes splines. The next lemma gives a
complete picture, and reexpresses theorem 5.2 from
\cite{barthelme2023determinantal} in a form adapted to the GP context. To
lighten the notation in the lemma, we define the following matrices, which
appear in the flat limit of the eigenvectors: 
\begin{equation}
  \bP_l = \bQ_{l}\bQ_{l}^\top\bV_{l} \bar{\bW}_l\bV_{l}^\top\bQ_{l}\bQ_{l}^\top\label{eq:eigenvec-lim}
\end{equation}
where  $\bar{\bW}_l \in\mathbb{R}^{\HH_{l,d}\times \HH_{l,d}}$ is the  Schur
complement:
\begin{equation}
  \label{eq:wronskian-schur}
  \bar{\bW}_l= \matr{W}_{\lrcorner} - \matr{W}_{\llcorner}
  (\matr{W}_{\leq l - 1})^{-1}\matr{W}_{\urcorner}
\end{equation}
in the block description of $\matr{W}_{\leq l}$ or
\[
\left(\begin{array}{c|c}\matr{W}_{\leq l - 1} & \matr{W}_{\urcorner} \\\hline \matr{W}_{\llcorner}& \matr{W}_{\lrcorner} \end{array}\right)
\]
We recall that $\widetilde{\bD^{(2r-1)}}$ is the matrix $\bD^{(2r-1)} = \left[ \norm{\bx_i -
    \bx_j}^{2r-1} \right]_{i,j}$ with monomials of degree $< r$ projected out, {\it i.e.}
\begin{equation}
  \label{eq:Dtilde}
 \widetilde{\bD^{(2r-1)}} = (\bI - \bQ_{< r} \bQ_{< r}^\top)\bD^{(2r-1)}(\bI - \bQ_{< r} \bQ_{< r}^\top)
\end{equation}

The following lemma is not particularly easy to read and the reader may skip ahead to the
theorem at no great loss. It generalizes to the multivariate setting the first steps in the proof of theorem 
 \ref{thm:equivalent-kernels-1d} in the univariate case (see Th. \ref{eigenflat1d:th} and subsections \ref{proof1dsmooth:sssec} and  \ref{proof1dnonsmooth:sssec} ).
\begin{lemma}
  \label{lem:smoother-multivariate}
  Let $\X \subset \Omega \subset \R^d$ with a set of $|\X| = n$ measurement
  locations, $k_\varepsilon$ a kernel with regularity $r$, $p$ an integer and $\gamma(\varepsilon)
  = \gamma_0\varepsilon^{-p}$. Then the smoother matrix
  \[ \bM_\varepsilon = \bK_\varepsilon\left(\bK_\varepsilon +
    \frac{\sigma^2}{\gamma(\varepsilon)} \bI\right)^{-1} \] has the following expansion in
  $\flatlim$:
  \begin{equation}
    \label{eq:limit-smoother-multivar}
    \bM_\varepsilon = \bA + \bB \matr{\Gamma} \bB^\top+ \O(\varepsilon)
  \end{equation}
  where $\bA$ is a projection matrix, $\bB^\top\bA=0$, and $\matr{\Gamma}$ is
  diagonal (and in some cases null).  
  $\bA$, $\bB$ and $\matr{\Gamma}$ depend on $r$, $n$ and $p$.
  First, $p$ is either even or odd, meaning that only one out of the two
  following values $\frac{p}{2},\frac{p+1}{2}$ is an integer. We call that
  integer l. The possible limits are:
  \begin{itemize}
  \item If $\PP_{l-1,d} \geq n$ or $r < \frac{p+1}{2}$ then $\bM_\varepsilon = \bI + \O(\varepsilon)$
  \item If $r > \frac{p+1}{2}$ and $p$ is odd, then $\matr{\Gamma}=0$ and $\bA =
    \bQ_{<l}\bQ_{<l}^\top$ 
  \item If  $r > \frac{p+1}{2}$ and $p$ is even, then $\bA =
    \bQ_{< l}\bQ_{< l}^\top$, $\bB$ are the (non-null) eigenvectors of $\bP_l$
    (defined above) and $\gamma_{ii} = \frac{\gamma_0\lt_i}{1+\gamma_0\lt_i}$,
    where $\lt_i$ is the i-th eigenvalue of $\bP_l$. 
  \item If $r = \frac{p+1}{2}$, then $\bA = \bQ_{\leq r-1}\bQ_{\leq r-1}^\top$  and
    $\bB$ are the non-null eigenvectors of $f_{2r-1}\tilde{\bD}^{(2r-1)}$, $\lt_i$
    its eigenvalues, and $\gamma_{ii} = \frac{\gamma_0\lt_i}{1+\gamma_0\lt_i}$.
  \end{itemize}
\end{lemma}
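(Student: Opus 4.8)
The plan is to reduce the statement to an eigen-decomposition computation for $\bK_\varepsilon$ in the flat limit, quoting the spectral results of \cite{BarthelmeUsevich:KernelsFlatLimit,barthelme2023determinantal}. Write the eigendecomposition $\bK_\varepsilon = \bU_\varepsilon \bm{\Lambda}_\varepsilon \bU_\varepsilon^\top$. The key input is that the eigenvalues organise into \emph{blocks} indexed by polynomial degree: a block of $\HH_{l,d}$ eigenvalues of order $\varepsilon^{2l}$ for $l=0,1,2,\dots$ in the smooth ($r=\infty$) case, and in the finite-$r$ case the same pattern up to degree $r-1$, followed by a block of $n-\PP_{r-1,d}$ eigenvalues of order $\varepsilon^{2r-1}$ carrying the polyharmonic-spline information (governed by $\tilde{\bD}^{(2r-1)}$), with all remaining eigenvalues of smaller order. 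Correspondingly, the eigenvectors converge: the block at degree $l$ converges to the columns of $\bQ_l$ up to a rotation by the eigenvectors of $\bar{\bW}_l$ — this is precisely what $\bP_l$ in \eqref{eq:eigenvec-lim} encodes — and the spline block converges to the eigenvectors of $f_{2r-1}\tilde{\bD}^{(2r-1)}$.

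The next step is to feed this into the smoother matrix. Since $\gamma(\varepsilon)=\gamma_0\varepsilon^{-p}$, we have
\[
\bM_\varepsilon = \bK_\varepsilon\Bigl(\bK_\varepsilon + \tfrac{\sigma^2}{\gamma_0}\varepsilon^{p}\bI\Bigr)^{-1}
= \bU_\varepsilon \diag\!\Bigl(\tfrac{\lambda_i(\varepsilon)}{\lambda_i(\varepsilon) + \tfrac{\sigma^2}{\gamma_0}\varepsilon^{p}}\Bigr)\bU_\varepsilon^\top .
\]
Now compare the order of each eigenvalue $\lambda_i(\varepsilon)$ against the threshold $\varepsilon^p$. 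With $l$ the unique integer among $\{\tfrac{p}{2},\tfrac{p+1}{2}\}$: eigenvalue blocks of degree $<l$ have order strictly larger than $\varepsilon^p$, so their ratio tends to $1$; blocks of degree $>l$ (and, when $r\le l$, \emph{all} surviving blocks — hence the case $\bM_\varepsilon=\bI+\O(\varepsilon)$, which also covers $\PP_{l-1,d}\ge n$ since then there is no room for a non-trivial block) have order smaller and their ratio tends to $0$. The interesting behaviour is exactly at the matching block. When $p$ is odd, $l=\tfrac{p+1}{2}$ is the \emph{degree} of the critical block but there is no eigenvalue of order exactly $\varepsilon^{2l-1}=\varepsilon^p$ in the smooth regime — the degree-$l$ block sits at order $\varepsilon^{2l}\ll\varepsilon^p$ — so $\matr{\Gamma}=0$ and $\bA=\bQ_{<l}\bQ_{<l}^\top$. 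When $p$ is even, the degree-$l$ block sits at order exactly $\varepsilon^{2l}=\varepsilon^p$, so its ratios tend to $\tfrac{\gamma_0\lt_i}{1+\gamma_0\lt_i}$ with $\lt_i$ the eigenvalues of the limiting block matrix, which is $\bP_l$; the corresponding eigenvectors give $\bB$, and $\bA=\bQ_{<l}\bQ_{<l}^\top$ collects the degree-$<l$ contributions. When $r=\tfrac{p+1}{2}$ (so $p=2r-1$ odd), the critical eigenvalues are the spline block of order $\varepsilon^{2r-1}=\varepsilon^p$, governed by $f_{2r-1}\tilde{\bD}^{(2r-1)}$, while the polynomial blocks of degree $\le r-1$ all dominate and contribute $\bA=\bQ_{\le r-1}\bQ_{\le r-1}^\top$. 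In every case one then reads off $\bA+\bB\matr{\Gamma}\bB^\top+\O(\varepsilon)$, with $\bA$ a projection, $\bB^\top\bA=0$ inherited from orthogonality of eigenvectors across blocks, and $\matr{\Gamma}$ diagonal.

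The last step is bookkeeping: one must check that the $\O(\varepsilon)$ error in the eigenprojectors and eigenvalues propagates to an $\O(\varepsilon)$ error in $\bM_\varepsilon$, which follows because the map from $(\bU_\varepsilon,\bm{\Lambda}_\varepsilon)$ to the smoother matrix is smooth once the relevant eigenvalue ratios are bounded away from their degenerate endpoints — and they are, since each surviving ratio converges to an interior point of $[0,1]$ (or to a clean $0$ or $1$ with the correct polynomial rate). \textbf{The main obstacle} I anticipate is not the limit itself but the precise matching of the combinatorial case split to the spectral results of \cite{barthelme2023determinantal}: one must verify that the orders in $\varepsilon$ of the eigenvalue blocks are \emph{exactly} $\varepsilon^{2l}$ (resp.\ $\varepsilon^{2r-1}$) with no accidental lower-order cancellations, that the limiting block matrices are indeed $\bP_l$ (via the Schur complement $\bar{\bW}_l$ of the Wronskian) and $f_{2r-1}\tilde{\bD}^{(2r-1)}$, and that degeneracies at block boundaries (e.g.\ when $\PP_{l-1,d}\ge n$, or when a block has some zero eigenvalues, explaining the ``non-null eigenvectors'' qualifier) are handled without spurious contributions — all of which is essentially a careful re-reading of theorem~5.2 of \cite{barthelme2023determinantal} translated into the present normalisation.
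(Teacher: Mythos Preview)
Your proposal is correct and follows essentially the same approach as the paper: the paper states explicitly that the lemma ``reexpresses theorem~5.2 from \cite{barthelme2023determinantal} in a form adapted to the GP context'' and that it ``generalizes to the multivariate setting the first steps in the proof of theorem~\ref{thm:equivalent-kernels-1d}'' (namely the eigendecomposition of $\bM_\varepsilon$, the eigenvalue-order analysis via Rellich, and the case split on $p$ versus $2r-1$ carried out in subsections~\ref{proof1dsmooth:sssec}--\ref{proof1dnonsmooth:sssec}). Your block-by-block comparison of eigenvalue orders against the threshold $\varepsilon^p$, with the limiting eigenvectors identified through $\bP_l$ and $f_{2r-1}\tilde{\bD}^{(2r-1)}$, is precisely that argument.
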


In a nutshell, the smoother matrices are in the limit either projection
matrices, or the sum of a projection matrix and a smoother matrix. This
indicates that the limiting models are generally semi-parametric and
occasionally parametric.

\subsection{Main result in $d>1$}
\label{sec:main-result-multidim}

The generalisation of theorem \ref{thm:equivalent-kernels-1d} to the
multivariate case requires the following assumptions. 

In the statement of the theorem, $k_\varepsilon$ designates
a family of kernels indexed by an inverse-scale parameter $\varepsilon$, of the
form
\[ k_\varepsilon(\bx,\by)= \kappa(\varepsilon \bx, \varepsilon \by) \]
The required assumptions are: 
\begin{enumerate}
\item $\kappa$ is stationary and radial (isotropic); i.e. there exists $\psi$ such that $\kappa(\bx,\by) =
  \psi\left(\norm{\bx-\by}_2 \right)$
\item $\psi$ is analytic in a neighbourhood of 0. 
\end{enumerate}
The first assumption is for simplicity, and because the most common types of
kernels are radial. Non-radial kernels can be dealt with using the tools in
\cite{BarthelmeUsevich:KernelsFlatLimit}, but at the cost of greater complexity.
As before, the second assumption can be removed in some cases, with some
subtleties involved, see appendix \ref{sec:non-analytic-kernels}. 

\begin{theorem}
	\label{thm:equivalent-kernels-nd}
	Let $k_\varepsilon(\bx,\by) = \kappa( \varepsilon(\bx-\by) )$ a family of kernels with
  inverse-scale parameter $\epsilon$. Let $\kappa$ be a stationary positive-definite kernel for $\bx,\by$ in
	$\R^d$, with regularity parameter $r$, and $p$ an integer. 
	Then the following asymptotic equivalence holds:
	\[ k_\varepsilon\varepsilon^{-p} \fleq \SPM{l_p}{\V_p}\]
	where $l_p(x,y)$ and $\V_p$ depend on the interplay between $p$ and $r$. There are four different cases:
	\begin{itemize}
  \item $p<2r-1$ and $p$ is even, \emph{i.e.}, $\exists\, m<r\text{ s.t. } p =
    2m$. Then $l(\bx,\by) = \sum_{|\ba|=m,|\bb|=m} \bar{W}_m(\ba,\bb)
    \bx^{\ba}\by^{\bb}$, $\V = \{ \bx^{\vect{\gamma}} \vert\ |\vect{\gamma}| < m
    \}$. This case amounts to penalised polynomial regression.
  \item $p<2r-1$ and $p$ is odd, \emph{i.e.}, $\exists\, m<r-1 \text{ s.t. } p =
    2m+1$. Then $l(\bx,\by) = 0$, $\V = \{ \bx^{\vect{\gamma}} \vert\
    |\vect{\gamma}| < m \}$. This case amounts to unpenalised polynomial
    regression.
  \item $p=2r-1$. In this case, $l(\bx,\by) = (-1)^r \norm{\bx-\by}^{2r-1}$, $\V
    = \{ \bx^{\vect{\gamma}} \vert\ |\vect{\gamma}| < r \} $, which amounts to
    polyharmonic spline regression.
  \item $p>2r-1$. This case leads to an interpolant, if it exists, independently
    of the value of $\sigma^2$. The interpolant is either a polyharmonic spline
    (finite $r$) or a polynomial (infinite $r$). The interpolant may not exist;
    this depends on the number of points in $\X$ and its geometry.
	\end{itemize}
\end{theorem}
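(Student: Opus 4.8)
The plan is to mirror the univariate proof (appendix \ref{sec:proof-of-univar-result}): reduce the claimed asymptotic prediction-equivalence to a statement about the limiting smoother matrix, then read that limit off from Lemma~\ref{lem:smoother-multivariate} and match it, case by case, to the smoother matrix of the proposed semi-parametric model via Eq.~\eqref{eq:smoother-spm}. First I would record the reduction. By Proposition~\ref{prop:pred-eq-smoother-semipar}, together with the computations in the proof of Lemma~\ref{lem:equivalence-smoother}, the predictive mean and variance of a kernel model at a point $x$ given data on a finite set $\X$ are continuous functions of the single smoother matrix attached to the augmented set $\Y = \X \cup \{x\}$. Hence it suffices to show that, after rescaling by $\varepsilon^{-p}$ (and possibly by an $\varepsilon$-independent constant, absorbed into $\propto$), the smoother matrix $\bM_\varepsilon$ of $\varepsilon^{-p}k_\varepsilon$ on any unisolvent $\Y$ expands as $\bM_\varepsilon = \bM_0 + \O(\varepsilon)$ with $\bM_0$ equal to the smoother matrix of $\SPM{l_p}{\V_p}$ on $\Y$, for every $\sigma^2 \in \R^+$; asymptotic prediction-equivalence in the sense of Definition~\ref{def:asymptotic-pred-eq} then follows directly.

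Next I would invoke Lemma~\ref{lem:smoother-multivariate} with $\gamma(\varepsilon) = \gamma_0\varepsilon^{-p}$, which yields $\bM_\varepsilon = \bA + \bB\matr{\Gamma}\bB^\top + \O(\varepsilon)$ with $\bA$, $\bB$, $\matr{\Gamma}$ explicit. The four branches of the lemma correspond to the four cases of the theorem under the dictionary: $p<2r-1$ and $p$ even $\Leftrightarrow r>\tfrac{p+1}{2}$ and $p$ even; $p<2r-1$ and $p$ odd $\Leftrightarrow r>\tfrac{p+1}{2}$ and $p$ odd; $p=2r-1 \Leftrightarrow r=\tfrac{p+1}{2}$; $p>2r-1 \Leftrightarrow r<\tfrac{p+1}{2}$, noting that $p>2r-1$ with $p$ integer forces $\tfrac{p+1}{2}>r$. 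The remaining branch of the lemma, $\PP_{l-1,d}\ge n$, is exactly the non-unisolvent configurations for $\V_p$, which lie outside the scope of the theorem, so it can be discarded.

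Then, in each case, I would compute the smoother matrix of the candidate SPM and check it against $\bM_0$. For $p=2r-1$, with $\M = \SPM{(-1)^r\norm{\bx-\by}^{2r-1}}{\{\bx^{\vect{\gamma}} : |\vect{\gamma}|<r\}}$ one has $\bQ = \bQ_{<r}$ and $\bL = (-1)^r\bD^{(2r-1)}$, hence $\tbL = (-1)^r\widetilde{\bD^{(2r-1)}}$ (Eq.~\eqref{eq:Dtilde}), which is positive semidefinite by conditional positive-definiteness of $(-1)^r\norm{\cdot}^{2r-1}$ \cite{micchelli1986interpolation}; Eq.~\eqref{eq:smoother-spm} then gives $\bM = \bQ_{<r}\bQ_{<r}^\top + (-1)^r\widetilde{\bD^{(2r-1)}}\big((-1)^r\widetilde{\bD^{(2r-1)}}+\sigma^2\bI\big)^{-1}$, which is the lemma's expression. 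For $p=2m$ even, with $l_p(\bx,\by) = \sum_{|\ba|=m,|\bb|=m}\bar{W}_m(\ba,\bb)\bx^{\ba}\by^{\bb}$ and $\V_p = \{\bx^{\vect{\gamma}} : |\vect{\gamma}|<m\}$, we have $\bL = \bV_m\bar{\bW}_m\bV_m^\top$ and $\bQ = \bQ_{<m}$, and using the QR block identity $(\bI-\bQ_{<m}\bQ_{<m}^\top)\bV_m = \bQ_m\bQ_m^\top\bV_m$ from Section~\ref{sec:bases-orderings} one gets $\tbL = \bP_m$ (the matrix of Eq.~\eqref{eq:eigenvec-lim}); Eq.~\eqref{eq:smoother-spm} then reproduces $\bA + \bB\matr{\Gamma}\bB^\top$, with $\bB$ the nonzero eigenvectors of $\bP_m$, the overall scale $\gamma_0$ being absorbed by $\propto$. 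The odd case $p=2m+1$ is the degenerate limit $\matr{\Gamma}=0$: $l_p=0$, $\V_p$ is the full polynomial block up to the relevant degree, and $\bM_0$ is a pure orthogonal projection, i.e., exactly the smoother of an unpenalised least-squares polynomial fit. For $p>2r-1$, Lemma~\ref{lem:smoother-multivariate} gives $\bM_0=\bI$, so the fit interpolates for every $\sigma^2$; I would identify the interpolant by passing $\sigma^2\to 0$ in the $p=2r-1$ model (respectively in the polynomial model when $r=\infty$), obtaining polyharmonic-spline (resp. polynomial) interpolation, and note that this interpolant exists precisely when $\X$ is poised for the relevant space — hence the caveat in the statement.

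The hard part will be the index bookkeeping in the even penalised case: one must track, through the flat-limit expansions of the eigenvectors established in \cite{BarthelmeUsevich:KernelsFlatLimit}, exactly which block of the multivariate Vandermonde/QR decomposition survives unpenalised, which block is the regularised one, and which Wronskian Schur complement $\bar{\bW}_l$ governs it — and then verify that this Schur complement is precisely the Gram-type matrix appearing as the reduced kernel matrix $\tbL$ of the limiting SPM, i.e., that the orthogonal-polynomial basis implicitly selected by the kernel is the one encoded in $\bar{\bW}_l$. A secondary difficulty is the $p>2r-1$ case, where $\bM_0=\bI$ by itself does not determine the off-sample behaviour of the limiting predictive function, so the identification of the interpolant (and the discussion of when it fails to exist, which can genuinely happen for degenerate geometries) requires a slightly more careful argument than the other three cases. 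The remaining steps do not go beyond the manipulations already used in the univariate proof.
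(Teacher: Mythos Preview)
Your proposal is correct and follows exactly the route the paper takes: the paper's own proof is a two-line remark that it is ``nearly identical'' to the univariate argument, with Lemma~\ref{lem:smoother-multivariate} replacing the univariate smoother-matrix computation, and then Lemma~\ref{lem:equivalence-smoother}/Proposition~\ref{prop:pred-eq-smoother-semipar} used to pass from smoother matrices to predictive equivalence. You have in fact written out more of the case-by-case matching (in particular the identification $\tbL=\bP_m$ in the even case via the QR block identity) than the paper does, and correctly flagged the $p>2r-1$ branch as the one where $\bM_0=\bI$ alone does not pin down the off-sample interpolant --- a point the paper also leaves loose.
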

%\begin{theorem}
%  \label{thm:equivalent-kernels-nd}
%Let $k_\varepsilon(\bx,\by)$ a stationary positive-definite kernel for $\bx,\by$ in
%$\R^d$, with regularity parameter $r$.
%Then the following asymptotic equivalence holds:
%\[ k_\varepsilon\varepsilon^{-p} \fleq \SPM{l_p}{\V_p}\]
%where $l_p(\bx,\by)$ and $\V_p$ depend on $p$ and $r$ in the following manner.
%If $p \leq 2r - 1$, then there are three different limits:
%\begin{itemize}
%\item Even case: $p = 2m$. Then $l(\bx,\by) = \sum_{|\ba|=m,|\bb|=m} \bar{W}_m(\ba,\bb) \bx^{\ba}\by^{\bb}$, $\V =
%  \{ \bx^{\vect{\gamma}} \vert\  |\vect{\gamma}| < m \} $ if $m < r$ (penalised polynomial regression)
%\item Odd case: $p = 2m+1$. Then $l(x,y) = 0$, $\V =
%  \{ \bx^{\vect{\gamma}} \vert\  |\vect{\gamma}| < m \}$ if $m
%  <r$ (unpenalised polynomial regression)
%\item Nonsmooth case: $p=2r+1$. Then $l(\bx,\by) = (-1)^r
%  \norm{\bx-\by}^{2r-1}$, $\V = \{ \bx^{\vect{\gamma}} \vert\  |\vect{\gamma}| < r
%  \} $ (polyharmonic spline regression)
%\end{itemize}
%The case $p>2r+1$ leads to an interpolant independently of the value of
%$\sigma^2$ and is not meaningful. 
%\end{theorem}
\begin{proof}
  Nearly identical to the proof of \ref{thm:equivalent-kernels-1d}. The only
  difference is that lemma \ref{lem:smoother-multivariate} is used in the part
  labelled ``Final step of the proof''.
\end{proof}

We show in appendix \ref{sec:schur-compl-wronskians} that for separable kernels the Schur complements of the
Wronskian (eq. \eqref{eq:wronskian-schur}) are actually diagonal. For the
Gaussian kernel a further simplification is possible, and gives a very compact
limit result. The  ``polynomial kernel'' of order $m$ is
\[ \rho_m(\bx,\by) = (\bx^\top\by)^m \]
and its associated reproducing kernel Hilbert space is the set of monomials
in $\R^d$ of degree $m$.% \simon{need to check this}.

\begin{corollary}[Flat limit of Gaussian kernels]
  \label{cor:flat-limit-gaussian}
  For the Gaussian kernel in $\R^d$, the following equivalence holds as
  $\flatlim$:
  \begin{enumerate}
  \item For even $p = 2m$, \[ k_\varepsilon\varepsilon^{-p} \fleq \SPM{
        \rho_m }{ \{ \bx^{\vect{\gamma}} \vert\  |\vect{\gamma}| < m \}}\]
  \item For odd $p = 2m+1$
    \[ k_\varepsilon\varepsilon^{-p} \fleq \SPM{
        0 }{ \{ \bx^{\vect{\gamma}} \vert\  |\vect{\gamma}| < m \}}\]
  \end{enumerate}
\end{corollary}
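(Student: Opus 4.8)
The plan is to read the corollary off Theorem~\ref{thm:equivalent-kernels-nd}, the only extra ingredient being an explicit evaluation of the Schur complement $\bar{W}_m$ of the Gaussian Wronskian. First I would note that the Gaussian kernel is $C^\infty$ in both arguments, so its regularity parameter is $r=\infty$; hence for every integer $p$ we are always in one of the first two ``$p<2r-1$'' cases of Theorem~\ref{thm:equivalent-kernels-nd}, never in the spline or interpolation regimes. In the odd case $p=2m+1$ the theorem gives directly $k_\varepsilon\varepsilon^{-p}\fleq\SPM{0}{\{\bx^{\vect{\gamma}}:|\vect{\gamma}|<m\}}$, which is exactly part~(2), so nothing more is needed there.

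In the even case $p=2m$, Theorem~\ref{thm:equivalent-kernels-nd} gives $k_\varepsilon\varepsilon^{-p}\fleq\SPM{l_p}{\{\bx^{\vect{\gamma}}:|\vect{\gamma}|<m\}}$ with $l_p(\bx,\by)=\sum_{|\ba|=m,\,|\bb|=m}\bar{W}_m(\ba,\bb)\,\bx^{\ba}\by^{\bb}$; the parametric parts already coincide with those advertised for $\rho_m$, so it remains only to show $l_p\propto\rho_m$. I would compute $\bar{W}_m$ by exploiting the factorisation of the Gaussian (discarding the irrelevant prefactor $\gamma$ and the $\varepsilon$-scaling, both of which $\fleq$ absorbs): $e^{-\norm{\bx-\by}^2}=\phi(\bx)\,\rho(\bx,\by)\,\phi(\by)$ with $\phi(\bx)=e^{-\norm{\bx}^2}$ and $\rho(\bx,\by)=e^{2\bx^\top\by}$. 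Expanding $\rho$ via the exponential series and the multinomial theorem yields $\rho(\bx,\by)=\sum_{\ba}\frac{2^{|\ba|}}{\ba!}\,\bx^{\ba}\by^{\ba}$, so, recalling the convention in~\eqref{eq:wronskian_nd}, the Wronskian of $\rho$ is \emph{diagonal}, with $(\ba,\ba)$-entry $2^{|\ba|}/\ba!$; being diagonal, its degree-$m$ block equals its own Schur complement, $\bar{W}_m[\rho]=\diag\!\big(2^m/\ba!\big)_{|\ba|=m}$.

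The crucial step is then to show that multiplying $\rho$ by the gauge factors $\phi(\bx)\phi(\by)$ leaves $\bar{W}_m$ unchanged. On coefficient sequences, multiplication by $\phi(\bx)$ (resp. $\phi(\by)$) acts as a matrix $\bF$ (resp. $\bF^\top$) that is block-lower-triangular with respect to the grading by total degree and has identity diagonal blocks, because $\phi(0)=1$; hence the Gaussian's degree-$\le m$ Wronskian factors as $\bF_{\le m}\,\matr{W}_{\le m}[\rho]\,\bF_{\le m}^\top$. A short linear-algebra computation — of the same kind carried out in appendix~\ref{sec:schur-compl-wronskians} — shows that the Schur complement of the bottom-right block is invariant under congruence by such a unitriangular matrix, so $\bar{W}_m$ for the Gaussian also equals $\diag(2^m/\ba!)_{|\ba|=m}$. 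Substituting back and using $(\bx^\top\by)^m=\sum_{|\ba|=m}\binom{m}{\ba}\bx^{\ba}\by^{\ba}$,
\[ l_p(\bx,\by)=\sum_{|\ba|=m}\frac{2^m}{\ba!}\,\bx^{\ba}\by^{\ba}=\frac{2^m}{m!}\,(\bx^\top\by)^m=\frac{2^m}{m!}\,\rho_m(\bx,\by), \]
so $l_p\propto\rho_m$ and the constant $2^m/m!$ is swallowed by the $\propto$ in the $\fleq$ notation, giving part~(1).

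The main obstacle, and the only non-routine step, is the gauge-invariance claim of the last paragraph: one must set up carefully the correspondence between multiplication by an analytic function taking value $1$ at the origin and a degree-graded unitriangular transformation of the Wronskian, and then invoke (or reprove) the Schur-complement invariance lemma. Everything else — the $C^\infty$ regularity of the Gaussian, the odd case, the series expansion of $\rho$, and the multinomial identity — is routine.
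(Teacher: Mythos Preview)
Your proof is correct and takes a genuinely different route from the paper's. The paper proceeds via the spectral (moment) representation: it first invokes Theorem~\ref{thm:heltonlasserre} (Helton--Lasserre--Putinar) to show that for \emph{any} separable kernel the Schur complement $\bar{\bW}_m$ is diagonal, then computes the diagonal entries $\bar{W}_m(\ba,\ba)$ by relating the Wronskian to the moment matrix $\bM_m$, expressing $\bM_m^{-1}$ through the coefficients of the orthonormal (Hermite) polynomials of the Gaussian measure, and reading off $\bar{W}_m(\ba,\ba)=2^{|\ba|}/\ba!$ from the leading coefficients of tensor-product Hermite polynomials.

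Your argument bypasses all of this machinery by exploiting the algebraic identity $e^{-\norm{\bx-\by}^2}=e^{-\norm{\bx}^2}\,e^{2\bx^\top\by}\,e^{-\norm{\by}^2}$: the Wronskian of the middle factor is diagonal by direct inspection of its Taylor series, and the outer factors act by congruence with a degree-graded unitriangular matrix, under which the Schur complement is invariant (a two-line block computation). This is shorter and more elementary for the Gaussian specifically, and it makes transparent \emph{why} the polynomial kernel $\rho_m$ appears --- it is literally the degree-$m$ slice of $e^{2\bx^\top\by}$. The paper's route, on the other hand, is not tailored to the Gaussian: the diagonality statement and the orthogonal-polynomial computation apply to any separable kernel, so it yields a template for other product kernels where no convenient factorisation like yours is available. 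Your gauge-invariance lemma is the one step that deserves a careful write-up, but as you note it is routine once the unitriangularity of the multiplication operator is established.
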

The proof is given in section \ref{sec:schur-compl-wronskians}.
The corollary states that in the flat limit, depending on the level of
regularisation, the GP model is either plain (multivariate) polynomial
regression, or a SPM with a parametric part of polynomials of degree $< m$, and
a ``non-parametric'' part which is given by the polynomial kernel of degree $m$. 

\subsection{Numerical results}
\label{sec:numerical-results-2d}

We illustrate our results with a few simulations in dimension 2. We generated a
set of 30 random locations in $[0,1]^2$ (sampled uniformly and independently),
and noiseless observations $y_i$ from the function
$f(x_1,x_2)=\exp\left(-3((x_1-0.5)^2+(x_2-0.5)^2
\right)\sin\left(3(x_1+x_2)\right)$.
Fig. \ref{fig:numresults-2d} shows (on the left) the contour lines of two GP regressions with $\varepsilon=1$, one with a
Gaussian kernel, the other with a Matérn kernel with $r=3$. $\gamma$ has been
adjusted so that the degrees of freedom equal approximately 12 in both cases. On the
right, the corresponding matched approximations (as in section
\ref{sec:towards-practical}), respectively multivariate polynomials and
polyharmonic splines. Over this range and for this value of $\varepsilon$ the
agreement is excellent (but see later for caveats). 
\begin{figure}
  \centering
  \includegraphics[width=12cm]{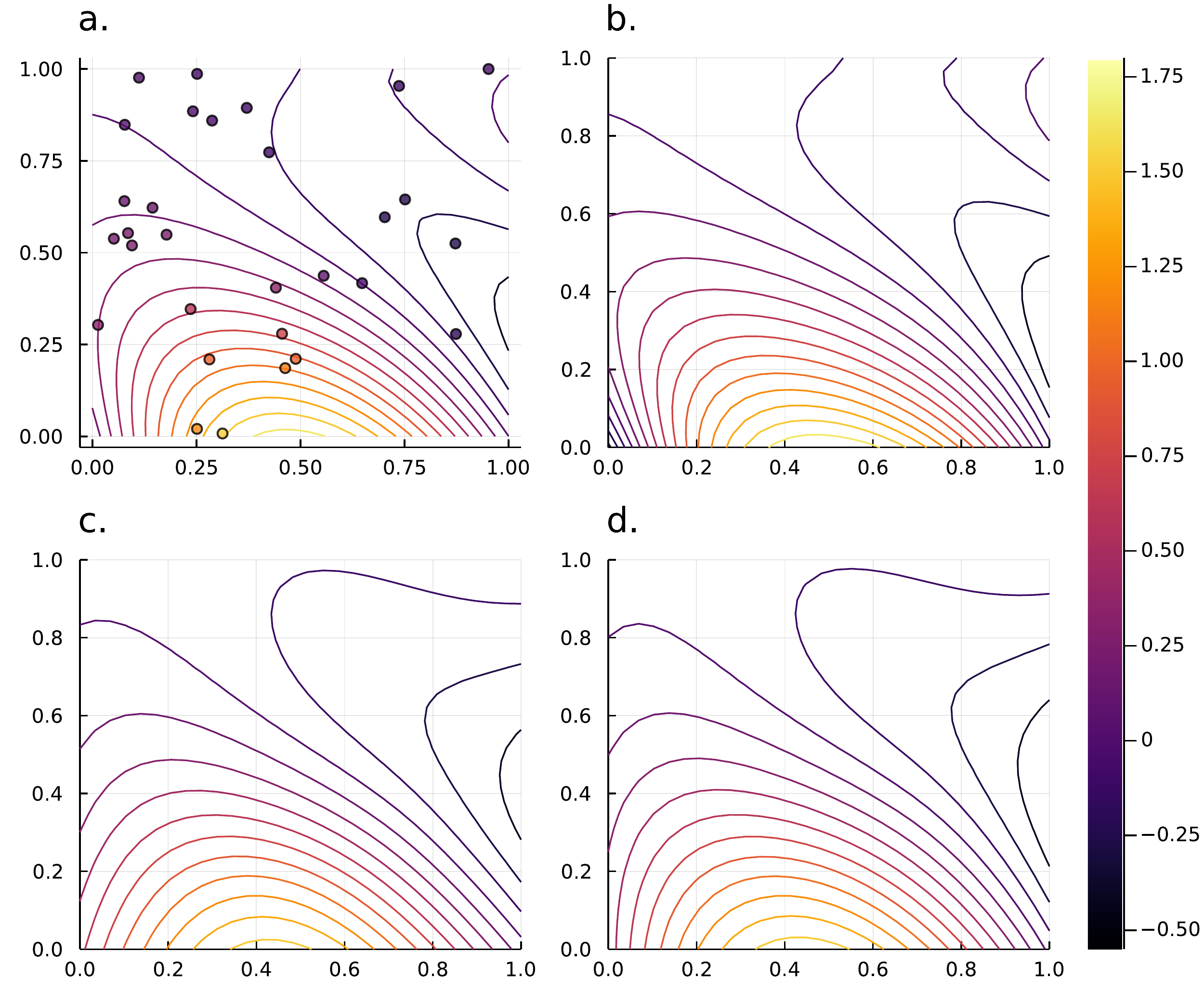}
  \caption{Gaussian process fits in $d=2$ and their matched approximations.
  a. Gaussian process fit with a Gaussian kernel (the measurement locations are
  plotted with value $y_i$ as a colour scale) b. Matched approximation for the
  Gaussian kernel. c. Gaussian process fit with a Matérn kernel ($r=3$). d.
  Matched approximation (polyharmonic splines). }
  \label{fig:numresults-2d}
\end{figure}

While the matched approximation may be surprinsingly accurate close to
the measurement locations, polynomials and polyharmonic splines generally
diverge as $\norm{\bx} \rightarrow \infty$, unlike GP models, which return to a
baseline of 0. Consequently, the matched approximations are very inaccurate far
from the data, as shown in fig. \ref{fig:numresults-2d-zoomed-out} , which is just a zoomed-out version of fig.
\ref{fig:numresults-2d}. There are ways of tapering the matched approximation
to prevent divergence, but we leave the details for future work. 

\begin{figure}
  \centering
  \includegraphics[width=12cm]{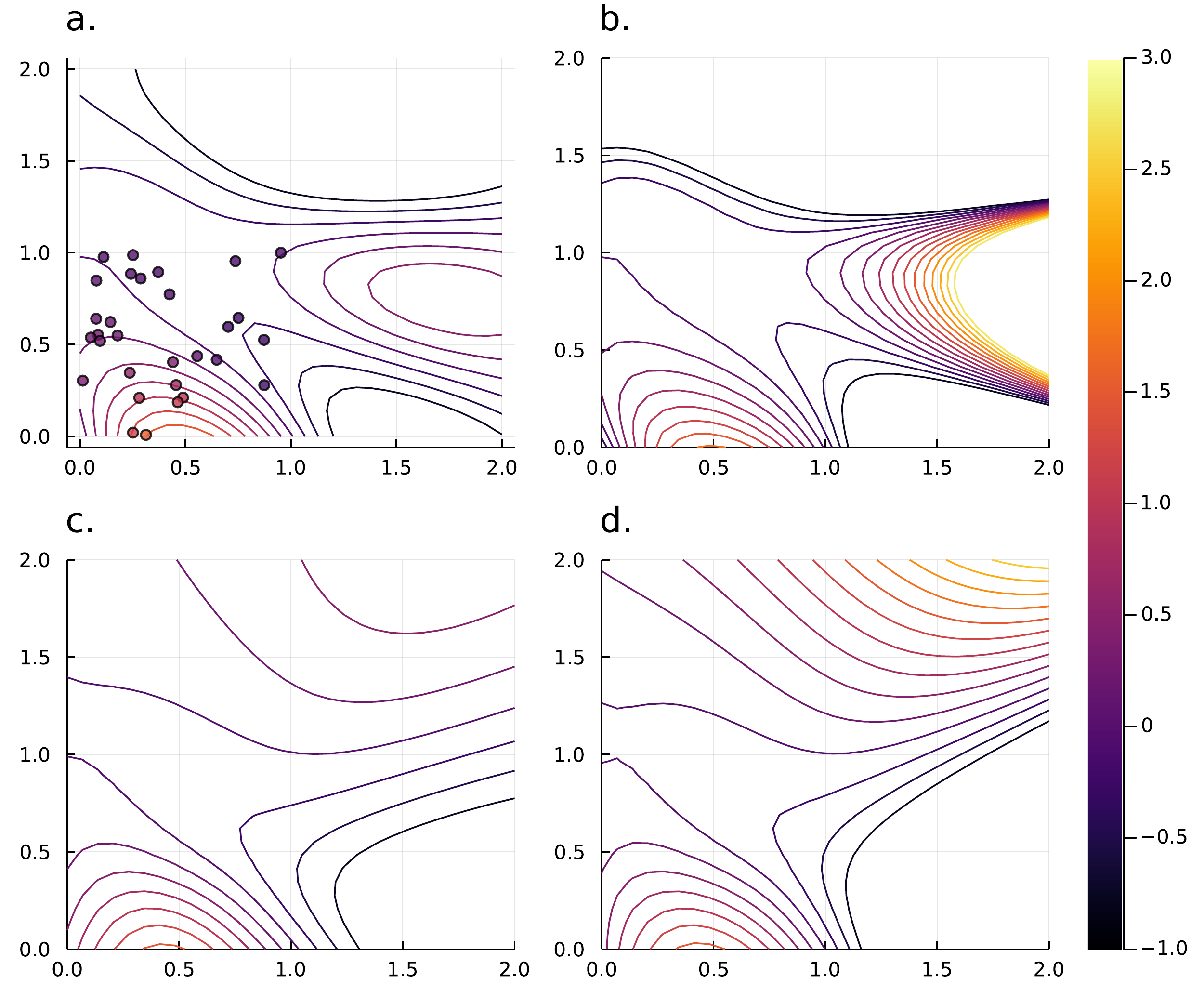}
  
  \caption{Same contents as in fig. \ref{fig:numresults-2d}, but over a broader
    domain, to show the     divergence in the matched approximations.}
  \label{fig:numresults-2d-zoomed-out}
\end{figure}

Finally, in the introduction we described the flat limit in terms of the family of fits,
seen as a parametric curve (parameterised by $\gamma$). The predictions for the
GP with a Gaussian kernel ``go through'' the polynomial predictions in the
limit. The same holds true in the multidimensional case, as per theorem
\ref{thm:equivalent-kernels-nd}. For some appropriate value of $\gamma$, the
prediction of the GP will come to match that of the model
$\SPM{0}{\sum_{|\vect{\alpha}| \leq k} \vect{x}^{\vect{\alpha}}}$, a multivariate polynomial model of
degree $k$. We show this on fig. \ref{fig:asymptotic-curve-gaussian}, which is similar to fig.
\ref{fig:pred-curve-gaussian}: the prediction of the model at locations
$\vect{x}_a = (0.2,0.1)$ and $\vect{x}_b=(0.8,0.8)$ are plotted for different
values of $\gamma$ and fixed $\varepsilon$.

For kernels with finite regularity index, theorem
\ref{thm:equivalent-kernels-nd} shows that the behaviour in the flat limit
depends on $\gamma$. For low values of $\gamma$, they behave like polynomial
models. For high values, like polyharmonic splines. This is the behaviour that
appears on fig. \ref{fig:asymptotic-curve-matern}. 

\begin{figure}
  \centering
  \includegraphics[width=10cm]{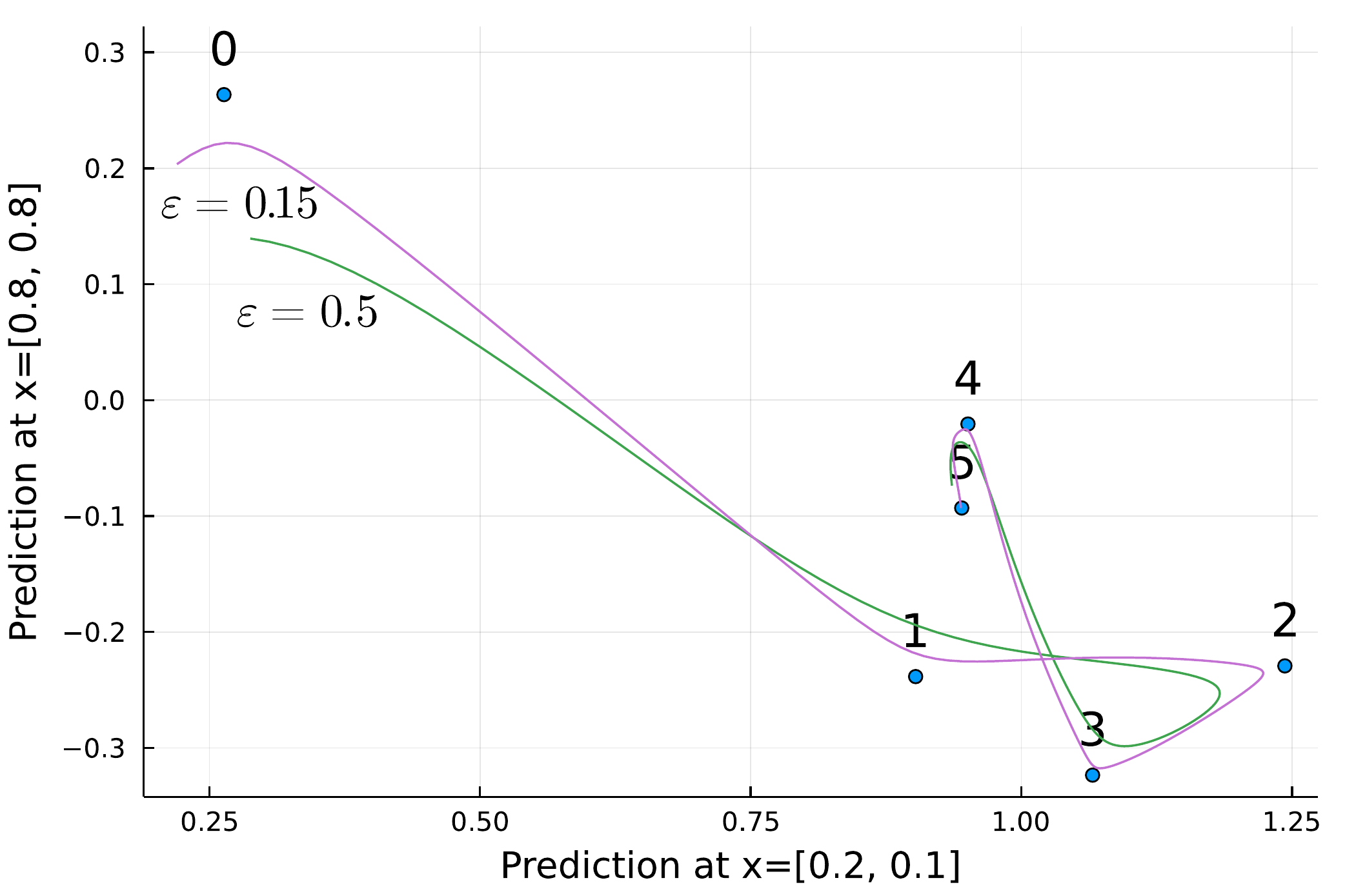}
  \caption{Prediction of GP model (with Gaussian kernel) vs. polynomial models.
    We show the predictions of the model for the data shown on fig.
    \ref{fig:numresults-2d} for a range of values of  $\gamma$ (continous
    curves), and two different values of $\varepsilon$. The labelled points are
    the prediction of the polynomial models of degree $0$ to $5$. As per theorem
  \ref{thm:equivalent-kernels-nd}, as $\flatlim$, the continuous curve must
  go through the labelled points.}
  \label{fig:asymptotic-curve-gaussian}
\end{figure}

\begin{figure}
  \centering
  \includegraphics[width=10cm]{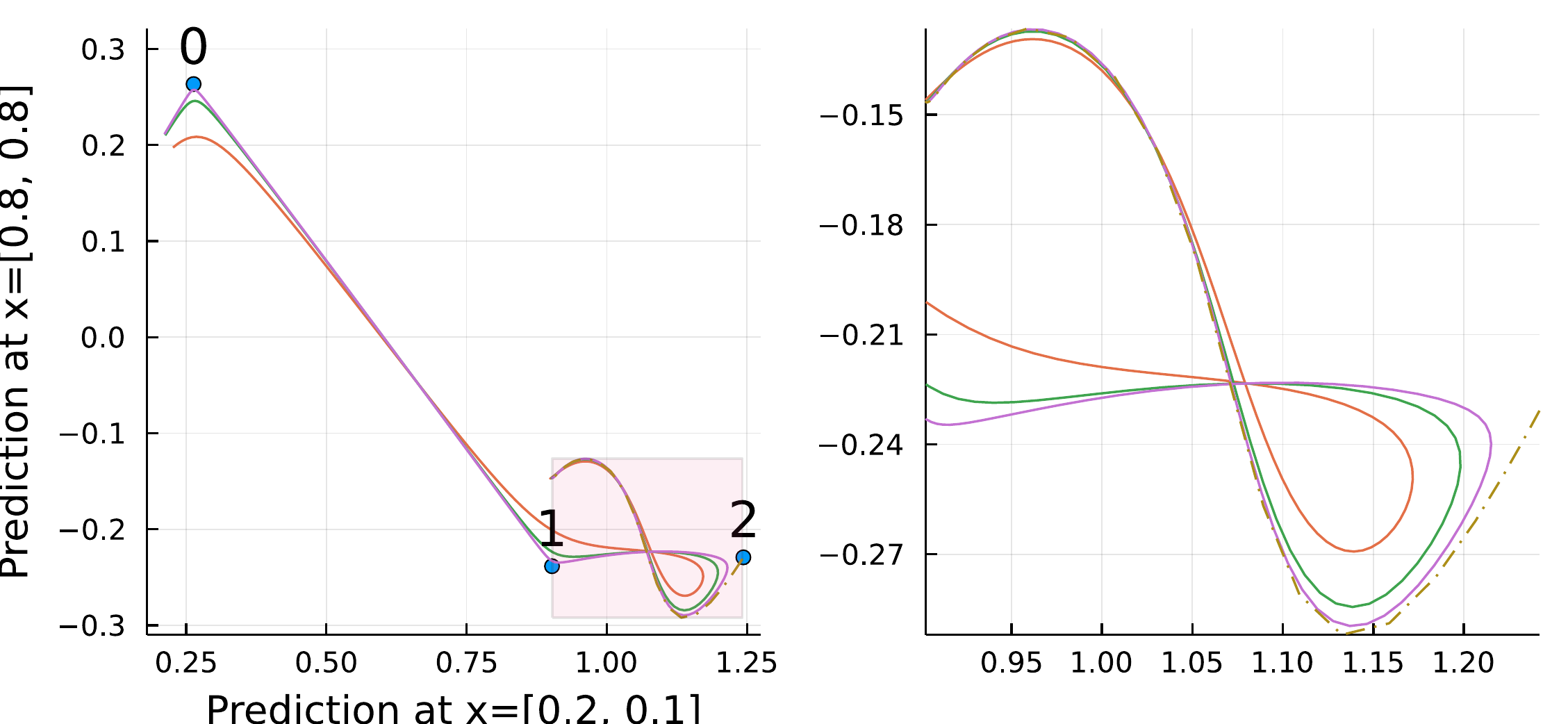}
  \caption{Prediction of GP model (with Matérn covariance, $r=3$) vs. polynomial
    models and polyharmonic spline kernels.     The figure uses the same data and principle as in
    fig. \ref{fig:asymptotic-curve-gaussian}, but according to theorem
    \ref{thm:equivalent-kernels-nd}, this GP model matches polynomials of degree
    up to 2 at low $\gamma$, and polyharmonic splines at higher values of
    $\gamma$. Accordingly, we show the polynomial predictions for degree up to
    two (labelled points). The three curves are for decreasing values of
    $\varepsilon$, $\varepsilon=0.5$ is in red, $\varepsilon=0.15$ in green and
    $\varepsilon=0.05$ in purple. The panel to the right shows a zoomed-in
    version of the rectangle highlighted in the first panel. The dotted curve
    corresponds to the polyharmonic splines. }
  \label{fig:asymptotic-curve-matern}
\end{figure}

\section{Conclusion}
\label{sec:conclusion}

The flat limit of Gaussian process regression highlights the very strong
connections GP methods share with classical methods like polynomial regression
and smoothing splines. The fact that, at least in certain cases, the flat limit
gives a very good approximation for large values of $\varepsilon$ shows that it may
be useful in practice once the limits of the approximation are better
understood. 

We conclude with some open questions and directions. First, while smoothing
splines in $d=1$ can be implemented at cost $\O(n)$ \cite{reinsch1967smoothing},
polyharmonic splines in $d>1$ have cost $\O(n^3)$. On the other hand, for Matérn
models with low regularity coefficient, there exist efficient (approximate)
methods based on a stochastic PDE formulation \cite{lindgren2011explicit}. Since
such GP models have polyharmonic splines as their flat limit, this suggests that
stochastic PDE methods should be applicable.

Finally, if a tractable ``sharp limit'' ($\varepsilon \rightarrow \infty$)
expansion were available, there might be a way of finding good 
approximations that work over a broad range of values of $\varepsilon$, for
instance via matched asymptotic approximations. Such an approximation would be
both interesting theoretically and practically useful. 

\section{Appendix}
\label{sec:proofs}

\subsection{Semi-parametric models as limits}
\label{sec:semi-param-models-as-limits}

In this section we introduce SPMs  as
a limit (we do not claim that this is particularly original).
This section parallels section 4 in \cite{tremblay2021extended}. Readers
familiar with DPPs may be interested to note that extended L-ensembles are to
semi-parametric GPs what L-ensembles are to GPs, see \cite{fanuel2020determinantal}
\footnote{Sampling measurement
  locations from the appropriate extended L-ensemble guarantees for instance
  that the posterior distribution is proper (integrable). }. 

Our definition of regression with semi-parametric Gaussian fields is as follows: let
$l(x,y)$ be a kernel (not necessarily positive definite, as we will see), and
$v_1(x),\ldots,v_r(y)$ a set of basis functions. Then semi-parametric GP regression is
just GP regression with the kernel 
\[ \kappa_\varepsilon(x,y) = l(x,y) + \varepsilon^{-1} \sum_i^p v_i(x)v_i(y) \]
in the limit $\flatlim$. Even though the prior variance goes to infinity along
some directions, the posterior distribution is generally well-defined, and
quantities like the smoother matrix tend to finite limits. 
Although the construction naturally works for $l(x,y)$ positive definite, recall
that this
is not a requirement, and $l(x,y)$ may have negative eigenvalues, so long as
they align with the subspace spanned by the $v_i$'s.

We introduce some notation, borrowed from \cite{tremblay2021extended}, that will
be used throughout this section. A non-negative pair is the discrete counterpart
to a SPM $\SPM{l}{\V}$. 
\begin{definition}
	\label{def:nnp}
  A Nonnegative Pair, noted $\NNP{\bL}{\bV}$ is a pair $\bL \in \R^{n \times n}$, $\bV \in \R^{n
    \times p}$, $0\leq p\leq n$, such that $\bL$ is symmetric and conditionally positive semi-definite with respect to
  $\bV$, and $\bV$ has full column rank. Wherever a NNP $\NNP{\bL}{\bV}$ appears below, we consistently use the following notation: 
\begin{itemize}
\item $\bQ \in \R^{n \times p}$ is an
  orthonormal basis of  $\mspan \bV$, such that $\bI - \bQ\bQ^\top$
  is a projector on $\orth \bV$
\item $\widetilde{\bL} = (\bI - \bQ\bQ^\top)\bL(\bI -
  \bQ\bQ^\top)\in \R^{n \times n}$ is also symmetric and thus diagonalisable. From  \cite[Prop. 2.3]{tremblay2021extended} we know that all its eigenvalues are non-negative. We will denote by $q$ the rank of $\widetilde{\bL}$. Note that $q \le n-p$ as the $p$ columns of $\bQ$ are trivially eigenvectors of $\widetilde{\bL}$ associated to $0$. We write
  \[
    \widetilde{\bL} = \tbU\tbLam\tbU^\top
  \] 
  its truncated spectral decomposition; where $\tbLam=\text{diag}(\widetilde{\lambda}_1,\ldots,\widetilde{\lambda}_q)\in \mathbb{R}^{q\times q}$ and $\tbU \in \mathbb{R}^{n\times q}$ are the diagonal matrix of nonzero eigenvalues and the matrix of the corresponding eigenvectors of $\widetilde{\bL}$, respectively.
\end{itemize}
\end{definition}
Saddle-point systems feature prominently in our formulas:
\begin{definition}
  The saddle-point system associated with a NNP $\NNP{\bL}{\bV}$ is the
  $(2n)\times (n+p)$ matrix:
  \[ S(\bL,\bV) = 
    \begin{pmatrix}
      \bL & \bV \\
      \bV^\top & \matr{0}
    \end{pmatrix}
  \]
\end{definition}
It has the same form as the system that appears in polyharmonic  spline interpolation, and this
is no accident. 
Our first step will be to find the limit of the smoother matrix, which here
reads:
\begin{equation}
  \label{eq:smoother-semi-parametric}
  \bM_\varepsilon = (\bL + \varepsilon^{-1}\bV\bV^\top)(\bL + \sigma^2\bI + \varepsilon^{-1}\bV\bV^\top)^{-1} = (\varepsilon\bL + \bV\bV^\top)\left( \varepsilon (\bL + \sigma^2\bI) +\bV\bV^\top \right)^{-1}
\end{equation}
To do so we use matrix perturbation theory (treating $\varepsilon \bL$ as a perturbation), and specifically the 
approach of \cite{avrachenkov2013analytic}. The difficulty lies in dealing with
$\left( \varepsilon (\bL + \sigma^2\bI) +\bV\bV^\top \right)^{-1}$, which is divergent as
$\flatlim$ since $\bV\bV^\top$ is not invertible. Because of that, it does not
admit a power series. However, it does admit a Laurent series, which is an
expansion involving negative orders of $\varepsilon$. We do not need the theory
developed in \cite{avrachenkov2013analytic} in its full generality for our
purposes here. We introduce a simplified version tailored to our needs.
\begin{theorem}
  \label{thm:laurent-expansion}
  Let $\bA(\varepsilon) = \bV\bV^\top + \varepsilon \bC$, invertible for
  $\varepsilon > 0$, with $\bC \in \R^{n\times
  n }$ symmetric and $\bV \in \R^{n \times p}$,  $p<n$, so that $\bV\bV^\top$ is non-invertible. Then:
  \begin{equation}
    \label{eq:laurent-expansion}
    \bA(\varepsilon)^{-1} = \varepsilon^{-1} (\bB_0 + \varepsilon \bB_1 + \varepsilon^2 \bB_2 + \ldots)
  \end{equation}
  This is a Laurent expansion around $\varepsilon=0$, and its terms $\bB_0,
  \bB_1, \ldots $ are the solutions of the  
  following equation, called the ``master equation'':
  \begin{align*}
    \bV\bV^\top\bB_0 &= \matr{0} \\
    \bV\bV^\top \bB_1 + \bC \bB_0 &= \bI \\ 
    \bV\bV^\top \bB_2 + \bC \bB_1 &= \matr{0} \\
    \bV\bV^\top \bB_3 + \bC \bB_2 &= \matr{0} \\
    \vdots
  \end{align*}
  or equivalently:
  \begin{equation}
    \label{eq:master-eq-compact}
    \bV\bV^\top \bB_i + \bC \bB_{i-1} = \delta_{i,1} \bI 
  \end{equation}
  for $i$ going from $1$ to $\infty$.
  In addition, all the terms $\bB_0,\bB_1,\ldots$ are symmetric. 
\end{theorem}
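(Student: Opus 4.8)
The plan is to exhibit $\bA(\varepsilon)^{-1}$ as $\varepsilon^{-1}$ times a matrix that is analytic at $\varepsilon=0$, by passing to a basis adapted to $\mspan\bV$; once that is done, the master equation and the symmetry of the $\bB_i$ fall out by coefficient-matching and a transpose argument. First I would pass to an orthonormal basis whose first $p$ vectors are the columns of $\bQ$ (an orthonormal basis of $\mspan\bV$) and whose last $n-p$ are the columns of $\Qort$ (an orthonormal basis of $\orth\bV$). Because $\bV^\top\Qort=\matr 0$, in this basis $\bV\bV^\top$ becomes $\blkdiag(\bN,\matr 0)$ with $\bN=\bQ^\top\bV\bV^\top\bQ=(\bQ^\top\bV)(\bQ^\top\bV)^\top$, which is positive definite since $\bV$ has full column rank. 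Writing $\bC_{11}=\bQ^\top\bC\bQ$, $\bC_{12}=\bQ^\top\bC\Qort$, $\bC_{22}=\Qort^\top\bC\Qort$, the matrix $\bA(\varepsilon)$ becomes, in this basis,
\[
  \begin{pmatrix}\bN+\varepsilon\bC_{11} & \varepsilon\bC_{12}\\ \varepsilon\bC_{12}^\top & \varepsilon\bC_{22}\end{pmatrix}
  = \begin{pmatrix}\bI & \matr 0\\ \matr 0 & \varepsilon\bI\end{pmatrix}
    \begin{pmatrix}\bN+\varepsilon\bC_{11} & \varepsilon\bC_{12}\\ \bC_{12}^\top & \bC_{22}\end{pmatrix}.
\]

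Call the second factor $\matr E(\varepsilon)$: it is affine in $\varepsilon$, and $\matr E(0)$ is block lower-triangular with $\det\matr E(0)=\det\bN\cdot\det\bC_{22}$, hence invertible provided $\bC_{22}=\Qort^\top\bC\Qort$ is invertible. Under that proviso $\matr E(\varepsilon)^{-1}$ is analytic near $\varepsilon=0$ (by the adjugate/Cramer formula, or by a Neumann series about $\matr E(0)$), so it has a convergent power series $\matr E(\varepsilon)^{-1}=\sum_{j\ge 0}\varepsilon^j\bF_j$. Inverting the displayed factorisation, the inverse of $\bA(\varepsilon)$ in this basis equals $\matr E(\varepsilon)^{-1}\blkdiag(\bI,\varepsilon^{-1}\bI)$, which is $\varepsilon^{-1}$ times a power series in $\varepsilon$; changing back to the original basis preserves this form, giving \eqref{eq:laurent-expansion} with a pole of order at most $1$.

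With the Laurent form $\bA(\varepsilon)^{-1}=\varepsilon^{-1}\sum_{i\ge 0}\varepsilon^i\bB_i$ in hand, I would substitute it into $\bA(\varepsilon)\,\bA(\varepsilon)^{-1}=\bI$ and expand
\[
  (\bV\bV^\top+\varepsilon\bC)\,\varepsilon^{-1}\!\sum_{i\ge 0}\varepsilon^i\bB_i
  = \varepsilon^{-1}\bV\bV^\top\bB_0+\sum_{i\ge 1}\varepsilon^{i-1}\bigl(\bV\bV^\top\bB_i+\bC\bB_{i-1}\bigr).
\]
Matching powers of $\varepsilon$ against $\bI$: the coefficient of $\varepsilon^{-1}$ gives $\bV\bV^\top\bB_0=\matr 0$, and for $i\ge 1$ the coefficient of $\varepsilon^{i-1}$ gives $\bV\bV^\top\bB_i+\bC\bB_{i-1}=\delta_{i,1}\bI$ — exactly \eqref{eq:master-eq-compact}; uniqueness of Laurent coefficients identifies these $\bB_i$ with the ones from the block construction. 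Symmetry is then immediate: $\bA(\varepsilon)$ is symmetric for every $\varepsilon>0$, hence so is $\varepsilon\bA(\varepsilon)^{-1}=\sum_{i\ge 0}\varepsilon^i\bB_i$, and transposing and equating coefficients forces $\bB_i^\top=\bB_i$ for all $i$.

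The one step that needs care — and the place where more than bare invertibility of $\bA(\varepsilon)$ for $\varepsilon>0$ is used — is the claim that the pole is \emph{simple}. This fails for a general symmetric $\bC$: for instance with $n=2$, $\bV\bV^\top=\blkdiag(1,0)$ and $\bC=\left(\begin{smallmatrix}0&1\\1&0\end{smallmatrix}\right)$ one gets $\bA(\varepsilon)^{-1}=\left(\begin{smallmatrix}0&\varepsilon^{-1}\\\varepsilon^{-1}&-\varepsilon^{-2}\end{smallmatrix}\right)$, a genuine double pole, although $\bA(\varepsilon)$ is invertible for every $\varepsilon\ne 0$. The simple-pole conclusion uses precisely that $\bC$ is non-degenerate on $\orth\bV$, i.e. that $\bC_{22}$ is invertible. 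I would therefore state this as a hypothesis, noting that it holds in every application of Theorem \ref{thm:laurent-expansion} in the paper: in \eqref{eq:smoother-semi-parametric} one has $\bC=\bL+\sigma^2\bI$, so $\bC_{22}=\Qort^\top\bL\Qort+\sigma^2\bI\succeq\sigma^2\bI$, which is positive definite because $\Qort^\top\bL\Qort$ (the compression of $\tbL$) is positive semidefinite by conditional positive-definiteness (Def. \ref{def:unisolvent-cpd}) and $\sigma^2>0$.
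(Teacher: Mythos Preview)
Your argument is correct and more complete than the paper's own treatment. The paper gives only a two-line sketch: existence of the Laurent expansion is attributed to Cramer's rule (since $\operatorname{adj}\bA(\varepsilon)$ and $\det\bA(\varepsilon)$ are polynomial in $\varepsilon$, the inverse is rational with a pole at $0$), and the master equation is obtained by substituting the expansion into $\bA\bA^{-1}=\bI$ and matching orders. Your derivation of the master equation and of symmetry is the same as the paper's.

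Where you genuinely go further is in the existence step. Cramer's rule alone only gives a pole of some order $k\ge 1$ (the multiplicity of the zero of $\det\bA(\varepsilon)$ at $\varepsilon=0$); it does not by itself force $k=1$. Your block factorisation in the $(\bQ,\Qort)$ basis makes the pole order explicit and shows that it is simple \emph{if and only if} $\bC_{22}=\Qort^\top\bC\Qort$ is invertible. Your counterexample is valid and shows that the theorem as stated --- with only ``$\bA(\varepsilon)$ invertible for $\varepsilon>0$'' --- is literally false: one needs the extra hypothesis that $\bC$ be nondegenerate on $\orth\bV$. You are also right that this hypothesis holds in every use the paper makes of the theorem, since there $\bC=\bL+\sigma^2\bI$ and $\Qort^\top\bL\Qort\succeq 0$ by conditional positive-definiteness, so $\bC_{22}\succeq\sigma^2\bI\succ 0$. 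In short, your route is the more careful one: it buys an explicit sufficient condition for the simple pole, patches a gap in the statement, and verifies that the gap is harmless in context.
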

The proof can be found in \cite{avrachenkov2013analytic} but straightforward to
sketch. The existence of the Laurent expansion (eq.
\eqref{eq:laurent-expansion}) is a consequence of Cramer's rule. The master
equation is obtained by plugging eq. 
\eqref{eq:laurent-expansion} into $\bA \bA^{-1} = \bI$ and matching terms by
order. 

Using the Laurent expansion we find:
\begin{corollary}
  \label{cor:limit-smoother-matrix-semipar}
  Let $\NNP{\bL}{\bV}$ a NNP. The smoother matrix $\bM_\varepsilon$ has the following expansion in
  small $\varepsilon$:
  \begin{eqnarray}
    \label{eq:smoother-semi-parametric-lim}
    \bM_\varepsilon &= &\bQ \bQ^\top + \tbU \tbLam \left( \tbLam + \sigma^2 \bI \right)^{-1} \tbU^\top + \O(\varepsilon) \\
    &=&\bQ \bQ^\top + \tbL \left( \tbL + \sigma^2 \bI \right)^{-1} + \O(\varepsilon)
  \end{eqnarray}
\end{corollary}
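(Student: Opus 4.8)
The plan is to isolate the singular part of the smoother matrix in a single resolvent and then read off its limit. Starting from \eqref{eq:smoother-semi-parametric}, put $\bN_\varepsilon = \varepsilon(\bL + \sigma^2\bI) + \bV\bV^\top$, so that $\varepsilon\bL + \bV\bV^\top = \bN_\varepsilon - \varepsilon\sigma^2\bI$ and
\[
  \bM_\varepsilon = (\bN_\varepsilon - \varepsilon\sigma^2\bI)\bN_\varepsilon^{-1} = \bI - \sigma^2\bigl(\varepsilon\bN_\varepsilon^{-1}\bigr).
\]
Applying Theorem~\ref{thm:laurent-expansion} with $\bC = \bL + \sigma^2\bI$ gives $\bN_\varepsilon^{-1} = \varepsilon^{-1}(\bB_0 + \varepsilon\bB_1 + \cdots)$; equivalently $\varepsilon\bN_\varepsilon^{-1}$ is analytic at $\varepsilon = 0$ with value $\bB_0$ there, so $\bM_\varepsilon = \bI - \sigma^2\bB_0 + \O(\varepsilon)$ and everything reduces to identifying the leading Laurent coefficient $\bB_0 = \lim_{\flatlim}\varepsilon\bN_\varepsilon^{-1}$.

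To compute $\bB_0$, I would pass to an orthonormal basis adapted to $\R^n = \mspan\bV \oplus \orth\bV$, i.e. one in which the projector $\bQ\bQ^\top$ equals $\blkdiag(\bI,\matr{0})$. Writing $\bV = \bQ\bR$, the matrix $\bV\bV^\top$ becomes $\blkdiag(\bR\bR^\top,\matr{0})$ with $\bR\bR^\top$ positive definite, while $\tbL$ becomes $\blkdiag(\matr{0},\bL_{22})$ with $\bL_{22}$ the $\orth\bV$-block of $\bL$; the crucial point is that $\bL_{22}=\tbL|_{\orth\bV}$ is positive semi-definite by the conditional positive-definiteness of $\bL$ with respect to $\bV$, hence $\bL_{22}+\sigma^2\bI$ is invertible for every $\sigma^2>0$. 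In this basis $\bN_\varepsilon$ is a $2\times 2$ block matrix whose top-left block is $\bR\bR^\top + \O(\varepsilon)$, which stays invertible for small $\varepsilon$, and whose Schur complement onto the second block equals $\varepsilon(\bL_{22}+\sigma^2\bI) + \O(\varepsilon^2)$. Feeding these into the block-inverse formula, the $(2,2)$ block of $\bN_\varepsilon^{-1}$ is $\varepsilon^{-1}(\bL_{22}+\sigma^2\bI)^{-1} + \O(1)$ and all the remaining blocks are $\O(1)$, so $\varepsilon\bN_\varepsilon^{-1} = \blkdiag\bigl(\matr{0},(\bL_{22}+\sigma^2\bI)^{-1}\bigr) + \O(\varepsilon)$. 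Therefore
\[
  \bM_0 = \bI - \sigma^2\bB_0 = \blkdiag\bigl(\bI,\ \bL_{22}(\bL_{22}+\sigma^2\bI)^{-1}\bigr) = \bQ\bQ^\top + \tbL(\tbL+\sigma^2\bI)^{-1},
\]
and inserting the truncated spectral decomposition $\tbL = \tbU\tbLam\tbU^\top$ (which kills the zero-eigenvalue directions, so no term is lost) turns $\tbL(\tbL+\sigma^2\bI)^{-1}$ into $\tbU\tbLam(\tbLam+\sigma^2\bI)^{-1}\tbU^\top$, the claimed expression. An equivalent, purely algebraic route would be to check directly that the symmetric matrix $\bB_0 = \sigma^{-2}\bigl(\bI - \bQ\bQ^\top - \tbL(\tbL+\sigma^2\bI)^{-1}\bigr)$ satisfies $\bV\bV^\top\bB_0 = \matr{0}$ and that $\bI - (\bL+\sigma^2\bI)\bB_0$ has all its columns in $\mspan\bV$, so that the second line of the master equation \eqref{eq:master-eq-compact} is solvable; uniqueness of the Laurent coefficients then identifies this as $\bB_0$.

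The routine parts are the $2\times 2$ block-inverse bookkeeping, and the observation that $\bM_\varepsilon = \bM_0 + \O(\varepsilon)$ is automatic once $\varepsilon\bN_\varepsilon^{-1}$ is known to be analytic at the origin. The one genuinely load-bearing point — and hence the main obstacle — is controlling the pole of $\bN_\varepsilon^{-1}$: one has to know that it is simple and that its residue is exactly the resolvent $(\tbL+\sigma^2\bI)^{-1}$ restricted to $\orth\bV$, with no contribution along $\mspan\bV$. This is precisely where conditional positive-definiteness of $\bL$ enters, as it is what makes $\tbL+\sigma^2\bI$ invertible on $\orth\bV$ uniformly in $\sigma^2>0$; without it the Schur-complement (or master-equation) step would break down for small $\sigma^2$.
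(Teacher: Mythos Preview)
Your proof is correct and reaches the same destination as the paper's, but the route to $\bB_0$ differs. Both arguments immediately reduce to $\bM_\varepsilon = \bI - \sigma^2\bB_0 + \O(\varepsilon)$ via the Laurent expansion of Theorem~\ref{thm:laurent-expansion}, and both work in the splitting $\mspan\bV\oplus\orth\bV$. The paper then solves the master equation explicitly: it observes $\bV^\top\bB_0=\matr{0}$, writes $\bB_0=\tbU\bZ_0$ in the eigenbasis of $\tbL$, and left-multiplies the second master equation by $\tbU^\top$ to isolate $\bZ_0=(\tbLam+\sigma^2\bI)^{-1}\tbU^\top$. You instead compute $\varepsilon\bN_\varepsilon^{-1}$ directly via a $2\times2$ block inverse and Schur complement, reading off $\bB_0$ as $\blkdiag(\matr{0},(\bL_{22}+\sigma^2\bI)^{-1})$ without appealing to the master equation at all (your mention of it is only a cross-check). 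Your route is arguably more transparent about where conditional positive-definiteness enters --- it is precisely what makes the Schur complement $\varepsilon(\bL_{22}+\sigma^2\bI)+\O(\varepsilon^2)$ invertible --- while the paper's master-equation approach dovetails more naturally with the subsequent corollaries on conditional mean and variance, which reuse that machinery at higher orders.
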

\begin{proof}
  We use theorem \ref{thm:laurent-expansion} with $\bC = \bL + \sigma^2 \bI$ in
  eq. \eqref{eq:smoother-semi-parametric}, which gives
  \begin{equation}
    \label{eq:smoother-semi-parametric-exp1}
    \bM_\varepsilon = (\varepsilon\bL + \bV\bV^\top)\left( \varepsilon (\bL + \sigma^2 \bI) +\bV\bV^\top \right)^{-1} = \varepsilon^{-1} (\varepsilon\bL + \bV\bV^\top)(\bB_0 + \varepsilon \bB_1 + \ldots)
  \end{equation}
  where $\bB_0$ and $\bB_1$ verify the master equation:
  \begin{align*}
    \bV\bV^\top\bB_0 &= \matr{0} \\
    \bV\bV^\top\bB_1 + (\bL + \sigma^2 \bI)\bB_0 &= \matr{I} 
  \end{align*}
  Expanding in eq. \eqref{eq:smoother-semi-parametric-exp1}, we find:
  \begin{equation}
    \label{eq:smoother-semi-parametric-exp1}
    \bM_\varepsilon = \varepsilon^{-1}(\bV\bV^\top \bB_0) + (\bL \bB_0 + \bV\bV^\top \bB_1) + \O(\varepsilon)
  \end{equation}
  The diverging term $\bV\bV^\top \bB_0$ is null by the master
  equation. Again by the master equation, the constant-order term equals $\bI -
  \sigma^2 \bB_0$.
  We now solve for $\bB_0$. Note that $\bV\bV^\top\bB_0 = \matr{0}$ implies $\bV^\top
  \bB_0=\matr{0}$ ($\bV$ is $n$ times $p$ and has full rank).
  Therefore, $\bB_0$ is orthogonal to $\mspan \bV$ and we may express $\bB_0$ in
  a basis that spans the complement of $\mspan \bV$. Recall that the notation we
  introduced, $\bQ$ is an orthogonal basis for $\bV$, $\tbL  = (\bI - \bQ\bQ^\top)\bL(\bI -
  \bQ\bQ^\top)\in \R^{n \times n}$, and so $\tbL$ lies in the complement of
  $\mspan \bV$. The (non-null) eigenvectors of $\tbU$ of $\tbL$ may therefore be taken as
  a basis for the complement of $\mspan \bV$, and we have that $\bB_0 = \tbU
  \bZ_0 $ for some matrix $\bZ_0$.
  Inserting this form into the master equation (in the second term), we have:
  \begin{equation*}
    \bV\bV^\top\bB_1 + (\bL + \sigma^2 \bI) \tbU \bZ_0 = \matr{I} 
  \end{equation*}
  Multiplying to the left by $\tbU^\top$, we have:
  \begin{align*}
    & \tbU^\top (\bL + \sigma^2 \bI) \tbU \bZ_0  = \tbU^\top \\
     \iff & \bZ_0 = \left( \tbU^\top (\bL + \sigma^2 \bI) \tbU  \right)^{-1} \tbU^\top \\
     \iff & \bB_0 = \tbU \left( \tbU^\top (\bL + \sigma^2 \bI) \tbU  \right)^{-1} \tbU^\top = \left( \tbU  \tbU^\top (\bL + \sigma^2 \bI) \tbU \tbU^\top  \right)^{\dag} 
  \end{align*}
  by analogy with $\tbL = \left( \tbU  \tbU^\top \bL \tbU \tbU^\top  \right)^{\dag} $,
  we have  $\bB_0 = (\widetilde{\bL + \sigma^2 \bI})^\dag = (\widetilde{\bL} + \sigma^2 \tbU \tbU^\top)^\dag  =\tbU (\tbLam +\sigma^2\bI)^{-1} \tbU^\top$. Inserting
  this result in eq. \eqref{eq:smoother-semi-parametric-exp1}, we obtain
$    \bM_\varepsilon = \bI - \sigma^2\tbU (\tbLam +\sigma^2\bI)^{-1} \tbU^\top  + \O(\varepsilon) $.
Then observe that $\bM_\varepsilon$ diagonalises in $(\bQ ; \tbU )$ to finally obtain:
   \begin{equation*}
    \bM_\varepsilon = \bQ \bQ^\top + \sigma^2\tbU \tbLam(\tbLam +\sigma^2\bI)^{-1} \tbU^\top  + \O(\varepsilon)
  \end{equation*}
  hence completing the proof.
  %The expression in eq. \eqref{eq:smoother-semi-parametric-lim} follows from a change of basis, multiplying to the right with either $\tbU$ or $\bQ$. 
\end{proof}

The expression for the smoother matrix has a simple interpretation: anything in
the span of $\bV$ goes through unpenalised (for instance, constant and linear
trends), and the rest is penalised in the usual way. This fits in with the
``semi-parametric regression'' interpretation.

%%%%%%%%%%%%%%%%%%%%%% THE VARIANCE
%%%%%%%%%%%%%%%%%%%%%%%%%%%%%%%%%%%%%
Next, we examine the conditional expectation at an unobserved location:
\begin{corollary}
  \label{cor:variance-sp}
  % Let $\kappa_\varepsilon(\vect{x}_i,\vect{x}_j) = l(\vect{x}_i,\vect{x}_j) +
  % \varepsilon^{-1} \sum_i^p  v_i(\vect{x}_i)v_i(\vect{x}_j)$.
  The conditional expectation $ \E(f(x) | \vect{y})$ has the following expansion
  in the semi-parametric limit:
  \begin{equation}
    \label{eq:conditional-exp-semi-parametric-app}
    \E(f(x) | \vect{y} ) =
    \begin{pmatrix} \vect{l}_{x,\X} & \vect{v}_x \end{pmatrix}
    \saddle{\bL+\sigma^2\bI}{\bV}^{-1}
    \begin{pmatrix} \vect{y} \\ \vect{0} \end{pmatrix} + \O(\varepsilon)
  \end{equation}
\end{corollary}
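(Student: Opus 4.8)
The plan is to reduce the conditional expectation in the $\SPM{l}{\V}$ limit to the ordinary GP posterior-mean formula applied to the regularising kernel family $\kappa_\varepsilon(\bx,\by) = l(\bx,\by) + \varepsilon^{-1}\sum_i v_i(\bx)v_i(\by)$ of Section~\ref{sec:semi-param-models-as-limits}, and then pass to the limit. Writing $\bK_\varepsilon = \bL + \varepsilon^{-1}\bV\bV^\top$ and $\bk_{x,\X} = \vect{l}_{x,\X} + \varepsilon^{-1}\vect{v}_x\bV^\top$, eq.~\eqref{eq:posterior-mean} gives, for every $\varepsilon>0$,
\[
  \E(f(x)\mid\by) = \vect{k}_{x,\X}(\bK_\varepsilon + \sigma^2\bI)^{-1}\by = \vect{l}_{x,\X}\vect{\alpha}_\varepsilon + \vect{v}_x\vect{\beta}_\varepsilon ,
\]
where I set $\vect{\alpha}_\varepsilon := (\bK_\varepsilon + \sigma^2\bI)^{-1}\by$ (well-defined since $\bK_\varepsilon + \sigma^2\bI \succ 0$ for $\varepsilon>0$) and $\vect{\beta}_\varepsilon := \varepsilon^{-1}\bV^\top\vect{\alpha}_\varepsilon$. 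The defining relation $(\bL + \sigma^2\bI + \varepsilon^{-1}\bV\bV^\top)\vect{\alpha}_\varepsilon = \by$ together with the definition of $\vect{\beta}_\varepsilon$ is exactly equivalent to the block system
\[
  \begin{pmatrix}\bL + \sigma^2\bI & \bV \\ \bV^\top & -\varepsilon\bI\end{pmatrix}\begin{pmatrix}\vect{\alpha}_\varepsilon \\ \vect{\beta}_\varepsilon\end{pmatrix} = \begin{pmatrix}\by \\ \matr{0}\end{pmatrix}.
\]

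Next I would let $\flatlim$. The coefficient matrix above equals $\saddle{\bL+\sigma^2\bI}{\bV}$ plus the $\O(\varepsilon)$ perturbation $\diag(\matr{0},-\varepsilon\bI)$. I would then check that $\saddle{\bL+\sigma^2\bI}{\bV}$ is invertible: $\bV$ has full column rank because $\X$ is unisolvent for $\V$, and for $z \in \orth\bV = \ker\bV^\top$ one has $z^\top(\bL + \sigma^2\bI)z = z^\top\tbL z + \sigma^2\norm{z}^2 \geq \sigma^2\norm{z}^2 > 0$ by conditional positive-definiteness of $l$ with respect to $\V$ (def.~\ref{def:unisolvent-cpd}); these are precisely the standard invertibility conditions for a saddle-point matrix. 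Since matrix inversion is smooth at an invertible matrix and the perturbation is linear in $\varepsilon$, we get $(\vect{\alpha}_\varepsilon, \vect{\beta}_\varepsilon) = (\vect{\alpha}_0, \vect{\beta}_0) + \O(\varepsilon)$ with $(\vect{\alpha}_0, \vect{\beta}_0) = \saddle{\bL+\sigma^2\bI}{\bV}^{-1}\begin{pmatrix}\by \\ \matr{0}\end{pmatrix}$. Substituting back,
\[
  \E(f(x)\mid\by) = \vect{l}_{x,\X}\vect{\alpha}_\varepsilon + \vect{v}_x\vect{\beta}_\varepsilon = \begin{pmatrix}\vect{l}_{x,\X} & \vect{v}_x\end{pmatrix}\saddle{\bL+\sigma^2\bI}{\bV}^{-1}\begin{pmatrix}\by \\ \matr{0}\end{pmatrix} + \O(\varepsilon),
\]
which is the claim.

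There is no genuine obstacle here; the only points requiring care are (i) that $\bK_\varepsilon + \sigma^2\bI$ is invertible for each $\varepsilon>0$ — immediate — so the manipulations above are legitimate, and (ii) the invertibility of the limiting saddle-point matrix, which is exactly where unisolvence of $\X$ and conditional positive-definiteness of $l$ enter: without them the $\SPM{l}{\V}$ posterior would not be proper and no finite limit would exist. As a consistency check one can instead apply Theorem~\ref{thm:laurent-expansion} to $(\varepsilon(\bL+\sigma^2\bI) + \bV\bV^\top)^{-1} = \varepsilon^{-1}(\bB_0 + \varepsilon\bB_1 + \O(\varepsilon^2))$, note that the $\varepsilon^{-1}$ term of $\E(f(x)\mid\by)$ carries the factor $\bV^\top\bB_0 = \matr{0}$ (from the master equation, as $\bV$ has full column rank) and hence vanishes, and then match the surviving constant term $\vect{l}_{x,\X}\bB_0\by + \vect{v}_x\bV^\top\bB_1\by$ against the top block row of $\saddle{\bL+\sigma^2\bI}{\bV}^{-1}$ using $\bB_0 = \tbU(\tbLam + \sigma^2\bI)^{-1}\tbU^\top$ as already computed in the proof of corollary~\ref{cor:limit-smoother-matrix-semipar}; both routes produce the same expression.
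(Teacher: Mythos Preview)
Your proof is correct and takes a genuinely different route from the paper. The paper works through the Laurent expansion of Theorem~\ref{thm:laurent-expansion}: it writes $\E(f(x)\mid\by)$ as $(\vect{l}_{x,\X}+\varepsilon^{-1}\vect{v}_x\bV^\top)$ times $\varepsilon^{-1}(\bB_0+\varepsilon\bB_1+\ldots)$, kills the $\varepsilon^{-1}$ term via $\bV^\top\bB_0=\matr{0}$, and then recognises that the master equation for $\bB_0,\bV^\top\bB_1$ is exactly the saddle-point system~\eqref{eq:master-eq-1}. Your argument skips the Laurent machinery entirely by introducing the augmented unknown $\vect{\beta}_\varepsilon=\varepsilon^{-1}\bV^\top\vect{\alpha}_\varepsilon$ and observing that the pair $(\vect{\alpha}_\varepsilon,\vect{\beta}_\varepsilon)$ already satisfies a regular $\O(\varepsilon)$ perturbation of the limiting saddle-point system; a plain continuity-of-inversion argument then gives the $\O(\varepsilon)$ expansion directly. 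This is more elementary and arguably cleaner for the mean. The paper's Laurent route, on the other hand, is more systematic and reusable: the same $\bB_0,\bB_1,\bB_2$ are what drive the subsequent variance computation (which needs one further order), so the investment in the master equation pays off there. Your closing ``consistency check'' paragraph is essentially a compressed version of the paper's actual proof.
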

\begin{proof}
  We start with:
  \[ \E(f(x) | \vect{y} ) = (\vect{l}_{x,X} + \varepsilon^{-1} \vect{v}_x \bV^\top
    ) (\bL + \sigma^2 \bI + \varepsilon^{-1} \bV\bV^\top)^{-1} \vect{y} \]
  and insert the Laurent expansion as previously, to obtain:
  \begin{equation}
    \label{eq:cond-exp-semi-parametric-1}
    \E(f(x) | \vect{y} ) = \varepsilon^{-1} ( \vect{v}_x \bV^\top \bB_0 \vect{y})
    + \left( \vect{l}_{x,\X}\bB_0  +  \vect{v}_x \bV^\top \bB_1 \right) \vect{y} +
    \O(\varepsilon)
  \end{equation}
  As previously, the diverging term disappears since $\bV^\top\bB_0=0$. 
  The master equation for $\bB_0$ can be rewritten as:
  \begin{equation}
    \label{eq:master-eq-1}
    \saddle{\bV + \sigma^2 \bI}{\bV}
    \begin{pmatrix}
      \bB_0 \\
      \bV^\top \bB_1
    \end{pmatrix}
    =
    \begin{pmatrix}
      \bI \\
      \matr{0}
    \end{pmatrix}
  \end{equation}
  We may rewrite eq. \eqref{eq:cond-exp-semi-parametric-1} as:
  \[ \E(f(x) | \vect{y} ) =
    \begin{pmatrix}
      \vect{l}_{x,\X} & \vect{v}_x
    \end{pmatrix}
    \begin{pmatrix}
      \bB_0 \\
      \bV^\top \bB_1 
    \end{pmatrix}
\vect{y} +
    \O(\varepsilon)\]
  and eq. \ref{eq:conditional-exp-semi-parametric} is obtained by solving eq.
  \ref{eq:master-eq-1} and injecting the result.
\end{proof}
\begin{remark}
  By setting $\sigma^2=0$ in the equation, we recover the interpolation case.
  One may also verify that setting $x \in \X$ recovers a column of the smoother
  matrix.
  In addition, eq. \ref{eq:conditional-exp-semi-parametric} implies that the
  function $\hat{f} = E(f|\by)$ belongs in $\flatlim$ to a specific function
  space:
  \begin{equation}
    \label{eq:cond-exp-semi-par-RKHS}
    \hat{f}(x) = \sum_{i=1}^n \beta_i l(x,x_i) + \sum_{j=1}^p \alpha_j v_j(x) + \O(\varepsilon)
  \end{equation}
  This looks at first sight like a function space of dimension $n+p$ but by eq.
  (\ref{eq:conditional-exp-semi-parametric}) $\bV^\top\vect{\beta} = 0$, which
  removes $p$ degrees of freedom. The first term corresponds to the
  non-parametric part, the second to the parametric part. The spline basis of
  eq. \eqref{eq:spline-space} is a special case of this general form. 
\end{remark}

Finally, we may also obtain the asymptotic predictive variance using the same
technique (although it requires going a step further in the master equation):
\begin{corollary}
  The conditional expectation $ \Var(f(x) | \vect{y})$ has the following
  expansion in the semi-parametric limit:
  \begin{equation}
    \label{eq:conditional-var-semi-parametric-app}
    \Var(f(x) | \vect{y} ) = \vect{l}_{x,\X} - 
    \begin{pmatrix} \vect{l}_{x,\X} & \vect{v}_x \end{pmatrix}
    \saddle{\bL+\sigma^2\bI}{\bV}^{-1}
    \begin{pmatrix} \vect{l}_{x,\X} \\ \vect{v}_x^\top \end{pmatrix} + \O(\varepsilon)
  \end{equation}
\end{corollary}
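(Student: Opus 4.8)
The plan is to follow the proof of Corollary~\ref{cor:variance-sp} almost verbatim, running the Laurent expansion of Theorem~\ref{thm:laurent-expansion} on an \emph{augmented} system that treats the prediction location $x$ on the same footing as the $n$ measurement locations --- the same device used for the predictive variance in the proof of Lemma~\ref{lem:equivalence-smoother}.

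First I would recall that for the kernel $k^{(\varepsilon)}(\bx,\by)=l(\bx,\by)+\varepsilon^{-1}\sum_{i=1}^m v_i(\bx)v_i(\by)$ the posterior variance at $x$ is
\[
\Var(f(x)\mid\by)=l(x,x)+\varepsilon^{-1}\vect{v}_x\vect{v}_x^\top-\bigl(\vect{l}_{x,\X}+\varepsilon^{-1}\vect{v}_x\bV^\top\bigr)\bigl(\bL+\sigma^2\bI+\varepsilon^{-1}\bV\bV^\top\bigr)^{-1}\bigl(\vect{l}_{\X,x}+\varepsilon^{-1}\bV\vect{v}_x^\top\bigr),
\]
and that this is the Schur complement of the leading $n\times n$ block of the $(n+1)\times(n+1)$ matrix $\bar{\bL}+\varepsilon^{-1}\bar{\bV}\bar{\bV}^\top$, where $\bar{\bL}=\bigl(\begin{smallmatrix}\bL+\sigma^2\bI & \vect{l}_{\X,x}\\ \vect{l}_{x,\X} & l(x,x)\end{smallmatrix}\bigr)$ and $\bar{\bV}=\bigl(\begin{smallmatrix}\bV\\ \vect{v}_x\end{smallmatrix}\bigr)\in\R^{(n+1)\times m}$. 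By the elementary identity ``last diagonal entry of the inverse $=$ $1/(\text{Schur complement of the leading block})$'', this turns the problem into the study of $\vect{\delta}_{n+1}^\top(\bar{\bL}+\varepsilon^{-1}\bar{\bV}\bar{\bV}^\top)^{-1}\vect{\delta}_{n+1}$, with $\vect{\delta}_{n+1}$ the last canonical basis vector.

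Next I would apply Theorem~\ref{thm:laurent-expansion} to $\bar{\bV}\bar{\bV}^\top+\varepsilon\bar{\bL}$: its hypotheses hold because $\bar{\bV}$ retains full column rank $m$ (appending a row cannot lower the rank) while $\bar{\bV}\bar{\bV}^\top\in\R^{(n+1)\times(n+1)}$ has rank $m\le n<n+1$ and is thus singular. Hence $(\bar{\bL}+\varepsilon^{-1}\bar{\bV}\bar{\bV}^\top)^{-1}=\bar{\bB}_0+\O(\varepsilon)$ where $\bar{\bB}_0$ solves the first two lines of the master equation; exactly as in the proof of Corollary~\ref{cor:variance-sp} these are equivalent to $\saddle{\bar{\bL}}{\bar{\bV}}\bigl(\begin{smallmatrix}\bar{\bB}_0\\ \bar{\bV}^\top\bar{\bB}_1\end{smallmatrix}\bigr)=\bigl(\begin{smallmatrix}\bI\\ \matr{0}\end{smallmatrix}\bigr)$, so $\bar{\bB}_0$ is the top-left $(n+1)\times(n+1)$ block of $\saddle{\bar{\bL}}{\bar{\bV}}^{-1}$, and in particular $1/\Var(f(x)\mid\by)=(\bar{\bB}_0)_{n+1,n+1}+\O(\varepsilon)$. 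To read off that entry I would permute the $(n+1)$-st row and column of
\[
\saddle{\bar{\bL}}{\bar{\bV}}=\begin{pmatrix}\bL+\sigma^2\bI & \vect{l}_{\X,x} & \bV\\ \vect{l}_{x,\X} & l(x,x) & \vect{v}_x\\ \bV^\top & \vect{v}_x^\top & \matr{0}\end{pmatrix}
\]
to last position, exhibiting it as $\bigl(\begin{smallmatrix}\saddle{\bL+\sigma^2\bI}{\bV} & \vect{b}\\ \vect{b}^\top & l(x,x)\end{smallmatrix}\bigr)$ with $\vect{b}=\bigl(\begin{smallmatrix}\vect{l}_{\X,x}\\ \vect{v}_x^\top\end{smallmatrix}\bigr)$; its last diagonal entry of the inverse is the reciprocal of $l(x,x)-\bigl(\begin{smallmatrix}\vect{l}_{x,\X} & \vect{v}_x\end{smallmatrix}\bigr)\saddle{\bL+\sigma^2\bI}{\bV}^{-1}\bigl(\begin{smallmatrix}\vect{l}_{\X,x}\\ \vect{v}_x^\top\end{smallmatrix}\bigr)$, i.e. the reciprocal of the right-hand side of \eqref{eq:conditional-var-semi-parametric-app} (up to the obvious correction of the leading term there to $l(x,x)$). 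Inverting this scalar --- whose value stays bounded away from $0$ as $\flatlim$, being the limiting positive predictive variance --- gives $\Var(f(x)\mid\by)=\bigl[\text{RHS of }\eqref{eq:conditional-var-semi-parametric-app}\bigr]+\O(\varepsilon)$.

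The argument is essentially bookkeeping; the two spots that need care are (i) checking that unisolvence of $\X$ for $\V$ both keeps $\bar{\bV}$ of full column rank and makes $\saddle{\bL+\sigma^2\bI}{\bV}$ and $\saddle{\bar{\bL}}{\bar{\bV}}$ invertible --- so the two Schur-complement reductions are legitimate --- which follows from conditional positive-definiteness of $l$, from $\sigma^2\ge0$, and from $\X\cup\{x\}$ being unisolvent as well; and (ii) keeping the $\sigma^2\bI$ perturbation confined to the measurement block of $\bar{\bL}$ rather than its $(n+1,n+1)$ entry, so that the Schur complement computed is $\Var(f(x)\mid\by)$ and not the predictive variance of a fresh \emph{noisy} observation at $x$.
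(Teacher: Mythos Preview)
Your proof is correct and takes a genuinely different route from the paper's. The paper expands the variance formula directly: it inserts the Laurent series for $(\bL+\sigma^2\bI+\varepsilon^{-1}\bV\bV^\top)^{-1}$ into the quadratic form and tracks the $\varepsilon^{-2}$, $\varepsilon^{-1}$, and $\varepsilon^{0}$ terms separately. The $\varepsilon^{-2}$ term vanishes immediately from $\bV^\top\bB_0=0$; killing the $\varepsilon^{-1}$ term requires introducing an auxiliary row vector $\vect{\alpha}$ with $\vect{\alpha}\bV=\vect{v}_x$ together with the identity $\bV\bV^\top\bB_1\bV\bV^\top=\bV\bV^\top$; and the $\varepsilon^{0}$ term brings in $\bB_2$ and the \emph{third} line of the master equation before a block-inverse identity collapses everything to \eqref{eq:conditional-var-semi-parametric-app}.

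Your augmented-system device sidesteps that three-order bookkeeping entirely: by folding the prediction point into an $(n{+}1)\times(n{+}1)$ system you need only the \emph{first} Laurent coefficient $\bar{\bB}_0$, and two nested Schur complements finish the job. The price is the reciprocal at the end, so you must know the limiting predictive variance is nonzero --- a mild condition, but one the paper's direct expansion does not need, since it establishes convergence of $\Var$ itself rather than of $1/\Var$. Both routes ultimately rest on invertibility of the same saddle-point block $\saddle{\bL+\sigma^2\bI}{\bV}$. You are also right that the leading $\vect{l}_{x,\X}$ in the displayed formula is a typo for the scalar $l(x,x)$; the paper's own proof writes it correctly.
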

\begin{proof}
    We follow the same steps as above, starting with:
  \begin{align*}
\Var(f(x) | \vect{y} ) &= \vect{l}_{x,x} + \varepsilon^{-1} \vect{v}_x \vect{v}_x^\top +
    (\vect{l}_{x,\X} + \varepsilon^{-1}  \vect{v}_x\bV^\top)
    (\bL+\sigma^2\bI + \varepsilon^{-1} \bV\bV^\top)^{-1}
                         (\vect{l}_{\X,x} + \varepsilon^{-1}  \bV\vect{v}_x^\top) \\
                       &= \vect{l}_{x,x} + \varepsilon^{-1} \vect{v}_x \vect{v}_x^\top + \varepsilon^{-2}
                         (\varepsilon \vect{l}_{x,\X} +  \vect{v}_x\bV^\top)
                         (\bB_0 + \varepsilon \bB_1 + \varepsilon \bB_2 + \ldots)
                         (\varepsilon \vect{l}_{\X,x} +   \bV\vect{v}_x^\top)
  \end{align*}
  We now extract the terms in the expansion, starting with the lowest valuation:
  \[ [\varepsilon^{-2} ] \Var(f(x) | \vect{y} ) = - \vect{v}_x\bV^\top \bB_0 \bV
    \vect{v}_x^\top \]
  This term is zero by the master equation.
  The next order is:
  \begin{equation}
    \label{eq:intrisic-var-epsilon-min1}
    [\varepsilon^{-1} ] \Var(f(x) | \vect{y} ) = \vect{v}_x\vect{v}_x^\top  - \vect{v}_x\bV^\top \bB_1 \bV \vect{v}_x^\top  - 2 \vect{l}_{x,\X} \bB_0 \bV \vect{v}_x^\top
  \end{equation}
  The master equation implies $\bB_0 \bV = 0$, so the last term drops out.
  We also have $\bB_0(\bL + \sigma^2\bI) + \bB_1 \bV\bV^\top = \bI $, and multiplying
  to the left by $\bV\bV^\top$ yields $\bV\bV^\top \bB_1 \bV\bV^\top = \bV\bV^\top$. Let
  $\vect{\alpha} $ any vector such that $\vect{\alpha} \bV  = \vect{v}_x$.
  Then
  \[ \vect{\alpha} \bV\bV^\top \bB_1 \bV\bV^\top \vect{\alpha}^\top = \vect{v}_x \bV^\top
    \bB_1 \bV \vect{v}_x = \vect{v}_x\vect{v}_x^\top \]
  This shows that $[\varepsilon^{-1} ] \Var(f(x) | \vect{y} ) = 0$, and
  therefore that the conditional variance is not divergent as $\flatlim$. 
  We now compute the constant-order term:
  \begin{equation}
    \label{eq:intrisic-var-epsilon-0}
    [\varepsilon^{0} ] \Var(f(x) | \vect{y} ) = l_{x,x}  - \vect{v}_x\bV^\top \bB_2 \bV \vect{v}_x^\top  - 2 \vect{l}_{x,\X} \bB_1 \bV \vect{v}_x^\top - \vect{l}_{x,\X} \bB_0  \vect{l}_{\X,x}
  \end{equation}
  The next order in the master equation is:
  \[ \bV\bV^\top \bB_2 + (\bL + \sigma^2\bI)\bB_1 = 0\]
  Multiplying to the right by $\bV \bV^\top$, we have:
  \[ \bV\bV^\top \bB_2 \bV\bV^\top =  - (\bL + \sigma^2\bI)\bB_1 \bV\bV^\top\]
  Again using $\vect{\alpha}$ such that $\vect{\alpha} \bV  = \vect{v}_x$, we
  obtain 
  \[ \vect{v}_x \bV^\top \bB_2 \bV \vect{v}_x^\top =  -\vect{\alpha} (\bL + \sigma^2\bI)\bB_1 \bV\vect{v}_x^\top\]
  We now inject this result in eq. \eqref{eq:intrisic-var-epsilon-0}, and
  re-express it in the following form:
  \begin{align*}
    [\varepsilon^{0} ] \Var(f(x) | \vect{y} )  &= l_{x,x}  + \vect{\alpha}
    (\bL+\sigma^2\bI)\bB_1 \bV\vect{v}_x^\top  - 2 \vect{l}_{x,\X} \bB_1 \bV \vect{v}_x^\top -
                                                \vect{l}_{x,\X} \bB_0  \vect{l}_{\X,x} 
  \end{align*}
 We already note that the master equation for $\bB_0$ is equivalent to \ref{eq:master-eq-1}. Solving for $\bB_0$ and $ \bV^\top \bB_1$
 is easy and gives in particular $\bV^\top \bB_1= (\bV^\top \left(\bV^\top (\bL+\sigma^2 \bI)^{-1} \bV\right)^{-1} \bV^\top  (\bL+\sigma^2 \bI)^{-1}$. Thus the term 
 containing $\vect{\alpha}$ equals $ \vect{\alpha}
    (\bL+\sigma^2\bI)\bB_1 \bV\vect{v}_x^\top = \vect{v}_x \left(\bV^\top (\bL+\sigma^2 \bI)^{-1} \bV\right)^{-1} \vect{v}_x^\top$ .
   
Collecting the different terms, we can write  
    \begin{align}
    \label{AlmostFinalEq}
    [\varepsilon^{0} ] \Var(f(x) | \vect{y} ) &= l_{x,x}  +  
    \begin{pmatrix} \vect{l}_{x,\X} & \vect{v}_x \end{pmatrix}
     \begin{pmatrix} 
     \bB_0 & \bB_1 \bV  \\
     \bV^\top\bB_1 & -\left(\bV^\top (\bL+\sigma^2 \bI)^{-1} \bV\right)^{-1} 
     \end{pmatrix}
    \begin{pmatrix} \vect{l}_{x,\X} \\ \vect{v}_x^\top \end{pmatrix} 
    \end{align}
Now, using block inverse formula allows to show that if $\bA \in \R^{n\times n}$ is an invertible matrix  $\bB\in \R^{n\times p}, p\leq n$ is full rank, then
  \begin{eqnarray*}
    \saddle{\bA}{\bB}^{-1} = 
    \begin{pmatrix} 
      \bA^{-1} - \bA^{-1} \bB (\bB^\top \bA^{-1} \bB)^{-1} \bB^\top \bA^{-1} & \bA^{-1} \bB (\bB^\top \bA^{-1} \bB)^{-1}   \\
      (\bB^\top \bA^{-1} \bB)^{-1}\bB^\top \bA^{-1} & - (\bB^\top \bA^{-1} \bB)^{-1}
    \end{pmatrix}
  \end{eqnarray*}
 Using this result, we observe  that the matrix involved in Eq. (\ref{AlmostFinalEq})  is the inverse of $$\saddle{\bL+\sigma^2\bI}{\bV}, $$
   hence finishing the proof. 
\end{proof}
\begin{remark}
It is interesting to develop the inverse of the saddle point matrix,. The {\it a posteriori} mean  then reads
\begin{equation}
    \E(f(x) | \vect{y} ) =  \vect{v}_{x}  \beta +  \vect{l}_{x,\X}(\bL+\sigma^2 \bI)^{-1} \left( \vect{y}  - \bV \beta \right)
  \end{equation}
  where $\beta= \left(\bV^\top (\bL+\sigma^2 \bI)^{-1} \bV\right)^{-1} \bV^\top(\bL+\sigma^2 \bI)^{-1}\vect{y} $. The estimation is performed after the polynomial trend has been removed. Note that the preceding results corresponds to the Bayesian approach in which $g$ admits a Gaussian prior and $\beta$s are chosen Gaussian with an infinite variance,  an uninformative prior which however leads to a proper posterior (see {\it e.g.} \cite{Gu1992}). 
\end{remark}

%%%%%%%%%%%%%%%% PROOF THEOREM 1D
%%%%%%%%%%%%%%%%%%
%%%%%%%%%%%%%%%%%%%%

\subsection{Proof of theorem \ref{thm:equivalent-kernels-1d}}
\label{sec:proof-of-univar-result}

As stated in the proof sketch, most of the proof consists in working out what the
limiting smoother matrices are. These results can be found implicitly in   
\cite{barthelme2023determinantal}. The approach we use here is much more direct,
however, and hopefully easier to follow. 

We  start by recalling results from \cite{BarthelmeUsevich:KernelsFlatLimit}
on eigenvalues and eigenvectors in the flat limit. 
There are two essential facts to keep in mind. One is that most eigenvalues of
kernel matrices go to 0 in the flat limit, but they do so at different speeds.
The other is that the eigenvectors go to orthogonal polynomials or splines,
depending on the regularity of the kernel and the magnitude of the associated
eigenvalues.

\subsubsection{Asymptotics of the eigenvalues}
\label{sec:eigenvals-1d}

In the cases we examine, the smoother matrix reads:
\[ M_\varepsilon = \bK_\varepsilon(\bK_\varepsilon + \frac{\sigma^2}{\gamma} \bI
  )^{-1} = \bU_\varepsilon
  \diag(\frac{\lambda_i(\varepsilon)}{\lambda_i(\varepsilon) +
    \frac{\sigma^2}{\gamma}}) \bU_\varepsilon^\top\]

We will review the behaviour of the eigenvectors $\bU_\varepsilon$ later. An
interpretation that is helpful to keep in mind is that the smoother matrix acts
like a filter: the measurement $\vect{y}$ is transformed to the eigenbasis,
then (by analogy with the Fourier transform) each discrete  ``frequency'' is
scaled by $\frac{\lambda_i(\varepsilon)}{\lambda_i(\varepsilon) +
  \frac{\sigma^2}{\gamma}}$, after which the data is transformed back to its
original space. The function $s(\lambda) = \frac{\lambda}{\lambda+\frac{\sigma^2}{\gamma}}$
is analogous to a filter response function. It is an increasing function of
$\lambda$, and maps $\R^+$ to $[0,1]$. Notably, if $\lambda$ is small
compared to $\frac{\sigma^2}{\gamma}$, then $s(\lambda) \approx 0$, and if it is
large $s(\lambda) \approx 1$. In a nutshell, what happens in the flat limit is
that eigenvalues grow apart (by orders of magnitude), so that the eigenvalues
separate into three groups. A first group is
much larger than $\frac{\sigma^2}{\gamma}$, and these have $s(\lambda) \approx
1$; a second  has approximately the same magnitude as $\frac{\sigma^2}{\gamma}$,
and finally a third group is much smaller, and have $s(\lambda) \approx 0$. That
explains why GPs behave in the flat limit like semiparametric regression (like
intrinsic GPs): some directions in $\vect{y}$ go through the smoother matrix
unchanged, some are penalised, and some are clamped down to 0. 

The asymptotics of the eigenvalues of kernel matrices in the flat limit are as
follows. If the kernel matrix is analytic in $\varepsilon$, then the eigenvalues can also
be written as analytic functions\footnote{The requirement that
  $\bK_\varepsilon$ be analytic is probably artificial, see \cite{akian2016non}} of $\varepsilon$, {\it i.e.} $\lambda_i(\varepsilon) =
\varepsilon^{\nu_i} (\lt_i + \O(\varepsilon))$. The
coefficient $\nu_i$ is the valuation of the $i$-th eigenvalue, and gives the rate
at which it vanishes as $\flatlim$. In a log-log plot of eigenvalues versus
$\varepsilon$, it defines the limiting slope. % (see fig.). 
When $d=1$, the valuation of the eigenvalues is given by the following result
(from \cite{BarthelmeUsevich:KernelsFlatLimit})
\[ \nu_i =
  \begin{cases}
    2(i - 1) \mathrm{\ if\ } i \leq r \\
    2r - 1 \mathrm{\ otherwise}
  \end{cases}
\]
Note the dependency on $r$, the smoothness order of the kernel. Two extreme
cases are the Gaussian kernel ($r=+\infty$), for which the eigenvalues are
$\O(1),\O(\varepsilon^2),\O(\varepsilon^4),\O(\varepsilon^6),\ldots$ and the
exponential kernel, which has $r=1$ and eigenvalues that are
$\O(1),\O(\varepsilon),\O(\varepsilon),\O(\varepsilon),\ldots$. Kernels with
$r>1$ behave like the Gaussian kernel for the first $r$ eigenvalues, then the
next $n-r$ eigenvalues are all of the same order. 

Now consider the asymptotics of terms of the form $
\frac{\lambda_i}{\lambda_i+\frac{\sigma^2}{\gamma}}$, as they appear in the
spectral form of the smoother matrix. If $\gamma$ is constant as a function of
$\varepsilon$, then, for all kernels with $r\leq 1$,

\[ \frac{\lambda_1}{\lambda_1+\frac{\sigma^2}{\gamma}} =
  \frac{\lt_1}{\lt_1+\frac{\sigma^2}{\gamma}} + \O(\varepsilon) \]
and for all $i > 1$
\[ \frac{\lambda_i}{\lambda_i+\frac{\sigma^2}{\gamma}} =
  \O(\varepsilon) \]
All but the first term go to 0 in $\flatlim$. In the filtering interpretation,
that means the smoothing matrix will only let through the part of $\vect{y}$
that is proportional to the first
eigenvector and everything else will be clamped down to zero. The associated
eigenvector is the constant vector, so that the output of the smoother is a
constant function. In addition, there will be some regularisation, given by
\[ 0 < \frac{\lt_1}{\lt_1+\frac{\sigma^2}{\gamma}} < 1 \]
The smoother matrix for constant $\gamma$ then becomes effectively the smoother
matrix for a (penalised) polynomial regression of degree 0. 

Thus, taking the limit $\flatlim$ while keeping $\gamma$ fixed does not lead to
very interesting results, since the GP fit tends in that case to a
constant function. Precise examination of the asymptotics (more on which below)
leads to the conclusion that $\gamma$ must scale as $\varepsilon^{-p}$ for the
number of degrees of freedom to stay constant as $\varepsilon$.

As an example, we may take $\gamma=\gt \varepsilon^{-1}$. There are two cases we
need to distinguish: $r=1$ and $r>1$. If $r>1$, one may check that
$\frac{\lambda_i(\varepsilon)}{\lambda_i(\varepsilon)+\frac{\sigma^2}{\gamma(\varepsilon)}}$ goes to 1 for $i=1$, and goes to 0
for $i>1$. Thus, the fit will now correspond to an \emph{unpenalised} polynomial
regression of degree 0. If $r=1$, then
$\frac{\lambda_i(\varepsilon)}{\lambda_i(\varepsilon)+\frac{\sigma^2}{\gamma(\varepsilon)}}$
goes to 1 for $i=1$, then all
subsequent terms (from $2$ to $n$) equal
$\frac{\lt_i}{\lt_i+\frac{\sigma^2}{\gt}} + \O(\varepsilon) $. From the
filtering point of view, the first eigenvector goes through unpenalised, then
everything goes through with a penalty. This is the signature of a
semiparametric model, and indeed it is. As we will see later, in this case the
parametric part is the constant function, and the non-parametric part is made up
of linear splines. 

The general pattern of the results has much in common with what we have seen so
far. $\gamma$ needs to rise as $\flatlim$, and by controlling the ratio $\gamma
/ \varepsilon$, we control how many eigenvectors go through unpenalised,
penalised, or not at all. Asymptotically we need $\gamma = \gt \varepsilon^{-p}$,
and the asymptotics will depend on the parity of $p$. 

%There is one subtle caveat we need to point out, and that is that when we write
%$\lambda_i(\varepsilon) = \varepsilon^{\nu_i}(\lt_i+\O(\varepsilon))$, the sequence of
%$\lt_i$'s is also decreasing and for $i$ large enough $\lt_i$ may be very small.
%We return to that point in the section on isofreedom curves. 

In the theorems stated here we take $\gamma=\varepsilon^{-p} \tilde{\gamma}$.
The first result concerns the smoother matrix and follows directly from the
spectral asymptotics in \cite{BarthelmeUsevich:KernelsFlatLimit}. We introduce  $\widetilde{\bD^{(2r-1)}}$ the matrix $\bD^{(2r-1)} = \left[ \norm{\bx_i -
    \bx_j}^{2r-1} \right]_{i,j}$ with monomials of degree $< r$ projected out, {\it i.e.} obtained using a QR decomposition.
\begin{equation}
  \label{eq:Dtilde}
 \widetilde{\bD^{(2r-1)}} = (\bI - \bQ_{< r} \bQ_{< r}^\top)\bD^{(2r-1)}(\bI - \bQ_{< r} \bQ_{< r}^\top)
\end{equation}
$\bQ_{< r}$ being and orthogonal basis for $\bV_{< r}$, {\it e. g.} obtained using a QR decomposition. We can state:
\begin{theorem}
\label{eigenflat1d:th}
  Let $\kappa$ a stationary kernel with regularity order $r \in \mathbb{N}^+$,
  given $n$ observations on the real line  $\{
  x_1,\ldots,x_n \}$ the smoother matrix
  $\bM_\varepsilon = \gamma \bK (\gamma \bK + \sigma^2 \bI)^{-1}$ has the
  following expansion as $\flatlim$:
  $\bM_\varepsilon = \bU \bF_\varepsilon \bU^\top + \O(\varepsilon)$
  where $\bU$ is the matrix of limiting eigenvectors and $\bF_\varepsilon$ is a diagonal
  (``filter'') matrix with entries
  \begin{equation}
    \label{eq:filter-matrix}
    {\bF_{\varepsilon}}_{ i,i} =
    \begin{cases}
   \displaystyle    \frac{\lt_i}{\lt_i + \gt^{-1} \sigma^2
        \varepsilon^{p-2 (i-1)}} & \mbox{\ if\ } i \leq r \\
    \displaystyle   \frac{\lt_i}{\lt_i + \gt^{-1} \sigma^2
        \varepsilon^{p-(2r-1)}} & \mbox{ otherwise}
    \end{cases}
  \end{equation}
  The limiting eigenvectors are similarly partitioned as:
  \[ \bU = \begin{bmatrix}
      \bQ_{\leq \min(n,r)} & \tilde{\bU}
    \end{bmatrix} \]
  where $\bV = \bQ\bR$ is the QR decomposition of $\bV$ and $\widetilde{\bU}$ are
  the eigenvectors of $\widetilde{\bD^{(2r-1)}}$, as defined above. If $r>n$ then
  $\bU = \bQ$.
\end{theorem}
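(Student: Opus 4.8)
The plan is to read off the smoother matrix directly from the spectral decomposition of $\bK$ and then substitute the flat-limit asymptotics of its eigenpairs recalled just above. Write $\bK=\bU_\varepsilon\,\diag(\lambda_1(\varepsilon),\ldots,\lambda_n(\varepsilon))\,\bU_\varepsilon^\top$, so that with $\gamma=\gt\,\varepsilon^{-p}$ one has $\sigma^2/\gamma=\gt^{-1}\sigma^2\varepsilon^{p}$ and
\[
  \bM_\varepsilon=\bU_\varepsilon\,\diag\!\Big(\tfrac{\lambda_i(\varepsilon)}{\lambda_i(\varepsilon)+\gt^{-1}\sigma^2\varepsilon^{p}}\Big)\,\bU_\varepsilon^\top .
\]
Now substitute $\lambda_i(\varepsilon)=\varepsilon^{\nu_i}(\lt_i+\O(\varepsilon))$ with $\nu_i=2(i-1)$ for $i\le r$ and $\nu_i=2r-1$ otherwise, and cancel $\varepsilon^{\nu_i}$ from numerator and denominator. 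The $i$-th diagonal entry becomes $\dfrac{\lt_i+\O(\varepsilon)}{\lt_i+\O(\varepsilon)+\gt^{-1}\sigma^2\varepsilon^{\,p-\nu_i}}$, and since $p-\nu_i$ equals $p-2(i-1)$ in the first case and $p-(2r-1)$ in the second, this is exactly the entry ${(\bF_\varepsilon)}_{ii}$ of the filter matrix, up to the $\O(\varepsilon)$ perturbation of the $\lt_i$-terms.

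The next step is to check that this residual is genuinely $\O(\varepsilon)$, uniformly in $i$. With $b_i=\gt^{-1}\sigma^2\varepsilon^{\,p-\nu_i}\ge 0$ and $e_i=\O(\varepsilon)$ the common perturbation appearing both in the numerator and in the $\lt_i$-part of the denominator, one has the elementary identity
\[
  \frac{\lt_i+e_i}{\lt_i+e_i+b_i}-\frac{\lt_i}{\lt_i+b_i}=\frac{e_i\,b_i}{(\lt_i+e_i+b_i)(\lt_i+b_i)} .
\]
Because $\kappa$ is positive definite, $\lt_i>0$ for all $i$ (for $i>r$ the $\lt_i$ are the eigenvalues of $(-1)^r\widetilde{\bD^{(2r-1)}}$ on $\orth\bV_{<r}$, where it is positive definite by conditional positive-definiteness of $(-1)^r|x-y|^{2r-1}$; in particular the spline block has full size $n-r$). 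Hence for $\varepsilon$ small both denominator factors exceed a fixed positive multiple of $\lt_i+b_i$, and combining $|e_i|=\O(\varepsilon)$ with $b_i/(\lt_i+b_i)^2\le 1/(4\lt_i)$ makes the right-hand side $\O(\varepsilon)$. So $\diag(\cdots)=\bF_\varepsilon+\O(\varepsilon)$, and since $\bU_\varepsilon$ is orthogonal and $\norm{\bF_\varepsilon}\le 1$, $\bM_\varepsilon=\bU_\varepsilon\bF_\varepsilon\bU_\varepsilon^\top+\O(\varepsilon)$.

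It then remains to replace $\bU_\varepsilon$ by the limiting matrix $\bU$ of the statement. The curves $\lambda_1(\varepsilon),\ldots,\lambda_{\min(n,r)}(\varepsilon)$ have pairwise distinct valuations $0,2,4,\ldots$, hence are simple eigenvalues for $\varepsilon$ small; their eigenvectors are therefore analytic in $\varepsilon$ and converge at rate $\O(\varepsilon)$ to the orthonormalised monomials of increasing degree, i.e.\ to the first $\min(n,r)$ columns of $\bU$. When $r<n$, the remaining eigenvalues (the ``spline block'', all of valuation $2r-1$) are separated from the first group by the valuation gap, so the total spectral projector onto the block is analytic and converges at rate $\O(\varepsilon)$ to $\bI-\bQ_{<r}\bQ_{<r}^\top$, and \cite{BarthelmeUsevich:KernelsFlatLimit} identifies the limiting eigenbasis inside the block as the eigenvectors $\widetilde{\bU}$ of $\widetilde{\bD^{(2r-1)}}$. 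Feeding $\bU_\varepsilon=\bU+\O(\varepsilon)$ into $\bU_\varepsilon\bF_\varepsilon\bU_\varepsilon^\top$ and using $\norm{\bF_\varepsilon}\le 1$ gives $\bM_\varepsilon=\bU\bF_\varepsilon\bU^\top+\O(\varepsilon)$; when $r\ge n$ there is no spline block, $\bU=\bQ$ is the QR factor of the full-rank Vandermonde, and only the ``polynomial'' case of the argument is used.

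The step I expect to need the most care is this last one inside the spline block, since the leading coefficients $\lt_i$ $(i>r)$ need not be simple, so the individual eigenvectors there are only defined up to rotations within eigenspaces of $\widetilde{\bD^{(2r-1)}}$. The resolution is that this ambiguity does not affect $\bM_\varepsilon$ to the stated order: if $p<2r-1$ the whole block contributes only $\O(\varepsilon)$ to both $\bM_\varepsilon$ and $\bU\bF_\varepsilon\bU^\top$ (its filter entries are $\O(\varepsilon^{\,2r-1-p})$); if $p>2r-1$ only the block projector enters to leading order; and if $p=2r-1$ the block contributes the basis-independent matrix $g(\widetilde{\bD^{(2r-1)}})$ with $g(t)=t/(t+\gt^{-1}\sigma^2)$. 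Thus any consistent choice of eigenbasis — in particular the one fixed in \cite{BarthelmeUsevich:KernelsFlatLimit} — produces the same $\bU\bF_\varepsilon\bU^\top$ up to $\O(\varepsilon)$, which finishes the plan.
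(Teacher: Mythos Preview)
Your proposal is correct and follows the same route as the paper: diagonalise $\bM_\varepsilon$ via the analytic eigendecomposition of $\bK_\varepsilon$ (Rellich), substitute the eigenvalue asymptotics $\lambda_i(\varepsilon)=\varepsilon^{\nu_i}(\lt_i+\O(\varepsilon))$ with $\nu_i=2(i-1)$ or $2r-1$, and invoke \cite{BarthelmeUsevich:KernelsFlatLimit} for the limiting eigenvectors. The paper's proof is a one-sentence sketch of exactly this; you have simply filled in the details it leaves implicit --- the elementary $\O(\varepsilon)$ bound on the filter entries and the basis-independence argument for the spline block --- so there is no genuine methodological difference.
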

\begin{proof}
  This follows directly from noting that (by Rellich's theorem) $\gamma \bK
  (\gamma \bK + \sigma^2 I) ^ {-1} =
  \bU(\varepsilon) \bF_{\epsilon}\bU(\varepsilon)^\top $ with
  $\bU(\varepsilon)$ analytic, and
  $ {\bF_{\varepsilon}}_{ i,i}=\frac{\lambda_i(\varepsilon)}{\lambda_i(\epsilon)+\frac{\sigma^2}{\gamma(\varepsilon)}}$
  and filling in the results of \cite{BarthelmeUsevich:KernelsFlatLimit}. 
\end{proof}

Note that the theorem includes the case $r \geq n$. In this case, the result is independent of the regularity parameter $r$ and is equivalent to the infinite smooth case. 

We can now study the flat limit when $\varepsilon$ goes to 0.

\subsubsection{Smooth case, or $r>n$}
\label{proof1dsmooth:sssec}

In this case all the eigenvalues are $O(\varepsilon^{2(i-1)})$. 
 Thus we get :
 \begin{itemize}
\item $p\geq 2n -1 $, then $ {\bF_{\varepsilon}}_{ i,i} \longrightarrow_{\varepsilon \rightarrow 0} 1, \forall i=1,\ldots, n$, $\matr{M}_\varepsilon \longrightarrow \bI$ and $\Tr(\matr{M}_\varepsilon) \longrightarrow n$.

This corresponds to the polynomial interpolation of $n$ points by a polynomial of order $n-1$. 

\item $p$ odd and $p < 2n -1 $ : 
$ {\bF_{\varepsilon}}_{ i,i}\longrightarrow_{\varepsilon \rightarrow 0} 1, \forall i=1,\ldots, l=\lfloor p/2+1 \rfloor$, 0 otherwise and thus $\matr{M}_\varepsilon \longrightarrow \bQ_{\leq l} \bQ_{\leq l}^\top $ and $\Tr(\matr{M}_\varepsilon) \longrightarrow l $.

This corresponds to a (unpenalised) regression by a polynomial of order $l-1$.

%The model  converges in this case to $(l_p ; {\cal V}_p)$ where 
%the kernel associated is  $l_p(x,y)=0 $ and the basis functions given by the monomials up to order $\lfloor p/2+1 \rfloor$, or 
%${\cal V}_p=(1,x,x^2,\ldots,x^{\lfloor p/2+1 \rfloor})$.

\item  $p$ even  and $p < 2n-1$ :  : $ {\bF_{\varepsilon}}_{ i,i} \longrightarrow_{\varepsilon \rightarrow 0} 1, \forall i=1,\ldots, l= p/2$, 
$\matr{\Delta}_{l+1,l+1} =  \frac{\lt_{l+1}}{\lt_{l+1}+\frac{\sigma^2}{\gt}}$, 0 otherwise. 
Thus $\matr{M}_\varepsilon \longrightarrow \bQ_{\leq l} \bQ_{\leq l}^\top + \frac{\lt_{p/2+1}}{\lt_{p/2+1}+\frac{\sigma^2}{\gt}} \vect{q}_{p/2+1}\vect{q}_{p/2+1}^\top $ and $\Tr(\matr{M}_\varepsilon) \longrightarrow p/2+ \frac{\lt_{p/2+1}}{\lt_{p/2+1}+\frac{\sigma^2}{\gt}}$.

This corresponds to a penalised polynomial regression  (the order is controlled by a balance between the observation noise and the importance (as measured by the eigenvalue) of a higher order monomial). 

%The model thus converges to $(l_p ; {\cal V}_p)$ where 
%the kernel associated is  $l_p(x,y)=x^{p/2+1} y^{p/2+1}$ and the basis functions given by the monomials up to order $p/2$, or 
%${\cal V}_p=(1,x,x^2,\ldots,x^{p/2})$.

\end{itemize}

\subsubsection{Non-smooth case or $r<n$}
\label{proof1dnonsmooth:sssec}

In the non-smooth case, all eigenvalues have order at most $2r-1$ in
$\varepsilon$. 
\begin{itemize}
\item for $p>2r-1$, we obtain an interpolation, since all eigenvalues of the
  smoother matrix go to 1. 

\item for $p=2r-1$, $ {\bF_{\varepsilon}}_{ i,i} \longrightarrow_{\varepsilon \rightarrow 0} 1, \forall i=1,\ldots, r$ and $\frac{\lt_{i}}{\lt_{i}+\frac{\sigma^2}{\gt}} , \forall i=r+1,\ldots, n$.

Thus, $\matr{M}_\varepsilon \longrightarrow \bQ_{\leq r} \bQ_{\leq r}^\top + \tilde{\bU}  {\bF_{\varepsilon}}_{\geq r+1}( {\bF_{\varepsilon}}_{\geq r+1} +\frac{\sigma^2}{\gt}\bI)^{-1} \widetilde{\bU}^\top$

Here we obtain the smoothing splines solutions, for which the splines of order $r$ are added to the polynomial regression of order $r-1$. 

%The model thus converges to $(l_p ; {\cal V}_p)$ where 
%the kernel associated is  $l_p(x,y)=(-1)^r | x-y |^{2r-1}$ and the basis functions given by the monomials up to order $r-1$, or 
%${\cal V}_p=(1,x,x^2,\ldots,x^{r-1})$.

\item for $p< 2r-1$, we recover the two last cases of the smooth-case above, depending on the parity of $p$. 

\end{itemize}

\begin{remark}
  Some examples will clarify the meaning of this result. For the Gaussian
  kernel, $r=\infty$ and so for every odd $r$ the smoother matrix goes to
  $\bQ_{\leq l} \bQ_{\leq l}^\top $ ($l = (r-1)/2$), the smoother matrix of a polynomial regression of degree $l$.
  For the exponential kernel, $r=1$, and so for $p=1$ the smoother matrix goes
  to the smoother matrix of a polyharmonic spline regression of degree $1$. For $p$ larger than
  1 all eigenvalues go to 1 and so the limit is an interpolation. 
\end{remark}

\subsubsection{Final step of the proof, equivalent asymptotic models}

The final step of the proof uses the results of section
\ref{sec:prediction-equivalence}. The limit of smoother matrices, which is
always of the form $\bM_\varepsilon=\bM_0+\O(\varepsilon)$. Examining the
proof of lemma \ref{lem:equivalence-smoother} and proposition
\ref{prop:pred-eq-smoother-semipar}, we see that the predictive mean and
variance depend smoothly on $\bM_\varepsilon$ and so we have asymptotic
predictive equivalence in the sense of definition
\ref{def:asymptotic-pred-eq}. Then we obtain, using the results of the two preceding sections, and restricting to the the case $r<n$:
\begin{itemize}
\item  $p$  even: The model  converges to $(l_p ; {\cal V}_p)$ where 
the kernel associated is  $l_p(x,y)=x^{p/2+1} y^{p/2+1}$ and the basis functions given by the monomials up to order $p/2$, or 
${\cal V}_p=(1,x,x^2,\ldots,x^{p/2})$.
\item  $p$ odd:
The model  converges in this case to $(l_p ; {\cal V}_p)$ where 
the kernel associated is  $l_p(x,y)=0 $ and the basis functions given by the monomials up to order $\lfloor p/2+1 \rfloor$, or 
${\cal V}_p=(1,x,x^2,\ldots,x^{\lfloor p/2+1 \rfloor})$.
\item $p=2r-1$: 
The model   converges to $(l_p ; {\cal V}_p)$ where 
the kernel associated is  $l_p(x,y)=(-1)^r | x-y |^{2r-1}$ and the basis functions given by the monomials up to order $r-1$, or 
${\cal V}_p=(1,x,x^2,\ldots,x^{r-1})$.

\end{itemize}
This concludes the proof.

%\section{Useful lemmas}
%
%\begin{lemma} (Inverse of a saddlepoint matrix)
%  \label{InverseSaddle:lem}
%  Let $\bA \in \R^{n\times n}$ an invertible matrix and a full rank  matrix $\bB\in \R^{n\times p}, p\leq n$. Then
%  \begin{eqnarray*}
%    \saddle{\bA}{\bB}^{-1} = 
%    \begin{pmatrix} 
%      \bA^{-1} - \bA^{-1} \bB (\bB^\top \bA^{-1} \bB)^{-1} \bB^\top \bA^{-1} & \bA^{-1} \bB (\bB^\top \bA^{-1} \bB)^{-1}   \\
%      (\bB^\top \bA^{-1} \bB)^{-1}\bB^\top \bA^{-1} & - (\bB^\top \bA^{-1} \bB)^{-1}
%    \end{pmatrix}
%  \end{eqnarray*}
%\end{lemma}

\subsection{Wronskians}

\subsubsection{Wronskian matrices from spectral representation}
\label{sec:wronskian-spectral}

In this section we derive some properties of the Wronskian matrices of
stationary kernel functions from their spectral representation. For stationary
kernels, we may write $k(\vect{x},\vect{y}) = \kappa(\vect{x}-\vect{y})$.

Bochner's theorem \cite{stein1999interpolation} tells us that $\kappa$ may be written as:
\begin{equation}
  \label{eq:Bochners-th}
  \kappa(\vect{x}-\vect{y}) = \int_{\R^d} \exp\left(\im \vo^\top(\vect{x}-\vect{y})\right) d\mu(\vo)
\end{equation}
where $\mu$ is a positive measure (the spectral measure). In fact, there is a
one-to-one correspondence between measures and kernel functions. Assume further
that $\mu$ has a density ({\it i.e.} is absolutely continous relative to the Lebesgue
measure on $\R^d$). Then eq. \eqref{eq:Bochners-th} can be rewritten as:
\begin{equation}
  \label{eq:Bochners-th-density}
  \kappa(\vect{x}-\vect{y}) = \int_{\R^d} \exp\left(\im \vo^\top(\vect{x}-\vect{y})\right) q(\vo) d \vo
\end{equation}
where $q = \mu'$ is the density corresponding to the spectral measure $\mu$,
called the spectral density. Eq. \eqref{eq:Bochners-th-density} implies that in
this case $\kappa$ and $q$ are Fourier transform pairs. Since integrable
functions have continuous Fourier transforms, if $\kappa$ is integrable it has a
spectral density. 

We follow the normalisation convention of \cite{stein1999interpolation} and note:
\begin{equation}
  \label{eq:spectral-density}
  q(\vo) = \frac{1}{(2\pi)^d}\int_{\R^d} \exp \left(-\im \vo^\top\vect{x} \right) \kappa(\vect{x}) d \vect{x}
\end{equation}
the spectral density of the kernel.
The spectral representation of $k$ lets us link the Wronskian matrix to moments
of $q$. Partial derivatives $ k^{(\va, \vb)}$ of $k$ may be computed as:
\begin{align*}
  \frac{\partial^{ |\va|+ |\vb|}}{\partial\vect{x}^{\va} \partial \vect{y}^{\vb} } k(\vect{x},\vect{y}) &=
                                                                                                                                                       \frac{\partial^{ |\va|+ |\vb|}}{\partial\vect{x}^{\va} \partial \vect{y}^{\vb} } \int _{\R^d} \exp(\im \vo^\top(\vect{x}-\vect{y})) d \mu(\vo)   \\
  & = \int_{\R^d} \im^{|\va|+ |\vb|}(-1)^{|\vb |} \vo^{\va+ \vb} \exp(\im \vo^\top(\vect{x}-\vect{y})) d \mu(\vo)
\end{align*}
Thus, the ($\va$,$\vb$) element of the Wronskian matrix equals:
\begin{align*}
  \matr{W}_{\va,\vb} &= \frac{1}{\va! \vb!}  k^{(\va, \vb)}(0,0) \\
                          &= \frac{1}{\va! \vb!}  \int_{\R^d} \im^{|\va|+ |\vb|}(-1)^{\vb} \vo^{\va+ \vb} d \mu(\vo) \\
  &= \frac{1}{\va! \vb!}  \im^{|\va|+|\vb|}(-1)^{|\vb |} \E_{\mu} \left( \vo^{\va+ \vb} \right)
\end{align*}
where $\E_{\mu}$ designates expectation under $q$ (note that $q$ is not normalised).
In general, not all moments of $q$ exist, which is an equivalent way of stating
that not all derivatives exist \cite{stein1999interpolation}. The fact that $q$
is symmetric around the origin immediately yields that $\E_{\mu}\left(
  \vo^{\va+ \vb} \right) \neq 0$ if and only if
$|\va+ \vb|$ is even. We obtain the following compact expression:
\begin{equation}
  \label{eq:wronskian-moments}
  W_{\va,\vb} =
  \begin{cases}
     \frac{1}{\va! \vb!}  (-1)^{p+|\vb|} \E_{\mu} \left( \vo^{\va+ \vb} \right) \mathrm{\ if\ } |\va+ \vb|= 2p, p \in \mathbb{Z} \\ 
 0 \mathrm{\ otherwise }
  \end{cases}
\end{equation}
\begin{remark}
  In certain cases further simplification is possible. If the kernel function is
separable (which is the case for instance for the squared-exponential kernel),
then $q$ is a product distribution: $q(\vo) = \prod_{j=1}^d p(\omega_j)$, and 
$\E_{\mu}(\vo^{\va+ \vb}) = \prod_{j=1}^d
\E_p(\omega_j^{\alpha_j+\beta_j})$. In the special case of Matérn kernels, $q$
is a multivariate Student's t distribution, and a simple expression for the
moments can be found in \cite{kotz2004multivariate}. 

\end{remark}
We now use eq. \eqref{eq:wronskian-moments} to prove that Wronskians are
positive-definite. The positive-definiteness is strict as long as the spectral
density exists.

\begin{lemma}
  Let $\bW_m = [W_{\va,\vb}]_{\va,\vb \in
    \mathbb{P}_{m}}$ the Wronskian (of order $m$) of a stationary kernel of
  order $r \geq m$. Then $\bW_m \geq 0$. Further, if the kernel has a spectral
  density then $\bW_m > 0$.
\end{lemma}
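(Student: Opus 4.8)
The plan is to realise the quadratic form attached to $\bW_m$ as the integral of a manifestly non-negative function against the spectral measure, using the moment representation \eqref{eq:wronskian-moments}. Let $\mu$ be the spectral measure of $\kappa$ as in \eqref{eq:Bochners-th}. Since $\kappa$ has regularity $r \ge m$, the moments $\E_{\mu}(\vo^{\va+\vb})$ are finite for all multi-indices with $\va,\vb \in \mathbb{P}_m$, by the dictionary between smoothness of a stationary covariance at the origin and existence of moments of its spectral measure (see \cite{stein1999interpolation} and the discussion around \eqref{eq:wronskian-moments}). Given a real coefficient vector $\vect{c} = (c_{\va})_{\va \in \mathbb{P}_m}$, I would introduce the polynomial
\[ P_{\vect{c}}(\vo) = \sum_{\va} \frac{c_{\va}}{\va!}\, \im^{|\va|}\, \vo^{\va}, \]
and claim that $\vect{c}^{\top} \bW_m \vect{c} = \int_{\R^d} \lvert P_{\vect{c}}(\vo)\rvert^{2}\, d\mu(\vo) \ge 0$.

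The verification of this identity is the short algebraic core. For real $\vo$ and real $\vect{c}$ one has $\overline{P_{\vect{c}}(\vo)} = \sum_{\vb}\frac{c_{\vb}}{\vb!}(-\im)^{|\vb|}\vo^{\vb}$, so
\[ \lvert P_{\vect{c}}(\vo)\rvert^{2} = \sum_{\va,\vb}\frac{c_{\va} c_{\vb}}{\va!\,\vb!}\,\im^{|\va|}(-\im)^{|\vb|}\,\vo^{\va+\vb} = \sum_{\va,\vb}\frac{c_{\va} c_{\vb}}{\va!\,\vb!}\,\im^{|\va|+|\vb|}(-1)^{|\vb|}\,\vo^{\va+\vb}. \]
Integrating term by term against $d\mu$ and comparing with \eqref{eq:wronskian-moments} yields $\sum_{\va,\vb}c_{\va} c_{\vb} W_{\va,\vb} = \int_{\R^d} \lvert P_{\vect{c}}(\vo)\rvert^{2}\, d\mu(\vo)$; the terms with $\lvert\va+\vb\rvert$ odd contribute nothing on either side, since $\mu$ is symmetric about the origin. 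This establishes $\bW_m \ge 0$.

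For the strict statement, assume $\kappa$ has a spectral density, i.e.\ $d\mu = q\,d\vo$ with $q \ge 0$. Then $\int_{\R^d} q(\vo)\,d\vo = \kappa(\vect{0}) > 0$, so $q$ is not Lebesgue-almost-everywhere zero. If $\vect{c}^{\top}\bW_m\vect{c} = 0$, then $\int_{\R^d} \lvert P_{\vect{c}}\rvert^{2} q \, d\vo = 0$ with non-negative integrand, hence $\lvert P_{\vect{c}}(\vo)\rvert^{2} q(\vo) = 0$ for a.e.\ $\vo$. Were $P_{\vect{c}} \not\equiv 0$, its zero set would have Lebesgue measure zero, forcing $q = 0$ a.e.\ on its complement and therefore a.e.\ on $\R^d$, contradicting $\int q > 0$. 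Hence $P_{\vect{c}} \equiv 0$, and since the monomials $\{\vo^{\va}\}$ are linearly independent, $\vect{c} = 0$. Thus $\bW_m > 0$.

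The one delicate point is the finiteness of the integral defining the quadratic form, which is exactly what the hypothesis $r \ge m$ supplies through the moment–smoothness correspondence; once that is secured the rest is purely formal. A secondary point worth a line is that if $\mathbb{P}_m$ is read as the homogeneous degree-$m$ block of indices, then $P_{\vect{c}}$ is a homogeneous polynomial of degree $m$ (still nonzero when $\vect{c}\neq 0$), and the zero-set argument applies verbatim.
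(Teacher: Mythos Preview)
Your proof is correct and follows essentially the same approach as the paper: both realise $\bW_m$ as a Gram matrix by setting $\psi_{\va}(\vo)=\frac{\im^{|\va|}}{\va!}\vo^{\va}$ and identifying $\vect{c}^{\top}\bW_m\vect{c}$ with $\int |P_{\vect{c}}|^2\,d\mu$, then use that a nonzero polynomial vanishes only on a Lebesgue-null set to upgrade to strict positivity under the spectral-density hypothesis. Your formulation of the strictness step (arguing that $q$ cannot be a.e.\ zero since $\int q=\kappa(\vect{0})>0$, then forcing $P_{\vect{c}}\equiv 0$) is in fact slightly cleaner than the paper's ``$d$-dimensional support'' phrasing, but the underlying idea is identical.
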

\begin{proof}
  We will show that $\bW_m$ is a Gram matrix, which implies
  positive-definiteness. Consider the following sequence of functions:
  $\Psi_{\gamma}(\vo)= \frac{1}{\vect{\gamma}!} (\im
  \vo)^{\vect{\gamma}}$ where $\vect{\gamma}$ runs over
  $\mathbb{P}_m$. We define a dot product from $q$:
  \begin{equation*}
    <\phi,\eta> = \int_{\R^d}  \phi(\vo) \overline{\eta(\vo)} d \mu(\vo)
  \end{equation*}
  Then:
  \begin{align*}
    <\psi_{\va},\psi_{\vb}> &= \int  \frac{1}{\va!} (\im  \vo)^{\va}  \overline{ \frac{1}{\vb!} (\im  \vo)^{\vb} } d \mu(\vo) \\
                                               &= \frac{1}{\va! \vb!} (-1)^{|\vb|} \int  (\im)^{|\va|+|\vb|} \vo^{\va+\vb} d \mu(\vo) \\
                                               &= W_{\va,\vb}
  \end{align*}
which verifies that $\matr{W}$ is indeed a Gram matrix, and implies $\matr{W}
\geq 0$. To go further and prove positive-definiteness, we now assume that the
spectral density exists. Consider the quadratic form
$\vect{p}^\top\matr{W}\vect{p}$, with $\vect{p} \neq 0$:
\begin{align*}
  \vect{p}^\top\matr{W}\vect{p} &= \sum_{\va,\vb} p_{\va} W_{\va,\vb}p_{\vb}  \\
                             &=    \sum p_{\va} <\psi_{\va},\psi_{\vb}> p_{\vb} \\
                             &=  < \sum p_{\va} \psi_{\va},\sum p_{\vb} \psi_{\vb}> \\
                             &= \norm{\sum p_{\va} \psi_{\va}}_\mu^2 \numberthis \label{eq:quad-form-as-norm}
\end{align*}
 where $\norm{f}_\mu^2$ is the norm induced by the dot product we have defined,
 {\it i.e.}:

 \begin{equation}
   \label{eq:norm-mu}
   \norm{f}_\mu^2 = \int_{\R^d} f(\vo)\overline{f(\vo)}q(\vo)d\vo
 \end{equation}
Obviously, $\norm{f}_\mu^2 = 0$ if and only if $f$ vanishes almost everywhere on
the support of $\mu$. 

Recall that the set of functions $\psi_{\va}$ are a subset of the monomials (up
to scaling). In eq. \eqref{eq:quad-form-as-norm} $\eta_p(\vo) = \sum p_{\va}
\psi_{\va}(\vo)$ is a complex-valued polynomial of degree
$m$, so $\norm{\eta_p(\vo)}_\mu^2 \neq 0$ unless
$\eta_p(\vo) = 0$ almost everywhere on the support of $\mu$. Since $\mu$ is absolutely
continuous w.r.t the Lebesgue measure on $\R^d$, the support of $\mu$ is a
$d$-dimensional subset of $\R^d$. Polynomials in $\R^d$ can only vanish on a
subspace of dimension less than $d$,
% [TODO: cite something here?],
 so
$\norm{\eta_p(\vo)}_\mu^2 > 0$ for all non-zero $\vect{p}$.

\end{proof}

\subsubsection{Expressions for the Wronskian in some special cases}
\label{sec:wronskian-special-cases}

The Wronskian for the squared exponential kernel can be worked out directly from eq.
\eqref{eq:wronskian-moments} and known formulas for Gaussian moments.
The spectral density of the squared exponential kernel equals:
\begin{equation}
  \label{eq:spectral-dens-gaussian}
  q(\vo) = \frac{1}{(2\pi)^d}\int_{\R^d} \exp \left(-\im \vo^\top\vect{x} \right) \exp(-\norm{\vect{x}}^2) d \vect{x}
   = \frac{1}{(2\sqrt{\pi})^d} \exp(-\frac{\norm{\vo}^2}{4})
\end{equation}
which is the (normalised) density of a $d$-dimensional  Gaussian vector with independent entries of
 variance $\sigma^2=2$. The $p$-th moment of a univariate, centered Gaussian
variate $z$ equals $\E(z^p) = \sigma^p (p-1)!!$ if $p$ is even, and 0 otherwise.
Here $n!!$ designates the so-called ``double factorial'', which if $n$ is odd,
equals the product $n(n-2)(n-4) \ldots 1$. Since $q$ is separable, we have:
\begin{equation}
  \label{eq:gaussian-moments}
  \E_{\mu}(\omega^{\vect{\gamma}}) =
  \begin{cases}
    \prod_{i=1}^d 2^{\gamma_i/2} (\gamma_i-1)!! \mathrm{\ if \ } \gamma_1, \ldots, \gamma_d \mathrm{\ even} \\
    0 \mathrm{\ otherwise}
  \end{cases}
\end{equation}

Injecting eq.
\eqref{eq:spectral-dens-gaussian} into eq. \eqref{eq:wronskian-moments}, we
obtain:
\begin{equation}
  \label{eq:gaussian-wronskian}
  W_{\va,\vb} = 
  \begin{cases}
   \frac{\left(  \va+\vb-1\right)!! }{\va!\vb!}  (-2)^{\frac{\va+\vb}{2}} (-1)^{\vb}  \mathrm{\ if \ } \alpha_1+\beta_1, \ldots, \alpha_d+\beta_d \mathrm{\ even} \\
    0 \mathrm{\ otherwise}
  \end{cases}
\end{equation}

\subsubsection{The Wronskian matrix and orthogonal polynomials}
\label{sec:wronskian-orth-poly}

Let $h_0, \ldots, h_s$ denote the first $s$ orthogonal polynomials of the
spectral measure. There is a close relationship between the orthogonal
polynomials and the matrix of moments:
\begin{align*}
  \E(\vo^{\va+\vb}) &= \E(\vo^{\va}\vo^{\vb}) \\
                    &= \E \left( \left(\sum_{|\vg|\leq |\va|} A_{\vg,\va} h_{\vg}(\vo) \right) \left(\sum_{|\vg'|\leq |\vb|} A_{\vg',\vb} h_{\vg'}(\vo) \right) \right)\\
                    &= \sum_{\vb,\vb'} A_{\vg,\vb} A_{\vg',\vb} \E( h_{\vg} h_{\vg'}) \\
                    &= \sum_{\vb} A_{\vg,\va} A_{\vg,\vb} \E( h_{\vg}^2 ) \\
                    &= (\matr{A}\matr{H} \matr{A}^\top)_{\va,\vb}
                      \numberthis   \label{eq:moments-orth-poly}
\end{align*}
which gives an LDLt decomposition of the matrix of moments (H is diagonal and
its diagonal values are the energies of the orthogonal polynomials). Using the same
trick as in the previous section, and defining $B_{\va,\vg} =
\frac{(\im)^{\va}}{\va!}  A_{\va,\vg}$, we find the LDL* decomposition of $\bW$,
specifically:
\begin{equation}
  \label{eq:ldl_decomp_wronskian}
  \bW = \matr{B} \matr{H} \matr{B}^*
\end{equation}

In certain cases the elements of $\matr{B}$ are available in closed-form. For
example in the squared-exponential case the orthogonal polynomials of the
measure are the Hermite polynomials.

\subsubsection{Wronskian matrix of Matérn kernels}
\label{sec:wronskian-matern}
In this section we consider Matérn kernels with regularity parameter $\nu $,
with spectral density as in \cite{williams2006gaussian}, p.84:
\begin{equation}
  \label{eq:Matern-spd}
  q(\vo) = \frac{(2\sqrt{\pi})^d\Gamma(\nu+d/2)(2\nu)^\nu}{\Gamma(\nu)}\left( 2\nu+4\pi^2 \norm{\vo}^2 \right)^{-\nu-d/2}
\end{equation}
One may verify that this is also the density of a multivariate $t$-distribution,
with degrees of freedom $2\nu$. The $t$-distribution has finite moments of order
$2\nu-1$, so that the order of regularity $r$ of the kernel equals $2\nu$. For
instance, if $\nu=1/2$, then $q(\vo)$ is integrable but has no other
finite moments exist, meaning equivalently the kernel function is not
differentiable. The Matern kernel with $\nu = 1/2$ is actually the exponential
kernel. 
The moments of $q$ are given in \cite{kotz2004multivariate}, p. 11.

The Wronskian matrix for the Matèrn kernels (and others) can be obtained from the expression
for the Gaussian kernel. In the expansion:
\[ \bK(\epsilon) = f_0 \bD^0 + f_2 \epsilon^2 \bD^{(2)} + \dots + f_{2r-1}
  \epsilon^{2r-1} \bD^{2r-1} \]
the coefficients $f_0,f_2,\dots $ depend on the kernel but the distance matrices
do not, and it is the latter that can be expressed in the monomial basis.
For radial kernels it is enough to expand the radial function at 0 to obtain
the correct coefficients.

\subsubsection{Inverse and Schur complements of Wronskian matrices for separable kernels}
\label{sec:schur-compl-wronskians}

Theorem \ref{thm:equivalent-kernels-nd} involves Schur complements in the
Wronskian matrix (eq. \eqref{eq:wronskian-schur}). A result in
\cite{HeltonLasserrePutinar2008} shows that these Schur complements are diagonal for separable kernels. Indeed, rephrased for our need, we have:
\begin{theorem}[Th. 3.1 in \cite{HeltonLasserrePutinar2008}]
  \label{thm:heltonlasserre}
  Consider a random vector $X$ with values in $\R^d$ and with product measure  $p(X)=\prod_i p(X_i)$. Let $M_k\in \R^{{ \cal P}_{k,d}\times {\cal P}_{k,d}}$ be the moment matrix with entries  $M_{\alpha,\beta}=\E_p[X^\alpha X^\beta]$. 
  Note that the maximal degree of the multiindexes considered is $k$. 
  
 Then $M^{-1}_{\alpha,\beta} $ can be different from 0 if and only if $\sum_{i=1}^d \max (\alpha_i,\beta_i) \leq k$.
 
 Alternately, $M^{-1}_{\alpha,\beta} $ is necessarily zero (called a congenital zero in  \cite{HeltonLasserrePutinar2008}) iff
  $\sum_{i=1}^d \max (\alpha_i,\beta_i) > k$
\end{theorem}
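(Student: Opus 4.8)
The plan is to identify $(M_k^{-1})_{\va,\vb}$ with a monomial coefficient of the reproducing kernel of $p$ on polynomials of degree at most $k$, and then read off the forced zeros from the degrees of the univariate orthogonal polynomials. First I would recall that, with $\mathbf v(\bx) = (\bx^\va)_{|\va|\le k}$, the moment matrix $M_k = \int \mathbf v(\bx)\mathbf v(\bx)^\top p(\bx)\,d\bx$ is positive definite (it is assumed invertible, which here holds as soon as $p$ has full-dimensional support and $2k$ finite marginal moments), and that the function $K_k(\bx,\by) := \mathbf v(\bx)^\top M_k^{-1}\mathbf v(\by)$ is then the reproducing kernel of $\R[\bx]_{\le k}$ for $\langle\cdot,\cdot\rangle_p$; it verifies the reproducing property because $\int q(\bx)K_k(\bx,\by)\,p(\bx)\,d\bx = \mathbf q^\top M_k M_k^{-1}\mathbf v(\by) = q(\by)$ for every $q\in\R[\bx]_{\le k}$ with coefficient vector $\mathbf q$. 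In particular $(M_k^{-1})_{\va,\vb}$ is exactly the coefficient of $\bx^\va\by^\vb$ in $K_k(\bx,\by)$, so the statement becomes a claim about which such coefficients are forced to vanish for every product measure.

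The key structural input is that for a product measure the orthogonal polynomials factorise. I would let $h^{(i)}_j$ denote the $j$-th (normalised) orthogonal polynomial of the marginal $p_i$ and set $h_\vg(\bx) = \prod_{i=1}^d h^{(i)}_{\gamma_i}(x_i)$; these are orthonormal and $\{h_\vg : |\vg|\le k\}$ is a basis of $\R[\bx]_{\le k}$, so Parseval in this finite-dimensional space gives $K_k(\bx,\by) = \sum_{|\vg|\le k} h_\vg(\bx)\,h_\vg(\by)$. Since $h^{(i)}_{\gamma_i}$ has degree exactly $\gamma_i$ in $x_i$, the monomial $\bx^\va\by^\vb$ can occur in the $\vg$-th summand only if $\gamma_i\ge\alpha_i$ and $\gamma_i\ge\beta_i$ for every $i$, i.e.\ only if $\vg$ dominates $\va\vee\vb := (\max(\alpha_i,\beta_i))_i$ componentwise; and such a $\vg$ with $|\vg|\le k$ exists iff $|\va\vee\vb| = \sum_i\max(\alpha_i,\beta_i)\le k$. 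This gives the ``necessarily zero'' direction at once: when $\sum_i\max(\alpha_i,\beta_i)>k$ the monomial $\bx^\va\by^\vb$ appears in no summand of $K_k$, hence $(M_k^{-1})_{\va,\vb}=0$. The same argument transfers verbatim to the Wronskian matrix of a separable kernel, since by eqs.\ \eqref{eq:wronskian-moments}--\eqref{eq:ldl_decomp_wronskian} $\bW_{\le k}$ is a (complex) diagonal congruence of the moment matrix of its spectral measure, which is a product measure in the separable case, so $\bW_{\le k}^{-1}$ has the same zero pattern; that is what we will invoke to conclude that the Schur complements in eq.\ \eqref{eq:wronskian-schur} are diagonal.

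For the converse I would show that whenever $\sum_i\max(\alpha_i,\beta_i)\le k$ the entry fails to vanish for at least one product measure: take each $p_i$ to have a density on a distinct bounded interval with no symmetry, so that every coefficient $[x_i^{j}]\,h^{(i)}_{\gamma_i}$ with $j\le\gamma_i$ is nonzero. The contribution of the minimal admissible index $\vg^\star=\va\vee\vb$ to the coefficient of $\bx^\va\by^\vb$ in $K_k$ is then a nonzero product, and I would argue it cannot be cancelled by the contributions of the larger indices $\vg\succ\vg^\star$ because, viewed as functions of the marginal moments, the contributions of distinct $\vg$ are polynomially independent. This non-cancellation step is the only part that takes real work; since only the ``necessarily zero'' direction is used in the paper, I would either carry it out along these lines or simply defer to \cite{HeltonLasserrePutinar2008}. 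Combining the two directions yields the claimed equivalence.
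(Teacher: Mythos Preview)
The paper does not provide its own proof of this theorem; it simply states it as Theorem~3.1 of \cite{HeltonLasserrePutinar2008} and uses the ``congenital zero'' direction to conclude that the relevant Schur complement blocks are diagonal. So there is no in-paper argument to compare against.

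Your argument for the forced-zero direction is correct and clean. The identification of $(M_k^{-1})_{\va,\vb}$ with the coefficient of $\bx^{\va}\by^{\vb}$ in the Christoffel--Darboux kernel $K_k(\bx,\by)=\sum_{|\vg|\le k}h_{\vg}(\bx)h_{\vg}(\by)$ is the right reduction, and the factorisation $h_{\vg}(\bx)=\prod_i h^{(i)}_{\gamma_i}(x_i)$ for product measures gives the degree constraint $\gamma_i\ge\max(\alpha_i,\beta_i)$ immediately; this yields the vanishing when $\sum_i\max(\alpha_i,\beta_i)>k$. That is exactly the direction the paper uses, so your proof is more than sufficient for the paper's purposes and in fact supplies an argument the paper omits.

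For the converse (that $(M_k^{-1})_{\va,\vb}$ \emph{can} be nonzero when $\sum_i\max(\alpha_i,\beta_i)\le k$), your genericity sketch is plausible but not complete: the claim that contributions from distinct $\vg\succeq\va\vee\vb$ are ``polynomially independent'' in the marginal moments needs a concrete argument (e.g.\ isolating the minimal $\vg^\star=\va\vee\vb$ by tracking the highest-order dependence on a suitable moment). As you say, this direction is not used anywhere in the paper, so deferring to \cite{HeltonLasserrePutinar2008} is entirely acceptable.
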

As an exemple, consider $d=2$ and $k$ up to 3. The pattern provided by the theorem is depicted in the following matrices: all the entries $\star$ can take different values from zero, all the others are necessarily 0:
\[
M_1^{-1}=\left(\begin{array}{c|c|c c|c c  c}
 & (0,0) & (1,0) & (0,1)  \\\hline 
 (0,0) &\star & \star &\star \\\hline
  (1,0) & & \star & 0   \\
  (0,1) & &  & \star  \\\hline 
  \end{array}\right)
\]
\[
M_2^{-1}=\left(\begin{array}{c|c|c c|c c  c}
 & (0,0) & (1,0) & (0,1) & (2,0) & (1,1) & (0,2) \\\hline 
 (0,0) &\star & \star & \star & \star & \star& \star \\\hline
  (1,0) & & \star & \star & \star & \star & 0  \\
  (0,1) & &  & \star & 0 & \star & \star  \\\hline 
   (2,0) &  &  &  & \star & 0 & 0 \\
  (1,1) & &  &  &  & \star & 0  \\
   (0,2) &  &  &  &  &  & \star 
  \end{array}\right)
\]
\[
M_3^{-1}=\left(\begin{array}{c|c|c c|c c  c|c c c c}
 & (0,0) & (1,0) & (0,1) & (2,0) & (1,1) & (0,2) & (3,0) & (2,1) & (1,2) & (0,3) \\\hline 
 (0,0) &\star & \star & \star & \star & \star& \star & \star& \star& \star& \star\\\hline
  (1,0) & & \star & \star & \star & \star & \star & \star & \star & \star & 0 \\
  (0,1) & &  & \star & \star & \star & \star & 0 & \star & \star &\star \\\hline 
   (2,0) &  &  &  & \star & \star & 0 & \star & \star & 0 & 0 \\
  (1,1) & &  &  &  & \star & \star & 0 & \star & \star & 0 \\
   (0,2) &  &  &  &  &  & \star & 0 & 0 & \star & \star \\\hline 
 (3,0) &  &  &  &  &  &  & \star & 0 & 0 & 0 \\
  (2,1) &  &  & &  &  &  &  & \star & 0 & 0 \\
 (1,2) &  &  &  &  &  &  & &  & \star & 0 \\
 (0,3) &  &  &  &  &  &  &  &  &  & \star
  \end{array}\right)
\]
Since the Schur complements we need are diagonal subblocks $M^{-1}_{\alpha,\alpha}$ for $|\alpha|=k$, Th. 3.1 in \cite{HeltonLasserrePutinar2008} directly shows that these are diagonal! 
This result allows to proove corollary \ref{cor:flat-limit-gaussian}.

\subsubsection{Proof of corollary \ref{cor:flat-limit-gaussian}}
\label{sec:proof-corollary-gaussian}
\sloppy

We just have to proove that the flat limit kernel for the Gaussian kernel rescaled by $\varepsilon^{-p}$ is $\rho_p(\vect{x},\vect{y})=(\vect{x}^\top \vect{y})^{p/2}$ for even $p$. 

From the main result  \ref{thm:equivalent-kernels-nd} we know that the limiting kernel in this case  is 
$l(\bx,\by) = \sum_{|\ba|=m,|\bb|=m} \bar{W}_m(\ba,\bb) \bx^{\ba}\by^{\bb}$.
Since $ \bar{W}_m(\ba,\bb)$ is diagonal this reduces to $l(\bx,\by) = \sum_{|\ba|=m} {\cal H}_{m,d}\bar{W}_m(\ba,\ba) \bx^{\ba}\by^{\ba}$, where we recall that ${\cal H}_{m,d}$ is the number of monomials of degree $m$ in dimension $d$.

$\bar{W}_m(\ba,\ba)$ are the diagonal terms of the Schur complement discussed above. From elementary block inverses lemma, we know that the Schur complement needed is the inverse of the corresponding block in the inverse of $\vect{W}_m$.

Let $\vect{\Delta}_{\va}$ be a diagonal matrix with elements $ \im^{|\va|}/\va!$. Then $\vect{W}_m=\vect{\Delta}_{\va}  \vect{M}_m\vect{\Delta}_{\vb}^*$. Thus  $\vect{W}_m^{-1}=\vect{\Delta}_{\vb}^{-*}  \vect{M}_m^{-1}\vect{\Delta}_{\va}^{-1}$.
Since the block we are interested in is diagonal, we end up with $\bar{W}_m(\ba,\ba)= 1/ \left( \va!^2 \vect{M}_m^{-1}(\va,\va)\right)$. We thus need to calculate the diagonal terms in the inverse of the moment matrix. 

To do so, a trick is to use the orthonornal polynomials associated we the measure at end (here the isotropic Gaussian, see above). Writing the orthonormal polynomials as $p_{\va} = \sum_{\bga \leq \va} d_{\va,\bga} {\bx}^{\bga}$, we obtain
$\E p_{\va} p_{\vb} =\delta_{\va,\vb}=\sum_{\bga \leq \va, \brho \leq \vb} d_{\bga,\brho} \vect{M}_m (\bga,\brho) d_{\vb,\brho}$. Note that the matrix $\vect{D}$ with entries $d_{\va,\vb}$ is lower triangular (orthogonality) and invertible. 
Therefore, $I= \vect{D}  \vect{M}_m \vect{D}^\top$ so that $ \vect{M}_m^{-1}=\vect{D}^\top \vect{D}$. Thus, 
$ \vect{M}_m^{-1} (\va,\va) =\sum_{\va\leq\bga} d^2_{\bga,\va}= d_{\va,\va}^2$ since in the block of order $m$ all terms but the diagonal terms are equal to zero. Now, $d_{\va,\va}$ may be found using the norm of the polynomial $p_{\va}$ which writes $\E p_{\va} p_{\va} =1 = \sum_{\bga \leq \va} d_{\va,\bga}\E p_{\va} \bx^{\bga}= d_{\va,\va}\E p_{\va} \bx^{\va}$ again because $d$ is diagonal for the term of maximal degree. The final twist here uses the fact that the measure is a product measure $\mu = \prod \mu_i$, which implies that the orthonormal polynomials $p_{\va} $ associated  are products of the orthonormals polynomials $p_{\alpha_i}$ of the measures $\mu_i$. This implies that $d_{\va,\va}= \prod d_{\alpha_i,\alpha_i}$. In the Gaussian case considered, $d_{\alpha_i,\alpha_i}$ is thus simply the norm of the Hermite polynomial of degree $\alpha_i$.  For our case, $d_{\alpha_i,\alpha_i}^2=1/(2^{\alpha_i} \alpha_i!)$ so that $d_{\va,\va}^2=1/(2^{|\va | } \va!)$ and $\bar{W}_m(\ba,\ba)=2^{|\va | }/\va!$. Thus $l(\bx,\by) \propto \sum_{|\ba|=m} m!/\va! \bx^{\ba}\by^{\ba}=(\bx^\top\by)^m$. Since proportionality here leads to prediction-equivalence, this ends the proof of corollary \ref{cor:flat-limit-gaussian}.

\subsection{Extension to general linear measurements}
\label{sec:general-linear}
%\color{blue}
Here we sketch the extension of our results to general linear measurements of a
GP, as used in linear inverse problems. As a concrete example, suppose we take
Fourier measurements of an unknown function $f$:
\begin{equation}
  \label{eq:fourier-measurements}
  y_k = \int_0^{1} f(x) e^{2\pi i k x} dx + v_i
\end{equation}
with $v_i \sim \N(0,\sigma^2)$. The goal is to reconstruct $f$ from $y_1 \ldots
y_n$. We do so by assuming $f$ is sampled from a Gaussian process, and
estimating $f(x)$ via its posterior expectation $\E( f(x) | \by)$.

Let us first set up some notation. To generalise beyond eq.
(\ref{eq:fourier-measurements}), we assume that each measurement is the output
of a linear functional acting on $f$:
\begin{equation}
  \label{eq:fourier-measurements}
  y_i = \Braket{ \phi_i | f} + \nu_i
\end{equation}
Here the linear functional may be for instance an integral operator (as in eq.
(\ref{eq:fourier-measurements})), or a differential operator. We recover the
usual setup (point evaluation of $f$) with $\phi_i = \delta_{x_i}$, the Dirac delta at $x_i$. 

Assume that $f \sim GP(0,k)$ with $k(x,y)$ a kernel function. Then one can show \cite{sarkka2011linear}
that the vector $[\Braket{\phi_i,f}]_i$ has a multivariate Gaussian distribution
with mean $\E \Braket{\phi_i | f} = \Braket{\phi_i | \E(f)}) = 0$ and covariance
\begin{equation}
  \label{eq:covariance-gen}
  M_{ij} = \Braket{ \phi_j | \Braket{\phi_i | k }_x}_y = \Braket{\phi_i | k | \phi_j} 
\end{equation}
where $\Braket{\phi | f(x,y)}_x = \Braket{\phi | f(\cdot,y)}$ evaluates the
operator along $x$ with $y$ fixed, and similarly for $\Braket{\phi | f(x,y)}_x$.

Note for later that
\[ \Braket{\phi_i | f + g | \phi_j} = \Braket{\phi_i | f |
    \phi_j}+\Braket{\phi_i | g | \phi_j} \]
and if $h(x,y)=f(x)g(y)$ is a separable function, then
\begin{equation}
  \label{eq:separability}
  \Braket{\phi_i | h  | \phi_j} = \Braket{\phi_i | f }\Braket{g | \phi_j}
\end{equation}

The expectation of $f(x)$ given $\by$ by noting that $f,\by$ are jointly
Gaussian again, and by the usual conditioning formulas
\begin{equation}
  \label{eq:conditional-gen}
  \E(f(x)|\by) = \mm_x^t (\bM+\sigma^2\bI)^{-1} \by
\end{equation}
with $\mm_x^t = \left[ \Braket{\phi_1|k(x,\cdot)} \ldots
  \Braket{\phi_n|k(x,\cdot)} \right]$.

On the other hand, we may also try to solve the problem using polynomials (of
fixed degree $s \leq n$). In this case the natural thing to do is to solve:
\begin{equation}
  \label{eq:solve-invprob-poly}
  \underset{f \in \mathcal{P}_s }{\argmin}{ \sum_i(y_i-\Braket{\phi_i|f})^2}
\end{equation}
where $\mathcal{P}_s$ is the set of polynomials of degree $\leq s$. That is a
simple, finite-dimensional least-squares problem. Noting
\begin{equation}
  \label{eq:invprob-poly}
  S_{ij} = \Braket{\phi_i|x^j}
\end{equation}
we have the estimate
$\ft(x) = \bv_x^t (\bS \bS^t)^{-1}\bS^t \by $ with $\bv_x^t=\left[ x^0 \ldots
  x^s \right]$. 

We can now proceed with the flat limit expansion. For brevity, we do this for
$d=1$ and $r=\infty$ but the general case proceeds along the same lines. 
If we inject the expansion
\begin{equation}
  \label{eq:kernel-exp}
  k_\epsilon(x,y) = \beta_0 + \beta_2 \epsilon^2 (x-y)^2 + \beta_4 \epsilon^4 (x-y)^4 + \ldots
\end{equation}
into the definition of the matrix $\bM$ (eq. (\ref{eq:covariance-gen})) we find
\begin{align*}
  M_{ij}(\epsilon) &=  \Braket{\phi_i | k_\epsilon | \phi_j} \\
                   &= \Braket{\phi_i | \beta_0 + \beta_2 \epsilon^2 (x-y)^2 + \beta_4 \epsilon^4 (x-y)^4 | \phi_j} \\
                   &= \beta_0 \Braket{\phi_i|1|\phi_j} +
                     \epsilon^2 \beta_2\Braket{\phi_i|(x-y)^2|\phi_j}+
                     \epsilon^4 \beta_2\Braket{\phi_i|(x-y)^4|\phi_j}+\ldots
\end{align*}
If we now use the binomial expansion, along with eq. (\ref{eq:separability}), we
find:
\begin{align*}
  \label{eq:kernel-exp-modified}
  M_{ij}(\epsilon) &=  \beta_0 \Braket{\phi_i|1}\Braket{1|\phi_j} +
  \epsilon^2 \beta_2 \left\{ \Braket{\phi_i|x^2}\Braket{\phi_j|1}
    -2\Braket{\phi_i|x}\Braket{\phi_j|y} + \Braket{\phi_i|1}\Braket{\phi_j|y^2}
  \right\} 
  + \ldots \\
  &= \beta_0 S_{i,0}S_{j,0} + \beta_2\epsilon^2 \left\{ S_{i,2}S_{j,0}- 2S_{i,1}S_{j,1}+S_{i,0}S_{j,2} \right\} + \ldots
\end{align*}
This allows us to write $\bM$ in the form
\begin{equation}
  \label{eq:kernel-exp-matrix-form}
  \bM(\epsilon) = \bS\matr{\Delta}(\epsilon)\bW\matr{\Delta}(\epsilon)\bS^t + \ldots
\end{equation}
With this in hand, all the results from \cite{BarthelmeUsevich:KernelsFlatLimit}
follow in a modified
form, with $\bS$ playing the role of the Vandermonde matrix $\bV$. For instance, $\bM(\epsilon)$ will have
eigenvalues of order $0,\epsilon^2,\epsilon^4,\ldots$ and its limiting
eigenvectors are given by Gram-Schmidt applied to $\bS$.

If we now apply the expansion to the conditional mean (eq.
(\ref{eq:conditional-gen})), we find:
\[ \mm_x = \bv_x^t \Ds \bW \Ds \bS + \ldots\]
Assuming $\bS$ has full rank, we may write $\bv_x^t = \bm{\nu}_x^t \bS $. Eq.
(\ref{eq:conditional-gen}) takes the following form
\begin{equation}
  \label{eq:conditional-modified}
  \E(f(x)|\by) = \bm{\nu}_x^t \bS \Ds \bW \Ds \bS^t ( \bS \Ds \bW \Ds \bS^t+\sigma^2\bI)^{-1} \by + \ldots
\end{equation}
where we recognise a regularised inverse (smoother matrix). This generalises the
smoother matrices we have already studied, with $\bS$ generalising $\bV$. By
scaling the kernel matrix appropriately, we recover all the usual results. In
the flat limit of inverse problems, the GP model becomes equivalent to the usual
parametric or semi-parametric models. See section \ref{sec:non-gaussian} for a
detailed example.

\subsection{Extension to non-Gaussian likelihoods}
\label{sub:non-gaussian}
Although the results are entirely analoguous, going beyond Gaussian likelihoods
requires a very substantial change to the proof techniques. In the Gaussian
case, the posterior mean of $f(x)$ given the data $\by$ is a linear function of
$\by$. This allows us to rely on linear algebra for the proofs, but if we now
wish to generalise to non-Gaussian likelihoods this avenue is no longer open. 

Instead, what we may do is directly express posterior expectations as integrals
and use asymptotic formulae for these integrals. In the Gaussian case, we relied
on a reduction (in section \ref{sec:prediction-equivalence}) that let us study
smoothing matrices only. Here we rely on a more general result, in the form of a
Bayesian variant of the representer theorem, that lets us focus on the finite
dimensional posterior for the values of $f$ at the measured location. That
posterior can itself be tackled using asymptotic integrals, as we show.

We give only the barest sketch here, with apologies to the reader. There are
many tedious details to be worked out, in particular in specifying the exact
class of likelihood functions for which the results hold (some form of
regularity is required).

\subsubsection{Assumptions}

An example of a non-Gaussian likelihood is the Bernoulli likelihood introduced
in section \ref{sec:non-gaussian}. Each datapoint $y_i$ is a class label in
$\{0,1\}$ and we assume
\begin{equation}
  \label{eq:probit-model}
  p(y_i = 1 | f(\bx_i)) = \Phi(f(\bx_i))
\end{equation}
where $f$ is our Gaussian
process. If $f(\bx) > 0$ then points at $\bx$ are more likely to be in class
$1$, if $f(\bx) < 0$ it's the opposite and the separating surface is at the
level set $f(\bx) = 0$.

Again we have $\X = \{x_1 \ldots x_n\}$ the measurement locations and $\fX =
[f(x_1) \ldots f(x_n)]$ the vector of function values. The goal of GP
classification is to form predictions at an arbitrary location, from the
posterior predictive distribution $p(f(x) \vert \by)$. To perform prediction, we
compute the posterior expectation of $f(x)$ given the data $\by$, and if that
expectation is positive we predict class $1$. Contrary to the Gaussian case,
computing these expectations cannot be done analytically.

We can assume a model in a more general form, where the measurements $y_i$ at
each location $x_i$ only depend on the value of $f$ at $x_i$, and the
measurements are independent, i.e.:
\begin{equation}
  \label{eq:likelihood}
  p(\by \vert f ) = \prod_i l_i(y_i\vert f(x_i)) = \Ly(\fX)
\end{equation}
We can also extend this to general linear observations on $f$, as in the
previous section, but that requires burdensome notation so we stick to pointwise
evaluation for simplicity.

The gist of the proof is as follows: first, we reduce the problem from looking
at the posterior predictive at $f(x)$ for any $x$ to just looking at the
finite-dimensional posterior $p(\fX\vert\by)$. Next, we compute asymptotics of
the moments of $p(\fX\vert\by)$ essentially by doing multivariate integrals
in the right basis.

\subsubsection{The Bayesian representer theorem}
\label{sec:bayesian-representer}

The paper \cite{csato2002} contains a result which can be seen as 
a Bayesian counterpart of the representer theorem; and indeed you can use it
to prove the classical representer theorem. The representer theorem shows that
a functional (i.e., infinite-dimensional) optimisation problem actually
collapses to a finite-dimensional optimisation problem. The Bayesian representer
theorem shows something similar for the posterior expectation of $f(x)$ given
the data, in a very general setting. 

Consider $\hat{f}(x) = \E(f(x) \vert \by)$ as a function of $x$. We know that
$f$ belongs to the RKHS generated by the kernel, but can we be more specific?
The results in \cite{csato2002} show that we can:
\begin{equation}
  \label{eq:bayesian-rep}
  E(f(x) \vert \by) = \bk(x,\X)\bK^{-1}\E(\fX\vert\by) = \sum_{i=1}^n k(x,x_i) \alpha_i(\by) 
\end{equation}
so that $\hat{f}$ actually belongs to an $n$ dimensional subspace spanned by the
kernel functions $\{ k(x,x_1) \dots k(x,x_n)\}$. The kinship with the classical
representer theorem should be obvious. 

Importantly, by eq. (\ref{eq:bayesian-rep}), we can think of the posterior
expectation at $x$ as an interpolation at $x$ from observations at $\X$ with
value $\E(\fX\vert\by)$ (to see that it is an interpolation, consider $x \in
\X$).
In addition, a similar result holds for conditional variance:
\begin{equation}
  \label{eq:bayesian-rep-var}
  Var(f(x) \vert \by) = \bk(x,\X)\bK^{-1}\Var(\fX\vert\by)
\end{equation}
which again is an interpolation of the posterior variance at the measured
locations. 

Since the flat limit of interpolation is well-characterised, we can focus on the
behaviour of $\E(\fX\vert\by)$.

\subsubsection{Asymptotic integrals}
\label{sec:asymptotic-int}

We would like to find asymptotic formulas for posterior expectations in the flat
limit, of the form
\begin{equation}
  \label{eq:post-exp-eps}
  \E(\fX \vert\by) = \frac{\int \ff \Ly(\ff) \exp(- \frac{1}{2}\ff^t (\gamma_0\epsilon^{-p}\bKe)^{-1} \ff) d\ff}{\int \Ly(\ff) \exp(- \frac{1}{2}\ff^t (\gamma_0\epsilon^{-p}\bKe)^{-1} \ff) d\ff}
\end{equation}

Recall that the role of the $\gamma_0\epsilon^{-p}$ scaling (with $p \leq 0$) is
to get a nontrivial limit. To look at what happens to eq.
(\ref{eq:post-exp-eps}) in $\flatlim$, we will
perform a linear change of variable, after which some dimensions will drop out
and others will simplify.

\subsubsection{Some asymptotic formulas}
\label{sec:asympt-formulas}

Consider the following (univariate) integral:
\begin{equation}
  \label{eq:tilted-gaussian}
  I(\epsilon) = \int s(x) \exp(-\frac{\epsilon^\beta}{2} x^2) dx
\end{equation}
which we seek to evaluate in $\flatlim$ for different values of the exponent
$\beta$. 

If $\beta > 0$, and $s(x)$ is integrable, then
\begin{equation}
  \label{eq:tilted-positive}
  I(\epsilon) = \int s(x) dx + \O(\epsilon^\beta)
\end{equation}
If $\beta < 0$, and $s(x)$ is differentiable at $0$, then
\begin{equation}
  \label{eq:tilted-negative}
  I(\epsilon) = \sqrt{2\epsilon^{-\beta}}(f(0) + \O(\epsilon^{-\beta}))
\end{equation}
which can be obtained from Laplace's method or just by thinking of
\ref{eq:tilted-gaussian} as an expectation under an (unnormalised) Gaussian and
Taylor expanding $f$ at 0.

We can combine these formulas in multivariate integrals, such as
\begin{align}
  J(\epsilon) &= \int f(x,y,z) \exp(-\frac{1}{2}(\epsilon x^2 + y^2 +
                \frac{z^2}{\epsilon}) )dx dy dz \\
              &\approx \int
                \sqrt{2\epsilon^{-\beta}}(f(x,y,0)\exp(-\frac{1}{2}(\epsilon x^2 + y^2 ))dx dy
              &\approx
                \sqrt{2\epsilon^{-\beta}}(f(x,y,0)\exp(-\frac{1}{2}( y^2 ))dx dy
\end{align}

Assume that $f(x,y,z) \leq 0$ represents a likelihood function, and that
$f(x,y,z) \exp(-\frac{1}{2}(\epsilon x^2 + y^2 + \frac{z^2}{\epsilon}) )dx dy
dz$ is a posterior density. We can compute the expectation of $x,y,z$ via a
modification of $J(\epsilon)$, similar to eq. (\ref{eq:post-exp-eps}). If we
apply the same process of asymptotic simplification and take the ratio, we'll
see that as $\flatlim$, the expectation is the same as if we had (a) clamped $z$
at 0 and (b) treated $x$ as unpenalised. That is in a sense, all that is going on in the
flat limit: under the right change of basis, some dimensions are clamped at
0, some are unpenalised, and the remaining still have some penalisation by the
prior.

\subsection{Putting it all together}
\label{sec:all-together}

Let us derive the right change of basis so that the prior in eq.
(\ref{eq:post-exp-eps}) can be treated as a product of Gaussians with variances
in different orders in $\epsilon$.

For now, consider the $d=1$, smooth case. It will come as no surprise that the
correct change of basis is to go to the orthonormal
polynomials, so that $\ff = \bQ \bg$ where $\bV = \bQ \bR$ is the QR
decomposition of the full Vandermonde matrix (degree $n-1$). Note that the
change-of-variable has determinant one (it's a rotation). We also note $\bg =
[g_0 \ldots g_{n-1}]$, starting the indexing at 0 because of the association
with monomial degrees. 

As usual we can write $\bKe$ as
\begin{equation}
  \label{eq:expansion-kernel}
  \bKe = \bV \Ds \bW \Ds \bV^t + \ldots
\end{equation}
where $\Ds$ is diagonal with $\Ds_{i,i}=\epsilon^i$.
A quadratic form in $\bK^{-1}$ can be expanded as: 
\begin{equation}
  \label{eq:quadform}
  \ff^t \bKe^{-1} \ff = \bg \bQ^t \bV^{-t} \Ds^{-1} \bW^{-1}\Ds^{-1} \bV^{-1} \bQ \bg + \ldots = \bg \bR^{-t} \Ds^{-1} \bW^{-1}\Ds^{-1} \bR^{-1} \bg + \ldots
\end{equation}
The matrix $\bR^{-1}$ is lower-triangular, and so with some algebra you can
convince yourself that in the quadratic form 
\[ \bg \bQ^t \bV^{-t} \Ds^{-1} \bW^{-1}\Ds^{-1} \bV^{-1} \bQ \bg \]
the term in $g_0^2$ has valuation $0$ in $\epsilon$, the term in $g_1^2$ has
valuation $-2$, the term in $g_2^2$ has valuation $-4$, etc. This means we can rewrite the prior as effectively proportional to:
\[ \exp(-\frac{\gamma_0 \epsilon^{p}}{2} (c_0g_0^2 + \epsilon^{-2} c_1g_1^2 +
  \ldots  ))\]

At this stage we can apply our asymptotic integral formulas and we are done.
Depending on $p$ some dimensions become unpenalised, some are clamped to 0 and
the one that (potentially) remains is penalised. We have a semiparametric prior,
the exact nature of which depends on the expansion of the kernel matrix at 0. To
identify this model, we repeat the arguments of the linear-Gaussian case. 

\subsection{Non-analytic kernels}
\label{sec:non-analytic-kernels}

As it stands our results are limited to kernels that are analytic at 0. This
limitation is frustrating because, e.g. not all Matérn kernels are analytic. In
particular, numerical evidence suggests that thin-plate splines
\cite{wood2006generalized} do appear in the flat limit of Matérn kernel, but
this is slightly beyond what we can prove using the current tools. In this
section we explain the problem and sketch a direction for the proof.

To explain the issue, let us consider the Matérn kernel in dimension 1, as
defined in \cite{stein1999interpolation}, eq. 14. Let $k(x,y) = \psi(x-y)$, with \begin{equation}
  \label{eq:matern-1d}
  \psi(t) = \frac{\sqrt{\pi}}{2^{\nu-1}\Gamma(\nu+\frac{1}{2})}|t|^{\nu}\mathcal{K}_\nu(|t|)
\end{equation}
where $\mathcal{K}_\nu$ is a modified Bessel function. Recall that $\nu$ determines
the m.s. differentiability of the process: a sample from a GP with Matérn
covariance is $m$ times differentiable if and only if $\nu > m$. Using the tools
in this article, we can only handle the cases where $\nu$ is half-integer,
because in these cases $\psi(t)=\exp(-|t|)p(t)$ where $p(t)$ is a polynomial
\cite{stein1999interpolation}.

Stein \cite{stein1999interpolation} gives asymptotic series for the Matérn
kernel (eqs. 15 and 16, p. 32) which shed light on the behaviour of $\psi(t)$ in
small $t$. For simplicity, we only consider $\nu \leq 1$.
If $0 < \nu < 1$, then $\psi$ has an expansion of the form:
\[ \psi(t) = b_0 - b_1 t^{2\nu} + \O(|t|^2) \] where $b_0$ and $b_1$ are
coefficients depending on $\nu$. Note that $2\nu$ is not an integer, so $\psi$
is not asymptotic to a power series.
If $\nu = 1$, then $\psi$ has expansion:
\[ \psi(t) = 2 + t^{2}\log|t| + \O(t^2) \] 
where $t^{2}\log|t|$ is not a monomial. 

In terms of kernel matrices, this implies that if $0<\nu<1$, $\bKe$ can be
expanded as:
\[ \bKe = b_0 \ones\ones^t - \varepsilon^{2\nu} b_1 \bD^{(2\nu)} + \O(\varepsilon^2) \]
where $\bD^{(2\nu)} = [|x_i-x_j|^{2\nu}]_{i,j}$. If $\nu = \frac{1}{2}$, we
recover the special case of the exponential kernel. If $\nu$ is any other real
number, we need to deal with a non-analytic matrix perturbation. This creates a
problem because we use Rellich's theorem, which assumes that the perturbation is
analytic. The same problem arises when $\nu=1$, in which case:
\[ \bKe = 2 \ones\ones^t + (\varepsilon^{2}\log \varepsilon)  \bL + \O(\varepsilon^2) \]
where $\bL = [|x_i-x_j|^{2}\log |x_i-x_j|]_{i,j}$. Again we cannot use Rellich's
theorem. 

It is relatively easy to see what should happen here: the scaling of $\gamma$
needs to be adapted to scale as $\varepsilon^{-2\nu}$ or
$\frac{1}{\varepsilon^{2}\log \varepsilon}$ when $\nu=1$. What we need is a
version of the results in \cite{BarthelmeUsevich:KernelsFlatLimit} that does not
rely on Rellich's theorem. There is a possibility of obtaining the same results
using linear algebra in the field of transseries \cite{van2006transseries}, and
we hope to pursue this in the future.

\bibliographystyle{plainnat}
\bibliography{gps}

\begin{thebibliography}{53}
\providecommand{\natexlab}[1]{#1}
\providecommand{\url}[1]{\texttt{#1}}
\expandafter\ifx\csname urlstyle\endcsname\relax
  \providecommand{\doi}[1]{doi: #1}\else
  \providecommand{\doi}{doi: \begingroup \urlstyle{rm}\Url}\fi

\bibitem[Akaike(1974)]{akaike1974new}
Hirotugu Akaike.
\newblock A new look at the statistical model identification.
\newblock \emph{IEEE transactions on automatic control}, 19\penalty0
  (6):\penalty0 716--723, 1974.

\bibitem[Akian et~al.(2016)Akian, Bapat, and Gaubert]{akian2016non}
Marianne Akian, Ravindra Bapat, and St{\'e}phane Gaubert.
\newblock Non-archimedean valuations of eigenvalues of matrix polynomials.
\newblock \emph{Linear Algebra and its Applications}, 498:\penalty0 592--627,
  2016.

\bibitem[Avrachenkov et~al.(2013)Avrachenkov, Filar, and
  Howlett]{avrachenkov2013analytic}
Konstantin~E Avrachenkov, Jerzy~A Filar, and Phil~G Howlett.
\newblock \emph{Analytic perturbation theory and its applications}, volume 135.
\newblock SIAM, 2013.

\bibitem[Barthelm{\'e} and Usevich(2021)]{BarthelmeUsevich:KernelsFlatLimit}
Simon Barthelm{\'e} and Konstantin Usevich.
\newblock Spectral properties of kernel matrices in the flat limit.
\newblock \emph{SIAM Journal on Matrix Analysis and Applications}, 42\penalty0
  (1):\penalty0 17--57, 2021.

\bibitem[Barthelm{\'e} et~al.(2023)Barthelm{\'e}, Tremblay, Usevich, and
  Amblard]{barthelme2023determinantal}
Simon Barthelm{\'e}, Nicolas Tremblay, Konstantin Usevich, and Pierre-Olivier
  Amblard.
\newblock Determinantal point processes in the flat limit.
\newblock \emph{Bernoulli}, 29\penalty0 (2):\penalty0 957--983, 2023.

\bibitem[Berlinet and Thomas-Agnan(2004)]{BerlTA2004}
A.~Berlinet and C.~Thomas-Agnan.
\newblock \emph{Reproducing Kernel Hilbert Spaces in Probability and
  Statistics}.
\newblock Kluwer Academic Publishers, 2004.

\bibitem[Buja et~al.(1989)Buja, Hastie, and Tibshirani]{buja1989linear}
Andreas Buja, Trevor Hastie, and Robert Tibshirani.
\newblock Linear smoothers and additive models.
\newblock \emph{The Annals of Statistics}, pages 453--510, 1989.

\bibitem[Cockayne et~al.(2019)Cockayne, Oates, Sullivan, and
  Girolami]{cockayne2019bayesian}
Jon Cockayne, Chris~J Oates, Timothy~John Sullivan, and Mark Girolami.
\newblock Bayesian probabilistic numerical methods.
\newblock \emph{SIAM review}, 61\penalty0 (4):\penalty0 756--789, 2019.

\bibitem[Csato and Opper(2002)]{csato2002}
Lehel Csato and Manfred Opper.
\newblock Sparse online gaussian processes.
\newblock \emph{Neural computation}, 14\penalty0 (3):\penalty0 641--668, 2002.

\bibitem[Dehaene and Barthelm{\'e}(2018)]{dehaene2018expectation}
Guillaume Dehaene and Simon Barthelm{\'e}.
\newblock Expectation propagation in the large data limit.
\newblock \emph{Journal of the Royal Statistical Society: Series B (Statistical
  Methodology)}, 80\penalty0 (1):\penalty0 199--217, 2018.

\bibitem[Driscoll and Fornberg(2002)]{driscoll2002interpolation}
Tobin~A Driscoll and Bengt Fornberg.
\newblock Interpolation in the limit of increasingly flat radial basis
  functions.
\newblock \emph{Computers \& Mathematics with Applications}, 43\penalty0
  (3-5):\penalty0 413--422, 2002.

\bibitem[Duchon(1977)]{duchon1977splines}
Jean Duchon.
\newblock Splines minimizing rotation-invariant semi-norms in sobolev spaces.
\newblock In \emph{Constructive theory of functions of several variables},
  pages 85--100. Springer, 1977.

\bibitem[Fanuel et~al.(2020)Fanuel, Schreurs, and
  Suykens]{fanuel2020determinantal}
Micha{\"e}l Fanuel, Joachim Schreurs, and Johan~AK Suykens.
\newblock Determinantal point processes implicitly regularize semi-parametric
  regression problems.
\newblock \emph{arXiv preprint arXiv:2011.06964}, 2020.

\bibitem[Fornberg and Piret(2008)]{fornberg2008stable}
Bengt Fornberg and C{\'e}cile Piret.
\newblock A stable algorithm for flat radial basis functions on a sphere.
\newblock \emph{SIAM Journal on Scientific Computing}, 30\penalty0
  (1):\penalty0 60--80, 2008.

\bibitem[Fornberg et~al.(2011)Fornberg, Larsson, and Flyer]{fornberg2011stable}
Bengt Fornberg, Elisabeth Larsson, and Natasha Flyer.
\newblock Stable computations with {Gaussian} radial basis functions.
\newblock \emph{SIAM Journal on Scientific Computing}, 33\penalty0
  (2):\penalty0 869--892, 2011.

\bibitem[Friedman et~al.(2001)Friedman, Hastie, and
  Tibshirani]{friedman2001elements}
Jerome Friedman, Trevor Hastie, and Robert Tibshirani.
\newblock \emph{The elements of statistical learning}.
\newblock Springer, 2001.

\bibitem[Golub et~al.(1979)Golub, Heath, and Wahba]{golub1979generalized}
Gene~H Golub, Michael Heath, and Grace Wahba.
\newblock Generalized cross-validation as a method for choosing a good ridge
  parameter.
\newblock \emph{Technometrics}, 21\penalty0 (2):\penalty0 215--223, 1979.

\bibitem[Gu(1992)]{Gu1992}
Chong Gu.
\newblock Penalized likelihood regression: a bayesian analysis.
\newblock \emph{Statistica Sinica}, 2:\penalty0 255--264, 1992.

\bibitem[Helton et~al.(2008)Helton, Jean-Bernard, and
  Putinar]{HeltonLasserrePutinar2008}
J.~William Helton, Lasserre Jean-Bernard, and Mihai Putinar.
\newblock Measures with zeros in the inverse of their moment matrix.
\newblock \emph{The Annals of Probability}, 36\penalty0 (4):\penalty0
  1453--1471, 2008.

\bibitem[Jones(2001)]{jones2001taxonomy}
Donald~R Jones.
\newblock A taxonomy of global optimization methods based on response surfaces.
\newblock \emph{Journal of global optimization}, 21\penalty0 (4):\penalty0
  345--383, 2001.

\bibitem[Kotz and Nadarajah(2004)]{kotz2004multivariate}
Samuel Kotz and Saralees Nadarajah.
\newblock \emph{Multivariate t-distributions and their applications}.
\newblock Cambridge University Press, 2004.

\bibitem[Larsson and Fornberg(2005)]{larsson2005theoretical}
Elisabeth Larsson and Bengt Fornberg.
\newblock Theoretical and computational aspects of multivariate interpolation
  with increasingly flat radial basis functions.
\newblock \emph{Computers \& Mathematics with Applications}, 49\penalty0
  (1):\penalty0 103--130, 2005.

\bibitem[Li(1985)]{li1985stein}
Ker-Chau Li.
\newblock From stein's unbiased risk estimates to the method of generalized
  cross validation.
\newblock \emph{The Annals of Statistics}, pages 1352--1377, 1985.

\bibitem[Lindgren et~al.(2011)Lindgren, Rue, and
  Lindstr{\"o}m]{lindgren2011explicit}
Finn Lindgren, H{\aa}vard Rue, and Johan Lindstr{\"o}m.
\newblock An explicit link between gaussian fields and gaussian markov random
  fields: the stochastic partial differential equation approach.
\newblock \emph{Journal of the Royal Statistical Society: Series B (Statistical
  Methodology)}, 73\penalty0 (4):\penalty0 423--498, 2011.

\bibitem[MacKay(1999)]{mackay1999comparison}
David~JC MacKay.
\newblock Comparison of approximate methods for handling hyperparameters.
\newblock \emph{Neural computation}, 11\penalty0 (5):\penalty0 1035--1068,
  1999.

\bibitem[Matheron(1969)]{matheron1969krigeage}
Georges Matheron.
\newblock \emph{Le krigeage universel}, volume~1.
\newblock {\'E}cole nationale sup{\'e}rieure des mines de Paris Paris, 1969.

\bibitem[Meinguet(1979)]{meinguet1979multivariate}
Jean Meinguet.
\newblock Multivariate interpolation at arbitrary points made simple.
\newblock \emph{Zeitschrift f{\"u}r angewandte Mathematik und Physik ZAMP},
  30\penalty0 (2):\penalty0 292--304, 1979.

\bibitem[Micchelli(1986)]{micchelli1986interpolation}
Charles~A Micchelli.
\newblock Interpolation of scattered data: Distance matrices and conditionally
  positive definite functions.
\newblock \emph{Constructive Approximation}, 2\penalty0 (1):\penalty0 11--22,
  1986.

\bibitem[Minka(2001)]{minka2001family}
Thomas~Peter Minka.
\newblock \emph{A family of algorithms for approximate Bayesian inference}.
\newblock PhD thesis, Massachusetts Institute of Technology, 2001.

\bibitem[Moro and Dopico(2002)]{moro2002first}
Julio Moro and Froil{\'a}n~M Dopico.
\newblock First order eigenvalue perturbation theory and the newton diagram.
\newblock In \emph{Applied Mathematics and Scientific Computing}, pages
  143--175. Springer, 2002.

\bibitem[Murray et~al.(2008)Murray, MacKay, and Adams]{murray2008gaussian}
Iain Murray, David MacKay, and Ryan~P Adams.
\newblock The gaussian process density sampler.
\newblock \emph{Advances in neural information processing systems}, 21, 2008.

\bibitem[O'sullivan et~al.(1986)O'sullivan, Yandell, and
  Raynor~Jr]{o1986automatic}
Finbarr O'sullivan, Brian~S Yandell, and William~J Raynor~Jr.
\newblock Automatic smoothing of regression functions in generalized linear
  models.
\newblock \emph{Journal of the American Statistical Association}, 81\penalty0
  (393):\penalty0 96--103, 1986.

\bibitem[Reinsch(1967)]{reinsch1967smoothing}
Christian~H Reinsch.
\newblock Smoothing by spline functions.
\newblock \emph{Numerische mathematik}, 10\penalty0 (3):\penalty0 177--183,
  1967.

\bibitem[Robbins et~al.(1956)]{robbins1956empirical}
Herbert Robbins et~al.
\newblock An empirical bayes approach to statistics.
\newblock In \emph{Proceedings of the Third Berkeley Symposium on Mathematical
  Statistics and Probability, Volume 1: Contributions to the Theory of
  Statistics}. The Regents of the University of California, 1956.

\bibitem[Rousseau(2016)]{rousseau2016frequentist}
Judith Rousseau.
\newblock On the frequentist properties of bayesian nonparametric methods.
\newblock \emph{Annual Review of Statistics and Its Application}, 3:\penalty0
  211--231, 2016.

\bibitem[Rue and Held(2005)]{rue2005gaussian}
Havard Rue and Leonhard Held.
\newblock \emph{Gaussian Markov random fields: theory and applications}.
\newblock CRC press, 2005.

\bibitem[Ruppert et~al.(2003)Ruppert, Wand, and
  Carroll]{ruppert2003semiparametric}
David Ruppert, Matt~P Wand, and Raymond~J Carroll.
\newblock \emph{Semiparametric regression}.
\newblock Cambridge university press, 2003.

\bibitem[S{\"a}rkk{\"a}(2011)]{sarkka2011linear}
Simo S{\"a}rkk{\"a}.
\newblock Linear operators and stochastic partial differential equations in
  gaussian process regression.
\newblock In \emph{International Conference on Artificial Neural Networks},
  pages 151--158. Springer, 2011.

\bibitem[Sauer(2006)]{sauer2006polynomial}
Tomas Sauer.
\newblock Polynomial interpolation in several variables: lattices, differences,
  and ideals.
\newblock \emph{Studies in Computational Mathematics}, 12:\penalty0 191--230,
  2006.

\bibitem[Schaback(2005)]{schaback2005multivariate}
Robert Schaback.
\newblock Multivariate interpolation by polynomials and radial basis functions.
\newblock \emph{Constructive Approximation}, 21\penalty0 (3):\penalty0
  293--317, 2005.

\bibitem[Scheuerer et~al.(2013)Scheuerer, Schaback, and
  Schlather]{scheuerer2013interpolation}
Michael Scheuerer, Robert Schaback, and Martin Schlather.
\newblock Interpolation of spatial data--a stochastic or a deterministic
  problem?
\newblock \emph{European Journal of Applied Mathematics}, 24\penalty0
  (4):\penalty0 601--629, 2013.

\bibitem[Sch{\"o}lkopf et~al.(2002)Sch{\"o}lkopf, Smola, Bach,
  et~al.]{scholkopf2002learning}
Bernhard Sch{\"o}lkopf, Alexander~J Smola, Francis Bach, et~al.
\newblock \emph{Learning with kernels: support vector machines, regularization,
  optimization, and beyond}.
\newblock MIT press, 2002.

\bibitem[Sollich and Williams(2004)]{sollich2004using}
Peter Sollich and Christopher~KI Williams.
\newblock Using the equivalent kernel to understand gaussian process
  regression.
\newblock In \emph{NIPS}, volume~17, pages 1313--1320, 2004.

\bibitem[Song et~al.(2012)Song, Riddle, Fasshauer, and
  Hickernell]{song2012multivariate}
Guohui Song, John Riddle, Gregory~E Fasshauer, and Fred~J Hickernell.
\newblock Multivariate interpolation with increasingly flat radial basis
  functions of finite smoothness.
\newblock \emph{Advances in Computational Mathematics}, 36\penalty0
  (3):\penalty0 485--501, 2012.

\bibitem[Stein(1999)]{stein1999interpolation}
Michael~L Stein.
\newblock \emph{Interpolation of Spatial Data: Some Theory for Kriging}.
\newblock Springer, 1999.

\bibitem[Teckentrup(2020)]{teckentrup2020convergence}
Aretha~L Teckentrup.
\newblock Convergence of gaussian process regression with estimated
  hyper-parameters and applications in bayesian inverse problems.
\newblock \emph{SIAM/ASA Journal on Uncertainty Quantification}, 8\penalty0
  (4):\penalty0 1310--1337, 2020.

\bibitem[Tremblay et~al.(2021)Tremblay, Barthelm{\'e}, Usevich, and
  Amblard]{tremblay2021extended}
Nicolas Tremblay, Simon Barthelm{\'e}, Konstantin Usevich, and Pierre-Olivier
  Amblard.
\newblock Extended l-ensembles: a new representation for determinantal point
  processes.
\newblock \emph{arXiv preprint arXiv:2107.06345}, 2021.

\bibitem[Van~der Hoeven(2006)]{van2006transseries}
Joris Van~der Hoeven.
\newblock \emph{Transseries and real differential algebra}, volume 1888.
\newblock Springer, 2006.

\bibitem[van~der Vaart and van Zanten(2008)]{van2008rates}
Aad~W van~der Vaart and J~Harry van Zanten.
\newblock Rates of contraction of posterior distributions based on gaussian
  process priors.
\newblock \emph{The Annals of Statistics}, 36\penalty0 (3):\penalty0
  1435--1463, 2008.

\bibitem[Wahba(1990)]{wahba1990spline}
Grace Wahba.
\newblock \emph{Spline models for observational data}.
\newblock SIAM, 1990.

\bibitem[Wendland(2004)]{wendland2004scattered}
Holger Wendland.
\newblock \emph{Scattered data approximation}, volume~17.
\newblock Cambridge university press, 2004.

\bibitem[Williams and Rasmussen(2006)]{williams2006gaussian}
Christopher~KI Williams and Carl~Edward Rasmussen.
\newblock \emph{Gaussian processes for machine learning}.
\newblock MIT press Cambridge, MA, 2006.

\bibitem[Wood(2006)]{wood2006generalized}
Simon~N Wood.
\newblock \emph{Generalized additive models: an introduction with R}.
\newblock chapman and hall/CRC, 2006.

\end{thebibliography}

\end{document}